\renewcommand{\vec}{\mathbf}
\newcommand{\diag}{\operatorname{diag\,}}
\newcommand{\MCov}{M_\mathrm{Cov}}
\newcommand{\Tau}{\mathcal{T}}
\newcommand{\Kplus}{K^+}
\newcommand{\Kminus}{K^-}
\title[Anisotropic a posteriori error estimate for VEM]{Anisotropic a posteriori error estimate for the Virtual Element Method}
 \author{P.~F.~Antonietti}
  \author{S.~Berrone}
  \author{A.~Borio}
  \author{A.~D'Auria}
  \author{M.~Verani}
  \author{S.~Weisser}
  \address[P.F. Antonietti]{
    MOX, Dipartimento di Matematica,
    Politecnico di Milano, Italy;
    \emph{e-mail: paola.antonietti@polimi.it}
  }
  \address[S. Berrone]{Dipartimento di Scienze Matematiche, Politecnico di Torino, Italy; \emph{e-mail:stefano.berrone@polito.it}}
  \address[A. Borio]{Dipartimento di Scienze Matematiche, Politecnico di Torino, Italy; \emph{e-mail:andrea.borio@polito.it}}
  \address[A. D'Auria]{Dipartimento di Scienze Matematiche, Politecnico di Torino, Italy; \emph{e-mail:alessandro.dauria@polito.it}}
  \address[M. Verani]{
    MOX, Dipartimento di Matematica,
    Politecnico di Milano, Italy;
    \emph{e-mail: marco.verani@polimi.it}
  }
\address[S. Weisser]{FR Mathematik,
Universit\"at des Saarlandes, Germany;
\emph{e-mail:weisser@num.uni-sb.de}
}
\theoremstyle{plain}
\newtheorem{theorem}{Theorem}[section]
\newtheorem{lemma}[theorem]{Lemma}
\newtheorem{corollary}[theorem]{Corollary}
\newtheorem{proposition}[theorem]{Proposition}
\theoremstyle{definition}
\newtheorem{definition}[theorem]{Definition}
\newtheorem{remark}[theorem]{Remark}
\theoremstyle{plain}
\newtheorem{assumption}[theorem]{Assumption}
\definecolor{MyDarkGreen}{rgb}{0,0.45,0}
\def\trait #1 #2 #3 {\vrule width #1pt height #2pt depth #3pt}
\def\fin{\hfill
        \trait .3 5 0
        \trait 5 .3 0
        \kern-5pt
        \trait 5 5 -4.7
        \trait 0.3 5 0
\medskip}
\newcommand{\REAL}{\mathbbm{R}}
\newcommand{\xv}{\mathbf{x}}
\newcommand{\xs}{x}
\newcommand{\calM}{\mathcal{M}}
\newcommand{\LTWO}  {L^2}
\newcommand{\HS}[1] {H^{#1}}
\newcommand{\PS}[1] {\mathbbm{P}_{#1}}
\newcommand{\hh}{h}
\newcommand{\DIM} {d}              % space dimension
\newcommand{\nlen}{\hspace{-0.2mm}}
\newcommand{\snorm}  [2]{|#1|_{#2}}
\newcommand{\norm}   [2]{|\nlen|#1|\nlen|_{#2}}
\newcommand{\abs}    [1]{|#1|}
\newcommand{\EOD}{

\begin{document}

\maketitle

\begin{abstract}
{We derive an anisotropic a posteriori error estimate for the adaptive conforming Virtual Element approximation of a paradigmatic two-dimensional elliptic problem. In particular, we introduce a quasi-interpolant operator and exploit its approximation results to prove the reliability of the error indicator. We design and implement the corresponding  adaptive polygonal anisotropic algorithm.  Several numerical tests assess the superiority of the proposed algorithm in comparison with standard polygonal isotropic mesh refinement schemes. }

\vspace{0.2cm}
\noindent Keywords: Virtual Element Method, anisotropy, a posteriori error analysis.
\end{abstract}

\section{Introduction and Notation}
In recent years, the numerical approximation of partial differential equations on computational meshes composed by arbitrarily-shaped
polygonal/polyhedral (polytopal, for short) elements has been 
the subject of an intense research activity.
Examples of polytopal element methods (POEMs) include
the Mimetic Finite Difference method,
%%\cite{%HymanShashkovSteinberg_1997,%
%  BreLipSha05,%
%  Gyrya-Lipnikov-Manzini:2016,%
%  Brezzi-Buffa-Lipnikov:2009,%
%  BeiraodaVeiga-Lipnikov-Manzini:2011,%
%  Lipnikov-Manzini-Moulton-Shashkov:2016,%
%  Lipnikov-Manzini:2014,%
%  BeiraoManziniLipnikov_2014,%
%  Lipnikov-Manzini-Shashkov:2014,%
%  Manzini-Lipnikov-Moulton-Shashkov:2017,%
%  Lipnikov-Manzini-Moulton-Shashkov:2016,%
%  AntoniettiBigoniVerani_2013,%
%  AntoniettiFormaggiaScottiVeraniVerzotti_2016}, 
%%
the Polygonal Finite Element Method,
%\cite{SukumarTabarraei_2004,%
%  SukumarTabarraei_2008,%
%  Manzini-Russo-Sukumar:2014}, 
%%
the Polygonal Discontinuous Galerkin Finite Element Method,
%\cite{AntBreMar09,dg_cfes_2012,%
%  BasBotColSub,%
%  Canetal14,%
%  CangianiDongGeorgoulisHouston_2016,%
%  AntoniettiHoustonet_al_2016,%
%  AntoniettiFacciolaRussoVerani_2016,%
%  CangianiDongGeorgoulis_2017,%
%  AntoniettiPennesi_2017,%
%  AntoniettiPennesiHouston_2018,%
%  DiPietro-Ern:2011,%
%  Brezzi:2000,%
%  Arnold:2001},
%%%
the Hybridizable Discontinuous Galerkin and Hybrid High-Order Methods,
%\cite{Cockburn-Gopalakrishnan-Lazarov:2009,%
%  CockburnDongGuzman_2008,%
%  DiPietroErnLemaire_2014,%
%  CockburnDiPietroErn_2016},
%%
the Gradient Discretization method,
%\cite{EymardGuichardHerbin_2012,%
%  DroniouEymardGallouetHerbin_2013,%
%  DiPietro-Droniou-Manzini:2018}, 
%%%
the Finite Volume Method,
%\cite{Droniou:2014},
%%
the BEM-based FEM,
% \cite{Weisser_basic}
%%
 the Weak Galerkin method 
%(see, e.g., \cite{Wang:2014}) 
 and the {V}irtual {E}lement method (VEM). For more details see the special issue \cite{special-issue} and the references therein. 

The novelty and recent surge of interest in POEMs stems from their ability to describe a physical domain using not only standard shapes (triangles, tetrahedra, square, hexahedra,...) but also highly irregular and arbitrary geometries.   
This flexibility of essentially arbitrary polytopal meshes is naturally very attractive for designing adaptive algorithms based on mesh refinement (and derefinement/agglomeration) driven by suitable a posteriori error estimates. However, while (isotropic and anisotropic) {error estimates} and a posteriori error estimates and adaptive finite element methods (AFEMs) have been intensively investigated during  the last decades (see, e.g., for the isotropic case the monographs \cite{Verfurth:2013, Nochetto-Veeser:2012} and the references therein and for the anisotropic case \cite{apel:book,FormaggiaPerotto2001,FormaggiaPerotto2003,Georgoulis:phd,Georgoulis2006,Houston:anisDG:2007c,Houston:anisDG:2007d} and the references therein), the corresponding study of a posteriori error estimates and adaptivity for polytopal methods is still in its infancy.  See, for example, \cite{Beirao:2008,BM:2008,ABLV:2013} for the study of a posteriori error estimates in the context of Mimetic Finite Differences, \cite{BM:2015,Berrone-Borio:2017,Cangiani_et_al-apost:2017,Mora-Rivera-Rodriguez:2017,Beirao-Manzini-Mascotto:2019,Chi-Beirao-Paulino:2019,cangiani2019posteriori} for the Virtual Element Method, \cite{Weisser:2011,Weisser:2017,Weisser-Wick:2018,Weisser:book} for polygonal BEM-based FEM, \cite{Botti-et-al:2017} for the polygonal Discontinuous Galerkin method, \cite{DiPietro-Specogna:2016} for the Mixed High Order method, \cite{Mu:2019} for the Weak Galerkin method and \cite{Vohralik-Yousef:2018}  for lowest-order locally conservative methods on polytopal meshes. 
Moreover, despite the great flexibility provided by polytopal meshes, the above works focused on the isotropic case, only. The anisotropic adaptive polytopal mesh refinement, to our knowledge, has been addressed only in \cite{Antonietti-et-al:anis} for the Virtual Element Method in two dimensions. For completeness, see  also the recent work \cite{Chen-anis:2019} for nonconforming VEM {\em a priori} anisotropic error analysis.   
Aim of this paper is to push forward the research of \cite{Antonietti-et-al:anis}  providing a rigorous polygonal anisotropic {\em a posteriori} error estimate for conforming VEM and numerically assessing its efficacy in driving polygonal adaptive anisotropic mesh refinement strategies for the virtual element approximation of a paradigmatic two-dimensional elliptic problem. \\

The outline of the paper is as follows. In Section \ref{sec:VEM} we introduce the continuous elliptic  problem together with its lowest order virtual element approximation. In Section \ref{S:preliminaries} we first make precise the notion of {\it polygonal anisotropy}, then we state the anisotropic mesh regularity  assumptions under which our theoretical results will be obtained. In the same section we also collect a series of instrumental results that will be employed in the subsequent analysis. In Section \ref{S:quasi-ineterp} we introduce a quasi-interpolant operator and prove approximation results that will be employed in Section \ref{S:apos} where a novel  polygonal anisotropic a posteriori error estimate  is obtained. Finally, in Section \ref{sec:numres} we present a set of numerical results assessing the validity of our theoretical error estimates and the capability of our anisotropic error indicators to drive an adaptive polygonal anisotropic mesh refinement strategy for the solution of an elliptic problem.

\subsection{Notation of functional spaces and technical results}
We use the standard definition and notation of Sobolev spaces, norms
and seminorms as given in~\cite{Adams-Fournier:2003}.
Hence, the Sobolev space $\HS{s}(\omega)$ consists of functions
defined on the open bounded connected subset $\omega$ of $\REAL^{2}$
that are square integrable and whose weak derivatives up to order $s$
are square integrable.
As usual, if $s=0$, we prefer the notation $\LTWO(\omega)$.
Norm and seminorm in $\HS{s}(\omega)$ are denoted by
$\norm{\cdot}{s,\omega}$ and $\snorm{\cdot}{s,\omega}$, respectively,
and $(\cdot,\cdot)_{\omega}$ denote the $\LTWO$-inner product.
The subscript $\omega$ may be omitted when $\omega$ is the whole computational
domain $\Omega$. 
%%
%If $\Gamma$ is the boundary of $\Omega$, we denote the space of the
%traces of the functions in $\HONE(\Omega)$, which are compactly
%supported in $\LTWO(\Gamma)$, by the notation
%$H^{\frac{1}{2}}_{c}(\Gamma)$, and a similar notation holds for the
%traces of the element boundary.

If $\ell\geq0$ is an integer number, $\PS{\ell}(\omega)$ is the space
of polynomials of degree up to $\ell$ defined on $\omega$, with the
convention that $\PS{-1}(\omega)=\{0\}$.
The $\LTWO$-orthogonal projection onto the polynomial space
$\PS{\ell}(\omega)$ is denoted by
$\Pi^{0,\omega}_{\ell}\,:\,\LTWO(\omega)\to\PS{\ell}(\omega)$.
The space $\PS{\ell}(\omega)$ is the span of the finite set of
\emph{scaled monomials of degree up to $\ell$}, that are given by
\begin{align}
  \calM_{\ell}(\omega) =
  \bigg\{\,
    \left( \frac{\xv-\bar\xv_{\omega}}{\hh_{\omega}} \right)^{\alpha}
    \textrm{~with~}\abs{\alpha}\leq\ell
    \,\bigg\},
\end{align}
where 
\begin{itemize}
\item $\bar\xv_{\omega}$ denotes the center of gravity of $\omega$ and
  $\hh_{\omega}$ its characteristic length, as, for instance, the edge
  length or the cell diameter for $\DIM=1,2$;
\item $\alpha=(\alpha_1,\alpha_2)$ is the
  two-dimensional multi-index of nonnegative integers $\alpha_i$
  with degree $\abs{\alpha}=\alpha_1+\alpha_{2}\leq\ell$ and
  such that
  $\xv^{\alpha}=\xs_1^{\alpha_1}\xs_{2}^{\alpha_{2}}$ for
  any $\xv\in\REAL^{2}$.
\end{itemize}

{\color{black} Finally, we use the symbols $\lesssim$ and $\gtrsim$ to denote inequalities holding  up to a positive constant that is independent of the 
characteristic length of mesh elements, but may depend on the problem constants, like the
coercivity and continuity constants, or other discretization constants
like the mesh regularity constant, the stability constants, etc.
Accordingly, $a\simeq b$ means $a\lesssim b \lesssim a$.
The hidden constant generally has  a different value at each occurrence.}

\section{Model problem and Virtual Element discretization}
\label{sec:VEM}
Let $\Omega\subset \mathbb{R}^2$ be a bounded polygonal domain. In this paper we are interested in deriving anisotropic error estimates for the virtual element approximation of the following elliptic problem:
\begin{equation}\label{pb}
 -\Delta u = f \quad \text{in~}\Omega,\qquad u=0\quad \text{on~}\partial\Omega
\end{equation}
with $f\in L^2(\Omega)$. The variational formulation of \eqref{pb} reads as: Find $u\in H^1_0(\Omega)$ such that 
\begin{equation}\label{pb:weak}
a(u,v)=F(v)
\end{equation}
for every $v\in H^1_0(\Omega)$ where 
$a(u,v)=\int_\Omega \nabla u \cdot \nabla v ~d\xv$ and $F(v)=\int_\Omega fv ~d\xv$. 

We now briefly recall (see \cite{volley} for more details) the lowest order virtual element approximation to \eqref{pb:weak}. Let $\{\mathcal{K}_h\}_h$ be a sequence of decompositions of $\Omega$ where each mesh $\mathcal{K}_h$ is a collection of nonoverlapping
 polygonal elements $K$ with boundary $\partial K$, and  let $\mathcal{E}_h$ be the set
of edges $E$ of $\mathcal{K}_h$.
Each mesh is labeled by $h$, the diameter of the mesh, defined as
usual by $h=\max_{E\in \mathcal{K}_h} h_{K}$, where
$h_{K}=\sup_{\mathbf{x},\mathbf{y}\in K}\vert\mathbf{x}-\mathbf{y}\vert$. 
We denote the set of vertices ${\sf v}$ in $\mathcal{K}_h$ by $\mathcal{V}_h$. The global lowest order  virtual element space is defined as 
\begin{equation}\label{VEM:global}
 {\color{black} V_{h,0}}=\{v_h\in H^1_0(\Omega):~v_h\vert_K\in V_h^K~\text{and~} v_h({\sf v})=0 ~\forall {\sf v}\in \partial\Omega\}\subset H^1_0(\Omega),
 \end{equation}  
 where 
\begin{equation} \label{VEM:local}
 {\color{black}V_h(K)}=\{v_h\in H^1(K):~\Delta v_h=0 ~\text{in}~ K,~ v_h\vert _E \in \mathbb{P}^1(E)~\forall E\subset \partial K\},
\end{equation}
 is the local virtual element space.
We denote by $u_h\in {\color{black} V_{h,0}} $ the virtual element approximation to the solution $u$ of \eqref{pb:weak}, defined as  the unique solution to 
\begin{equation}
a_h(u_h,v_h)=(f_h,v_h) 
\end{equation}
for every $v_h\in {\color{black} V_{h,0}}$, where 
$f_h$
 is the piecewise constant approximation of $f$ {\color{black} on $\mathcal{K}_h$} and $a_h(u_h,v_h)=\sum_{K\in \mathcal{K}_h} a^K_h(u_h,v_h)$ being
$$a^K_h(u_h,v_h)= \int_K {\color{black}\Pi_0^{0,K}} \nabla(u_h) \cdot {\color{black}\Pi_0^{0,K}}  \nabla(v_h) ~d\xv + S^K((I-{\color{black}\Pi_1^{0,K}} )(u_h),(I-{\color{black}\Pi_1^{0,K}} )(v_h)),$$ 
the local discrete bilinear form that satisfies the usual stability and consistency properties (see \cite{volley} for precise definitions). For {\color{black} $w_h\in \text{Ker}(\Pi_1^{0,K})$} the stabilization form $S^K(\cdot,\cdot)$ is defined as 
$$ S^K(w_h,w_h)=\sum_{i=1}^{n_K} w_h^2({\sf v}_{i,K}),$$
being ${\sf v}_{i,K}$, $i=1,\ldots,n_K$ the vertices of $K$.  For more details about different choices for the stabilization form, see \cite{Beirao-Lovadina-Russo}.

\section{Polygonal Anisotropy and mesh regularity }\label{S:preliminaries}
In this section, following \cite{Weisser:2019}, we first make precise the notions of {\it isotropic} and {\it anisotropic} polygonal element. This will be obtained analysing the spectral decomposition of a suitable matrix ({\color{black} in the sequel named {\it covariance} matrix) } associated to the element. 
More precisely, let $K$ be a polygonal element of the partition  $\mathcal{K}_h$. We denote by  $|K|$ the area of $K$, we define the barycenter of $K$  as
\[
  \bar \xv_K = \frac{1}{|K|}\int_K \xv\,d\xv,
\]
and we introduce the covariance matrix of $K$ as
\begin{equation}\label{eq:covariance}
  \MCov(K) = \frac{1}{|K|}\int_K (\xv-\bar \xv_K)(\xv-\bar \xv_K)^\top\,d\xv \in\mathbb R^{2\times 2}.
\end{equation}
Obviously, $\MCov$ is real valued, symmetric and positive definite, once we assume  that $K$ is not degenerating (i.e. $|K| >0$). Therefore,  $\MCov$ admits an eigenvalue decomposition
\[
  \MCov(K) = U_K\Lambda_KU_K^\top,
\]
with
\begin{equation}\label{eq:eigenvalues}
  U^\top=U^{-1}
  \quad\mbox{and}\quad
  \Lambda_K=\diag(\lambda_{K,1},\lambda_{K,2}),
\end{equation}
where $\lambda_{K,1}\geq\lambda_{K,2}>0$.

The eigenvectors of $\MCov(K)$ give the characteristic directions of $K$.  Consequently, if 
\[
  \MCov(K) = cI
\]
for $c>0$, there are no dominant directions in the element $K$. Thus, we can characterise the anisotropy with the help of the quotient $\lambda_{K,1}/\lambda_{K,2}\geq 1$ and say that an element $K$ is
\begin{align*}
  \text{isotropic, if}\quad & \frac{\lambda_{K,1}}{\lambda_{K,2}} \approx 1, \\
  \text{and anisotropic, if}\quad & \frac{\lambda_{K,1}}{\lambda_{K,2}} \gg 1.
\end{align*}

{\color{black} Hinging upon the above spectral informations on the polygonal elements}, we introduce a linear transformation of an anisotropic element $K$ onto a kind of reference element $\widehat{K}$. {\color{black} For each $\xv\in K$, we define the mapping by
\begin{equation}\label{trafo:1}
  \xv \mapsto \widehat{\xv} = F_K(\xv)=A_K \xv
  \quad\mbox{ with }\quad
  A_K=\alpha_K \Lambda_K^{-1/2}U_K^\top
\end{equation}
where $\alpha_K>0$ will be chosen later. From now on, $\widehat{K}=F_K(K)$ will be called the reference element associated to $K$. 

It is possible to prove (see \cite{Weisser:2019}) the following result.

\begin{lemma}\label{lem:PropTransform}
  There  holds
  \begin{enumerate}
    \item $|\widehat K| = \alpha_K^2|K|/\sqrt{\det(\MCov(K))}$,
    \item $\overline{{\xv}}_{\widehat{K}} = F_K(\overline{{\xv}}_K)$,
    \item $\MCov(\widehat K) = \alpha_K^2I$.
  \end{enumerate}
\end{lemma}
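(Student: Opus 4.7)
The plan is to derive all three properties as direct consequences of the change-of-variables formula applied to the affine map $F_K$, combined with the orthogonal diagonalization of $\MCov(K)$. Since $A_K = \alpha_K \Lambda_K^{-1/2} U_K^\top$ and $U_K$ is orthogonal, $|\det A_K| = \alpha_K^2 (\lambda_{K,1}\lambda_{K,2})^{-1/2}$; on the other hand $\det \MCov(K) = \det(U_K \Lambda_K U_K^\top) = \lambda_{K,1}\lambda_{K,2}$, so $|\det A_K| = \alpha_K^2/\sqrt{\det \MCov(K)}$. This identity will be the workhorse throughout.

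For item (1), I would simply compute
\[
  |\widehat K| = \int_{\widehat K} d\widehat\xv = \int_K |\det A_K|\,d\xv = |\det A_K|\,|K|,
\]
and substitute the expression for $|\det A_K|$ just derived.

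For item (2), the strategy is again a change of variables $\widehat\xv = A_K\xv$ in the defining integral of the barycenter:
\[
  \overline\xv_{\widehat K} = \frac{1}{|\widehat K|}\int_{\widehat K}\widehat\xv\,d\widehat\xv
  = \frac{|\det A_K|}{|\widehat K|}\,A_K\!\int_K \xv\,d\xv
  = \frac{|\det A_K|\,|K|}{|\widehat K|}\,A_K\,\overline\xv_K,
\]
and from (1) the prefactor collapses to $1$, leaving $A_K \overline\xv_K = F_K(\overline\xv_K)$.

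For item (3), using the same change of variables together with the identity $\widehat\xv - \overline\xv_{\widehat K} = A_K(\xv - \overline\xv_K)$ furnished by (2), the integrand $(\widehat\xv-\overline\xv_{\widehat K})(\widehat\xv-\overline\xv_{\widehat K})^\top$ becomes $A_K(\xv-\overline\xv_K)(\xv-\overline\xv_K)^\top A_K^\top$, and by the same calculation as in (2) I get
\[
  \MCov(\widehat K) = A_K\,\MCov(K)\,A_K^\top.
\]
The final step is the purely algebraic simplification
\[
  A_K\MCov(K)A_K^\top
  = \alpha_K^2\,\Lambda_K^{-1/2}U_K^\top\,U_K\Lambda_K U_K^\top\,U_K\Lambda_K^{-1/2}
  = \alpha_K^2\,\Lambda_K^{-1/2}\Lambda_K\Lambda_K^{-1/2}
  = \alpha_K^2 I,
\]
where the orthogonality $U_K^\top U_K = I$ is used twice. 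Honestly there is no real obstacle here: every step is routine provided one is careful to keep track of the Jacobian factor $|\det A_K|$ and of the order of multiplication of the non-commuting factors $\Lambda_K^{-1/2}$ and $U_K^\top$. The only place where a minor subtlety arises is in recognizing that the $|\det A_K|\,|K|/|\widehat K|$ ratio in items (2) and (3) is exactly $1$ thanks to item (1), which is why proving (1) first is convenient.
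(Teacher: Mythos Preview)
Your argument is correct: the three items follow directly from the change-of-variables formula together with $|\det A_K|=\alpha_K^2/\sqrt{\det\MCov(K)}$ and the orthogonality of $U_K$, exactly as you write. The paper itself does not give a proof of this lemma but defers to \cite{Weisser:2019}; the computation you carry out is the natural (and presumably the same) one.
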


According to the previous lemma, the reference element $\widehat K$ is isotropic, since $\lambda_{\widehat K,1}/\lambda_{\widehat K,2}=1$, and thus, it has no dominant direction. For what concerns the choice of the parameter $\alpha_K$ we set  
\begin{equation}\label{eq:Alpha}
  \alpha_K = \left(\frac{\sqrt{\det(\MCov(K))}}{|K|}\right)^{1/d}
  = \left(\frac{\sqrt{\lambda_{K,1}\lambda_{K,2}}}{|K|}\right)^{1/2},
\end{equation}
which obviously ensures, in view of  Lemma~\ref{lem:PropTransform}, $|\widehat K|=1$.}
As usual, we mark the operators and functions defined over the reference configuration by a hat, as, for instance,
\[
 \widehat v = v\circ F_K^{-1}:\widehat K \to K.
\]
Obviously, it is
\begin{equation}\label{eq:IdentityGradient}
 \nabla v = \alpha_KU_K\Lambda_K^{-1/2}\widehat\nabla \widehat v,
\end{equation}
and, after some algebra,
\begin{equation}\label{eq:IdentityHessian}
 \widehat H(\widehat v) = \alpha_K^{-2}\Lambda_K^{1/2}U_K^\top H(v)U_K\Lambda_K^{1/2},
\end{equation}
where $H(v)$ denotes the Hessian matrix of~$v\in H^2(\Omega)$ and $\widehat H(\widehat v)$ the corresponding Hessian on the reference configuration.

%\section{Mesh regularity and preliminary results}
%\label{sec:Regularity}
Following \cite{Weisser:2019}, we are now ready to state the mesh requirements which will be needed in the sequel for deriving the properties  of the quasi-interpolation operator (Section \ref{S:quasi-ineterp}) and the anisotropic a posteriori error analysis (Section \ref{S:apos}). We first recall the notion of isotropic regular polygonal meshes (Definition \ref{def:reg_isotropic_mesh}) which is instrumental for the definition of {\it anisotropic} polygonal meshes (Definition \ref{def:reg_anisotropic_mesh}).

\begin{definition}[{\bf regular isotropic mesh}]\label{def:reg_isotropic_mesh}
A polygonal mesh  $\mathcal K_h$ is called {\it regular} or a {\it regular isotropic mesh}, if all elements $K\in \mathcal K_h$ 
are such that:
\begin{enumerate}
 \item[(a)] $K$ is a star-shaped polygon with respect to a circle of radius~$\rho_K$ and center~$z_K\in K$. 
 \item[(b)] The aspect ratio is uniformly bounded from above by $\sigma_\mathcal{K}$, i.e. $h_K/\rho_K<\sigma_\mathcal{K}$, being $h_K$ the diameter of $K$.
 \item[(c)] For every edge $E\subset\partial K$ it holds $h_K\leq c_\mathcal{K}h_E$, being $h_E$ the length of $E$.
\end{enumerate}
Here, the constants $\sigma_{\mathcal K}$ and $c_{\mathcal K}$ have to be uniform for all considered regular elements.
\end{definition}
%In~\cite{Weisser2014}, it has been shown that under these assumptions, the triangulation obtained by connecting the vertices of a regular polygonal element~$K$ with its point~$z_K$ is regular in the sense of Ciarlet \cite{BrennerScott2002}.

\begin{definition}[{\bf regular anisotropic mesh}]\label{def:reg_anisotropic_mesh}
 Let $\mathcal K_h$ be a polygonal mesh with anisotropic elements. $\mathcal K_h$ is called \emph{regular} or a \emph{regular anisotropic mesh}, if
 \begin{enumerate}
  \item[(a')] The reference configuration $\widehat K$ for all $K\in\mathcal K_h$ obtained by~\eqref{trafo:1} is a regular polygonal element according to Definition~\ref{def:reg_isotropic_mesh}.
  \item[(b')] Neighbouring elements behave similarly in their anisotropy. More precisely, for two neighbouring elements $\Kplus$ and $\Kminus$, i.e. $\overline \Kplus\cap\overline \Kminus\neq\varnothing$, with covariance matrices 
   \[
     M_\mathrm{Cov}(\Kplus) = U_{\Kplus}\Lambda_{\Kplus}U_{\Kplus}^\top
     \quad\mbox{and}\quad
     M_\mathrm{Cov}(\Kminus) = U_{\Kminus}\Lambda_{\Kminus}U_{\Kminus}^\top
   \] 
   as defined above, we can write
   \[
     \Lambda_{\Kminus} = (I+\Delta^{\Kplus,\Kminus})\Lambda_{\Kplus}
     \quad\mbox{with}\quad
     \Delta^{\Kplus,\Kminus} = \diag\left(\delta^{\Kplus,\Kminus}_1,\delta^{\Kplus,\Kminus}_2\right),
   \]
   and
   \[
     U_{\Kminus} = R^{\Kplus,\Kminus}U_{\Kplus}
     \quad\mbox{with}\quad
     R^{\Kplus,\Kminus} ~\text{rotation~matrix}
     %= \begin{pmatrix}
      %               \cos\phi^{\Kplus,\Kminus} & -\sin\phi^{\Kplus,\Kminus}\\
        %             \sin\phi^{\Kplus,\Kminus} &  \cos\phi^{\Kplus,\Kminus}
        %           \end{pmatrix},
   \]
   where for $i=1,2$
   \[
     0 \leq |\delta^{\Kplus,\Kminus}_i| < c_\delta < 1
     \quad\mbox{and}\quad
     \|I- R^{\Kplus,\Kminus}\|_0\left(\frac{\lambda_{\Kplus,1}}{\lambda_{\Kplus,2}}\right)^{1/2} < c_\phi.
    % |\phi^{\Kplus,\Kminus}|\left(\frac{\lambda_{\Kplus,1}}
    %{\lambda_{\Kplus,2}}\right)^{1/2} < c_\phi.
   \]
   uniformly for all neighbouring elements, being $\|\cdot\|_0$ the spectral norm.
 \end{enumerate}
\end{definition}

Thus a regular anisotropic element can be mapped according to~\eqref{trafo:1} onto a regular polygonal element in the usual sense. In the definition of quasi-interpolation operators (see Section \ref{S:quasi-ineterp} ), we deal, however, with patches of elements instead of single elements. Thus, we study the mapping of such patches. 
Let $\omega=\omega_{\sf v}$ be the neighbourhood of the vertex ${\sf v}$ which is defined by
\[
  \overline\omega_{\sf v} = \bigcup\left\{\overline{K'}: {\sf v}\in\overline{K'},\quad K'\in\mathcal K_h\right\}.
\]
Furthermore, for $K\in\mathcal K_h$, recall that the map $F_K$ defined in ~\eqref{trafo:1} is given by
\begin{equation*}
  \xv \mapsto F_K(\xv) = A_K \xv = \alpha_K \Lambda_K^{-1/2}U_K^\top \xv.
\end{equation*}
Consequently, we may write $\widehat K = F_K(K)$ and we know, that $\widehat K$ is regular for all $K\in\mathcal K_h$ with some regularity parameters $\sigma_\mathcal K$ and $c_\mathcal K$. However, let now $\Kplus,\Kminus\in\mathcal K_h$ with $\Kplus,\Kminus\subset\omega$. We are interested in the regularity of $F_{\Kplus}(\omega)$ and $F_{\Kplus}(\Kminus)$.
For the proofs of the following results we refer to \cite{Weisser:2019}.
\begin{lemma}\label{lem:regPerturbedMapping}
Let~$\mathcal K_h$ be a regular anisotropic mesh, $\omega=\omega_{\sf v}$ be a patch as described above, and $\Kplus,\Kminus\in\mathcal K_h$ with $\Kplus,\Kminus\subset\omega$. The mapped element $F_{\Kplus}(\Kminus)$ is regular in the sense of Definition \ref{def:reg_isotropic_mesh} with slightly perturbed regularity parameters $\widetilde\sigma_\mathcal K$ and $\widetilde c_\mathcal K$. Consequently, the mapped patch $F_K(\omega)$ consists of regular polygonal elements for all $K\in\mathcal K_h$ with $K\subset\omega$.
\end{lemma}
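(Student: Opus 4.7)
The plan is to factor the restriction of $F_{\Kplus}$ to $\Kminus$ through the reference element $\widehat{\Kminus}:=F_{\Kminus}(\Kminus)$, which by assumption (a') is already regular isotropic in the sense of Definition \ref{def:reg_isotropic_mesh}. Writing
\[
  F_{\Kplus}(\Kminus)=G(\widehat{\Kminus}),
  \qquad G:=F_{\Kplus}\circ F_{\Kminus}^{-1},
\]
it suffices to show that the linear map $G$ is bi-Lipschitz with constants depending only on $c_\delta$, $c_\phi$ and the isotropic regularity parameters $\sigma_{\mathcal K}$, $c_{\mathcal K}$: under such a bound the inscribed disk of $\widehat{\Kminus}$ is sent to an ellipse pinched between concentric disks with controlled radii, the diameter changes by at most a factor $\|G\|_0$ and edge lengths shrink by at most $\|G^{-1}\|_0$, so the three conditions of Definition \ref{def:reg_isotropic_mesh} transfer to $F_{\Kplus}(\Kminus)$ with perturbed constants $\widetilde\sigma_{\mathcal K}$, $\widetilde c_{\mathcal K}$.

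The core of the proof is therefore the explicit analysis of the matrix $B:=A_{\Kplus}A_{\Kminus}^{-1}$ representing $G$. Substituting the relations of (b'), $U_{\Kminus}=R^{\Kplus,\Kminus}U_{\Kplus}$ and $\Lambda_{\Kminus}=(I+\Delta^{\Kplus,\Kminus})\Lambda_{\Kplus}$, into \eqref{trafo:1} and using that the diagonal factors commute, I obtain
\[
  B=\frac{\alpha_{\Kplus}}{\alpha_{\Kminus}}\,\bigl(\Lambda_{\Kplus}^{-1/2}\widetilde R\,\Lambda_{\Kplus}^{1/2}\bigr)(I+\Delta^{\Kplus,\Kminus})^{1/2},
  \qquad \widetilde R:=U_{\Kplus}^\top R^{\Kplus,\Kminus}U_{\Kplus},
\]
which is still a rotation. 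The factor $(I+\Delta^{\Kplus,\Kminus})^{1/2}$ is diagonal with singular values in $[\sqrt{1-c_\delta},\sqrt{1+c_\delta}]$. A direct two-dimensional calculation of $\Lambda_{\Kplus}^{-1/2}\widetilde R\Lambda_{\Kplus}^{1/2}$ with $\widetilde R$ a rotation of angle $\theta$ shows that its only potentially large entries are of the form $(\lambda_{\Kplus,1}/\lambda_{\Kplus,2})^{\pm 1/2}\sin\theta$, and since $\|I-R^{\Kplus,\Kminus}\|_0\simeq|\sin\theta|$, the second inequality in (b') bounds these entries uniformly by $c_\phi$. The prefactor $\alpha_{\Kplus}/\alpha_{\Kminus}$ is controlled through \eqref{eq:Alpha} and the identity $\alpha_K^2|K|=\sqrt{\det\Lambda_K}$ coming from Lemma \ref{lem:PropTransform}: the determinant ratio $\det\Lambda_{\Kplus}/\det\Lambda_{\Kminus}$ is pinned by (b'), while the area ratio $|\Kminus|/|\Kplus|$ is controlled by a standard chain argument along the fan of edge-neighbouring elements of the patch $\omega_{\sf v}$, iteratively applying (b') to each adjacent pair in reference configuration. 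Combining these three contributions yields $\|B\|_0+\|B^{-1}\|_0\leq C(\sigma_{\mathcal K},c_{\mathcal K},c_\delta,c_\phi)$.

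The hard step is the central one: keeping $\|B\|_0\|B^{-1}\|_0$ uniformly bounded \emph{in spite of} arbitrarily large anisotropies $\lambda_{\Kplus,1}/\lambda_{\Kplus,2}$. The matrix $\Lambda_{\Kplus}^{-1/2}\widetilde R\Lambda_{\Kplus}^{1/2}$ blows up as $\lambda_{\Kplus,1}/\lambda_{\Kplus,2}\to\infty$ for a fixed non-trivial rotation, and the coupling condition $\|I-R^{\Kplus,\Kminus}\|_0(\lambda_{\Kplus,1}/\lambda_{\Kplus,2})^{1/2}<c_\phi$ in Definition \ref{def:reg_anisotropic_mesh} is designed precisely to neutralise this blow-up. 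Once the bi-Lipschitz control of $G$ is in place, the second assertion of the lemma — that the whole mapped patch $F_K(\omega)$ consists of regular polygonal elements for every $K\subset\omega$ — follows immediately by applying the result just proved to every pair $K,K'\subset\omega$.
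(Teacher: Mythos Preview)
The paper does not give its own proof of this lemma; it defers entirely to \cite{Weisser:2019}. Your approach---factoring $F_{\Kplus}|_{\Kminus}$ through the regular reference element $\widehat{\Kminus}=F_{\Kminus}(\Kminus)$ and showing that the linear map $G=F_{\Kplus}\circ F_{\Kminus}^{-1}$ has uniformly bounded distortion by exploiting the structural relations in~(b')---is the natural one and coincides with what is done in that reference. In particular, you have correctly isolated the heart of the matter: the conjugated rotation $\Lambda_{\Kplus}^{-1/2}\widetilde R\,\Lambda_{\Kplus}^{1/2}$ would blow up with the anisotropy ratio, and the coupling constraint $\|I-R^{\Kplus,\Kminus}\|_0(\lambda_{\Kplus,1}/\lambda_{\Kplus,2})^{1/2}<c_\phi$ is precisely what prevents this.

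Two small clean-ups are worth noting. First, the regularity properties of Definition~\ref{def:reg_isotropic_mesh} are \emph{scale-invariant}: star-shapedness, the aspect ratio $h_K/\rho_K$, and the edge-to-diameter ratio are unaffected by a dilation. Hence only the condition number $\|B\|_0\,\|B^{-1}\|_0$ matters, and the scalar prefactor $\alpha_{\Kplus}/\alpha_{\Kminus}$ drops out---your detour through the area ratio is not needed for the conclusion of the lemma. Second, even if one does want to bound that prefactor, the ``chain argument along the fan'' is both vague and unnecessary: since every element of $\omega_{\sf v}$ contains the vertex~${\sf v}$, any two elements $\Kplus,\Kminus\subset\omega_{\sf v}$ already satisfy $\overline{\Kplus}\cap\overline{\Kminus}\neq\varnothing$, so~(b') applies to them directly. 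A cleaner bound is obtained by observing from Lemma~\ref{lem:PropTransform} that $\MCov(\widehat K)=\alpha_K^2 I$, so $\alpha_K$ is the (repeated) covariance eigenvalue of the regular isotropic unit-area element $\widehat K$; the latter is pinned above and below by constants depending only on~$\sigma_{\mathcal K}$, since the diameter and inscribed radius of $\widehat K$ are. Either way, your main line of argument is sound.
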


\begin{proposition}\label{prop:BoundedNumOfNodesVSElems}
Let~$\mathcal K_h$ be a regular anisotropic mesh. Each vertex~${\sf v}$ of the mesh~$\mathcal K_h$ belongs to a uniformly bounded number of elements. Viceversa, each element $K\in\mathcal K_h$ has a uniformly bounded number of vertices on its boundary. 
\end{proposition}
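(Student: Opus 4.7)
The plan is to reduce both claims to the corresponding (well-known) statements for regular \emph{isotropic} polygonal meshes by means of the affine bijection $F_K$ of~\eqref{trafo:1}. Since $F_K$ preserves vertices, edges and their incidence, any uniform count obtained in the transformed configuration transfers back verbatim to $\mathcal{K}_h$.

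For the second assertion, fix $K\in\mathcal{K}_h$ and consider $\widehat K = F_K(K)$. By Definition~\ref{def:reg_anisotropic_mesh}(a'), $\widehat K$ is regular isotropic with uniform parameters $\sigma_\mathcal{K}, c_\mathcal{K}$. Connecting the star-shape center $z_{\widehat K}$ to every pair of consecutive vertices of $\partial\widehat K$ produces a fan triangulation of $\widehat K$ whose triangles have base $\geq h_{\widehat K}/c_\mathcal{K}$ (by Definition~\ref{def:reg_isotropic_mesh}(c)) and height $\geq \rho_{\widehat K}\geq h_{\widehat K}/\sigma_\mathcal{K}$, the latter because the inscribed disk $B(z_{\widehat K},\rho_{\widehat K})\subset \widehat K$ cannot cross the line supporting any edge. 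Hence each fan triangle has area $\gtrsim h_{\widehat K}^2/(\sigma_\mathcal{K} c_\mathcal{K})$, while $|\widehat K|\leq \pi h_{\widehat K}^2$, producing a uniform upper bound on the number of vertices of $\widehat K$ and, by bijectivity of $F_K$, of $K$ itself.

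For the first assertion, fix a vertex ${\sf v}$, pick any $K^+\subset\omega_{\sf v}$, and apply $F_{K^+}$ to the whole patch. By Lemma~\ref{lem:regPerturbedMapping}, $F_{K^+}(\omega_{\sf v})$ is a patch of regular isotropic polygons (with uniformly perturbed parameters $\widetilde\sigma_\mathcal{K}, \widetilde c_\mathcal{K}$) clustered around the single point $F_{K^+}({\sf v})$. For each such polygon $\widehat K'$ in the patch, its inscribed disk $B(z_{\widehat K'},\rho_{\widehat K'})$ lies inside $\widehat K'$, and therefore its radial projection from $F_{K^+}({\sf v})$ lies inside the wedge formed at that vertex by the two incident edges of $\widehat K'$; elementary trigonometry then yields the interior-angle estimate
\[
  \sin\!\bigl(\theta_{\widehat K'}/2\bigr) \,\geq\, \frac{\rho_{\widehat K'}}{|F_{K^+}({\sf v})-z_{\widehat K'}|} \,\geq\, \frac{1}{\widetilde\sigma_\mathcal{K}}.
\]
Since the angles at $F_{K^+}({\sf v})$ sum to at most $2\pi$, the patch contains at most $2\pi/\theta_{\min}$ elements, and bijectivity of $F_{K^+}$ transfers the bound to $\omega_{\sf v}$.

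The main technical point is precisely the pair of elementary geometric estimates above: the definition of a regular polygon bounds neither the perimeter nor the interior angles explicitly, and both ingredients must be extracted by combining the star-shapedness property (providing an inscribed disk of radius $\simeq h_K$) with the uniform lower bound on edge lengths (ruling out sliver edges near the vertex). Once these two quantitative estimates are in hand, the rest of the argument reduces to area, respectively angle, bookkeeping that parallels the classical simplicial finite element setting.
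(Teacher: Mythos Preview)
Your argument is correct. The paper itself does not supply a proof of this proposition but simply refers the reader to \cite{Weisser:2019}; your approach --- reducing both assertions to the regular \emph{isotropic} case via the affine bijection $F_K$, then using a fan-triangulation area count for the vertex bound and an interior-angle lower bound for the patch-cardinality bound --- is the natural one and is almost certainly the argument in the cited reference.

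One small remark on the trigonometric step: the inequality $\sin(\theta_{\widehat K'}/2)\geq \rho_{\widehat K'}/|F_{K^+}({\sf v})-z_{\widehat K'}|$ is most cleanly justified by noting that star-shapedness of $\widehat K'$ with respect to the \emph{entire} disk $B(z_{\widehat K'},\rho_{\widehat K'})$ forces the convex hull of $\{F_{K^+}({\sf v})\}$ and this disk to lie inside $\widehat K'$; the apex angle of that cone at $F_{K^+}({\sf v})$ is exactly $2\arcsin\bigl(\rho_{\widehat K'}/|F_{K^+}({\sf v})-z_{\widehat K'}|\bigr)$, and since the cone sits inside the local wedge of $\widehat K'$ at that vertex, the interior angle dominates. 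This avoids having to argue separately that the disk lies in the infinite wedge determined by the two incident edge-lines, and reflex vertices are handled trivially since there $\theta_{\widehat K'}\geq\pi$ already suffices.
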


{\color{black} In the rest of the paper we will work under the following mesh assumption.
\begin{assumption}\label{assumption}
Let $\{\mathcal{K}_h \}_h$ be a sequence of regular anisotropic meshes with regularity parameters uniformly bounded with respect to $h$.
\end{assumption}

Finally, we recall some instrumental results (see \cite{Weisser:2019} for the proofs) that will employed in the next section.
}

\begin{lemma}\label{lem:MapH1norm}
 Let $K\in\mathcal K_h$ be a polygonal element of a regular anisotropic mesh~$\mathcal K_h$. Then, for $v\in H^1(K)$ and corresponding $\widehat v\in H^1(\widehat K)$ there holds
\begin{eqnarray}
&& \|\widehat{v}\|_{L^2(\widehat{K})}=\vert K \vert^{-1/2} 
\|v\|_{L^2(K)}\label{norm1}\\
&&\|\widehat{\nabla}\widehat{v}\|_{L^2(\widehat{K})}=\vert K \vert^{-1/2} 
\|A_K^{-T} \nabla v\|_{L^2(K)}\label{norm2}\\
&&\sqrt{\frac{\lambda_{K,2}}{\lambda_{K,1}}}\;|\widehat v|_{H^1(\widehat K)}^2 
  \leq |v|_{H^1(K)}^2 
  \leq \sqrt{\frac{\lambda_{K,1}}{\lambda_{K,2}}}\;|\widehat v|_{H^1(\widehat K)}^2\label{norm3} .
\end{eqnarray}
\end{lemma}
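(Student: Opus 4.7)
The plan is to derive all three identities as direct consequences of the change-of-variables formula for the affine map $F_K(\xv)=A_K\xv$, combined with the spectral structure of $A_K=\alpha_K\Lambda_K^{-1/2}U_K^\top$. The first step I would perform is the Jacobian computation: since $U_K$ is orthogonal, $\det(A_K)=\alpha_K^2/\sqrt{\lambda_{K,1}\lambda_{K,2}}$, and the choice \eqref{eq:Alpha} of $\alpha_K$ reduces this exactly to $1/|K|$. Consequently $d\widehat\xv=|K|^{-1}d\xv$, and this single identity drives the rest of the argument.

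For \eqref{norm1}, I would then just apply the change of variables: $\|\widehat v\|_{L^2(\widehat K)}^2=\int_{\widehat K}\widehat v(\widehat\xv)^2\,d\widehat\xv=|K|^{-1}\int_K v(\xv)^2\,d\xv$, which is the claim after taking square roots. For \eqref{norm2}, I would read from \eqref{eq:IdentityGradient} (noting $A_K^\top=\alpha_K U_K\Lambda_K^{-1/2}$) the equivalent form $\widehat\nabla\widehat v=A_K^{-\top}\nabla v$, substitute this into $\int_{\widehat K}|\widehat\nabla\widehat v|^2\,d\widehat\xv$, and again use $d\widehat\xv=|K|^{-1}d\xv$ to conclude.

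The third identity \eqref{norm3} is the only one requiring a little algebra. Starting from \eqref{norm2}, I would estimate $|A_K^{-\top}\nabla v|^2$ pointwise in terms of $|\nabla v|^2$ using the spectral decomposition. Since $A_K^{-1}A_K^{-\top}=(A_K^\top A_K)^{-1}=\alpha_K^{-2}U_K\Lambda_K U_K^\top$, setting $\yv=U_K^\top\nabla v$ one has $|A_K^{-\top}\nabla v|^2=\alpha_K^{-2}(\lambda_{K,1}y_1^2+\lambda_{K,2}y_2^2)$, so that $\alpha_K^{-2}\lambda_{K,2}|\nabla v|^2\le |A_K^{-\top}\nabla v|^2\le \alpha_K^{-2}\lambda_{K,1}|\nabla v|^2$ by orthogonality of $U_K$. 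Integrating over $K$ and combining with \eqref{norm2} produces $|K|^{-1}\alpha_K^{-2}\lambda_{K,2}\,|v|_{H^1(K)}^2\le|\widehat v|_{H^1(\widehat K)}^2\le|K|^{-1}\alpha_K^{-2}\lambda_{K,1}\,|v|_{H^1(K)}^2$. Finally, the definition \eqref{eq:Alpha} gives the clean identity $|K|^{-1}\alpha_K^{-2}\lambda_{K,i}=\sqrt{\lambda_{K,i}/\lambda_{K,3-i}}$ for $i=1,2$, and rearranging the resulting chain of inequalities yields \eqref{norm3}.

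No real obstacle is anticipated; the only point that needs care is the bookkeeping in \eqref{norm3}, where one must keep straight the direction of the inequality when passing from bounds on $|\widehat v|_{H^1(\widehat K)}^2$ in terms of $|v|_{H^1(K)}^2$ to the form stated in the lemma (with $|v|_{H^1(K)}^2$ sandwiched between multiples of $|\widehat v|_{H^1(\widehat K)}^2$). This is purely a rearrangement and does not require any additional mesh regularity beyond the fact that $\alpha_K$ is well defined, i.e.\ that $K$ is non-degenerate.
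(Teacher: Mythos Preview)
Your proof is correct. The paper does not actually supply a proof of this lemma; it cites \cite{Weisser:2019} for the argument, so there is nothing in the paper itself to compare against. Your change-of-variables computation, together with the spectral identity $(A_K^\top A_K)^{-1}=\alpha_K^{-2}U_K\Lambda_K U_K^\top$ and the simplification $|K|^{-1}\alpha_K^{-2}=(\lambda_{K,1}\lambda_{K,2})^{-1/2}$ via \eqref{eq:Alpha}, is exactly the natural route and yields all three claims cleanly.
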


\begin{lemma}[\bf anisotropic trace inequality]\label{lem:AnisotropicTraceInequality}
Let $K\in\mathcal K_h$ be a polygonal element of a regular anisotropic mesh~$\mathcal K_h$. For an edge $E \subset \partial K$ it holds
 \begin{equation}\label{eq:AnisotropicTraceInequality}
  \|v\|_{L_2(E)}^2 \lesssim \;\frac{|E|}{|K|}\left(\|v\|_{L_2(K)}^2+\|\alpha_K^{-1}\Lambda_K^{1/2}U_K^\top\nabla v\|_{L_2(K)}^2\right).
 \end{equation}
 \end{lemma}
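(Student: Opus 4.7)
The plan is to reduce the stated inequality to a standard scaled trace inequality on the isotropic reference element $\widehat K$ defined via the map $F_K$ in \eqref{trafo:1}, and then track the scaling of each $L^2$ norm when pulling back to $K$.

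First, by the choice of $\alpha_K$ in \eqref{eq:Alpha} one has $|\widehat K|=1$ and, by Lemma~\ref{lem:regPerturbedMapping}, $\widehat K$ is regular in the sense of Definition~\ref{def:reg_isotropic_mesh} with parameters independent of $K$. On such a domain the classical scaled trace inequality holds uniformly: for every $\widehat v\in H^1(\widehat K)$ and every edge $\widehat E=F_K(E)\subset\partial\widehat K$,
\begin{equation*}
\|\widehat v\|_{L^2(\widehat E)}^2 \;\lesssim\; \|\widehat v\|_{L^2(\widehat K)}^2 + \|\widehat\nabla\widehat v\|_{L^2(\widehat K)}^2.
\end{equation*}

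Second, I would convert the two reference-element norms back to $K$ using \eqref{norm1} and \eqref{norm2} of Lemma~\ref{lem:MapH1norm}, yielding at once
\begin{equation*}
\|\widehat v\|_{L^2(\widehat K)}^2 + \|\widehat\nabla\widehat v\|_{L^2(\widehat K)}^2 \;=\; \frac{1}{|K|}\Big(\|v\|_{L^2(K)}^2 + \|\alpha_K^{-1}\Lambda_K^{1/2}U_K^\top\nabla v\|_{L^2(K)}^2\Big).
\end{equation*}

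Third, I would perform the analogous change of variables on the edge. Since $F_K(\xv)=A_K\xv$ is affine and $E$ is a straight segment with constant unit tangent $\tngE$, the image edge $\widehat E$ has arc-length element $d\widehat s = \|A_K\tngE\|\,ds$, whence
\begin{equation*}
\|\widehat v\|_{L^2(\widehat E)}^2 = \|A_K\tngE\|\,\|v\|_{L^2(E)}^2, \qquad |\widehat E| = \|A_K\tngE\|\,|E|.
\end{equation*}
Combining these with the reference-element trace inequality gives $\|A_K\tngE\|\,\|v\|_{L^2(E)}^2 \lesssim \tfrac{1}{|K|}\bigl(\|v\|_{L^2(K)}^2+\|\alpha_K^{-1}\Lambda_K^{1/2}U_K^\top\nabla v\|_{L^2(K)}^2\bigr)$, and to conclude it remains to identify $1/\|A_K\tngE\|$ with $|E|$ up to a multiplicative constant.

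The main (and only delicate) point is therefore the claim that $|\widehat E|\simeq 1$ uniformly, which is equivalent to $\|A_K\tngE\|\simeq 1/|E|$. This follows from the regularity of $\widehat K$: because $|\widehat K|=1$ and the aspect ratio is controlled (Definition~\ref{def:reg_isotropic_mesh}(a)--(b)), the diameter $h_{\widehat K}$ is equivalent to $1$ from above and below, and by Definition~\ref{def:reg_isotropic_mesh}(c) every edge satisfies $h_{\widehat E}\simeq h_{\widehat K}\simeq 1$; since edges are straight, $|\widehat E|=h_{\widehat E}$. Putting everything together yields exactly \eqref{eq:AnisotropicTraceInequality}. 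I expect this last bookkeeping of the factor $|\widehat E|\simeq 1$ to be the one place where the anisotropic mesh-regularity of Definition~\ref{def:reg_anisotropic_mesh} is genuinely needed, since it guarantees that the reference element really is isotropically regular.
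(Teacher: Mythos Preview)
Your argument is correct and is the natural route to the estimate: map to the unit-area reference element, apply a uniform trace inequality there, and undo the change of variables using \eqref{norm1}--\eqref{norm2} together with the edge scaling $|\widehat E|=\|A_K\tngE\|\,|E|\simeq 1$. The paper does not prove this lemma itself but defers to \cite{Weisser:2019}, and what you wrote is essentially how that reference proceeds.

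Two minor remarks. First, the regularity of $\widehat K=F_K(K)$ is guaranteed directly by Definition~\ref{def:reg_anisotropic_mesh}(a'); Lemma~\ref{lem:regPerturbedMapping} is about mapping a \emph{neighbouring} element $\Kminus$ through $F_{\Kplus}$ and is not needed here. Second, only part~(a') of the anisotropic regularity is used in this proof, not the neighbour compatibility~(b'); the result is genuinely local to a single element.
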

 \begin{lemma}[\bf best-approximation by a constant]\label{lem:BestApproxByConst}
Let $K\in\mathcal K_h$ be a polygonal element of a regular anisotropic mesh~$\mathcal K_h$. For $v\in H^1(K)$, there exists a constant $p\in\mathcal P^0(K)$ such that
 \[
   \|v-p\|_{L_2(K)} \lesssim\;\|\alpha_K^{-1}\Lambda_K^{1/2}U_K^\top\nabla v\|_{L_2(K)}.
 \]

\end{lemma}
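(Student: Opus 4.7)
The plan is to transport the problem to the reference element $\widehat K$, apply a standard Poincaré–Wirtinger inequality there (which is available because $\widehat K$ is a regular isotropic polygonal element of unit area), and then undo the transformation using the norm equivalences in Lemma~\ref{lem:MapH1norm}. The anisotropic weight $\alpha_K^{-1}\Lambda_K^{1/2}U_K^\top$ is precisely what appears naturally when one writes $A_K^{-\top}$ in terms of the eigen-data of $\MCov(K)$, which is the algebraic heart of the argument.

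First I would choose $p\in\mathcal P^0(K)$ to be the constant whose value equals $\widehat p \eqdef |\widehat K|^{-1}\int_{\widehat K}\widehat v \,d\widehat\xv$, the integral mean of $\widehat v$ over $\widehat K$. Since $F_K$ is affine, this $p$ is indeed a constant on $K$, and on the reference configuration we have $\widehat{v-p} = \widehat v - \widehat p$. Because $\widehat K$ is, by Lemma~\ref{lem:regPerturbedMapping} and the construction of $F_K$, a regular isotropic polygonal element in the sense of Definition~\ref{def:reg_isotropic_mesh} with $|\widehat K|=1$ (hence with diameter bounded above and below by constants depending only on the regularity parameters), a standard Poincaré–Wirtinger inequality yields
\[
\|\widehat v - \widehat p\|_{L^2(\widehat K)} \lesssim |\widehat v|_{H^1(\widehat K)},
\]
where the hidden constant depends only on $\sigma_\calK$ and $c_\calK$.

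Next I would translate both sides back to $K$ using Lemma~\ref{lem:MapH1norm}. From \eqref{norm1}, applied to $v-p\in H^1(K)$,
\[
\|\widehat v - \widehat p\|_{L^2(\widehat K)} = |K|^{-1/2}\,\|v-p\|_{L^2(K)},
\]
and from \eqref{norm2},
\[
|\widehat v|_{H^1(\widehat K)} = \|\widehat\nabla\widehat v\|_{L^2(\widehat K)} = |K|^{-1/2}\,\|A_K^{-\top}\nabla v\|_{L^2(K)}.
\]
A short linear-algebra computation using \eqref{trafo:1} gives $A_K^{-\top} = \alpha_K^{-1}\Lambda_K^{1/2}U_K^\top$, since $A_K = \alpha_K\Lambda_K^{-1/2}U_K^\top$ implies $A_K^\top = \alpha_K U_K \Lambda_K^{-1/2}$ and inversion is immediate by \eqref{eq:eigenvalues}. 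Substituting into the Poincaré estimate and cancelling the common factor $|K|^{-1/2}$ yields the claim.

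The only delicate point is to make sure the constant in the Poincaré–Wirtinger inequality on $\widehat K$ is genuinely uniform in $K\in\calK_h$; this is exactly what Assumption~\ref{assumption} together with Lemma~\ref{lem:regPerturbedMapping} and Definition~\ref{def:reg_isotropic_mesh} provides, because all reference elements $\widehat K$ share uniform regularity parameters $\sigma_\calK,c_\calK$ and unit area, so the Poincaré constant for $\widehat K$ can be bounded independently of $K$ and $h$. Once this is in place, everything else is a direct application of the change-of-variable identities already established, and no further approximation estimate is required.
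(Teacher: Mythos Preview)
Your argument is correct and is the natural one: transport to the reference element via $F_K$, apply Poincar\'e--Wirtinger on the regular isotropic polygon $\widehat K$ of unit area, and pull back using \eqref{norm1}--\eqref{norm2} together with the identity $A_K^{-\top}=\alpha_K^{-1}\Lambda_K^{1/2}U_K^\top$. One minor remark: you cite Lemma~\ref{lem:regPerturbedMapping} for the regularity of $\widehat K$, but that lemma concerns the image of \emph{neighbouring} elements under $F_{K^+}$; the regularity of $\widehat K=F_K(K)$ itself is simply item~(a') of Definition~\ref{def:reg_anisotropic_mesh}.

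As for comparison: the paper does not give a proof of this lemma at all---it is one of three auxiliary results (Lemmas~\ref{lem:MapH1norm}, \ref{lem:AnisotropicTraceInequality}, \ref{lem:BestApproxByConst}) for which the proofs are deferred to~\cite{Weisser:2019}. Your argument is exactly the expected one and almost certainly coincides with what is done there.
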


\section{Quasi-Interpolation of Functions in $H^1(\Omega)$} \label{S:quasi-ineterp}
{\color{black} Let $\omega$ be a patch of physical elements belonging to a regular anisotropic polygonal mesh. Let $\widehat{\omega}$ be the patch of reference elements $\widehat{K}$ 
such that $\hat\omega=F_{K^*}(\omega)$, where the mapping is dictated by an element $K^*$ of the patch $\omega$.} On the reference patch $\widehat{\omega}$ we introduce the space
\begin{equation}\label{mapped-space}
\widehat{\Theta}(\widehat{\omega})=\{\widehat{\theta}\in C^0(\widehat{\omega}):~\forall
\widehat{K} \in \widehat{\omega} \, \,\widehat{\theta}_{\vert \widehat{K}} =\theta\circ F_{K^*}^{-1}, ~\theta\in V_h( F_{K^*}^{-1}(\widehat{K}))
\}
\end{equation}
where $V_h( F_{K^*}^{-1}(\widehat{K})) $ is the lowest order local virtual element space defined on the  polygon $F_{K^*}^{-1}(\widehat{K})$.\footnote{Note that the polygon $F_{K^*}^{-1}(\widehat{K})$ is not necessarily equal to $K^*$, unless we consider exactly the reference polygon $\widehat{K^*}$ associated to $K^*$.} 
{\color{black} We remark that in view of Lemma \ref{lem:regPerturbedMapping} the specific choice of the element $K^*$ in the definition of the space $\widehat{\Theta}(\widehat{\omega})$ is not restrictive.  Moreover, we observe that, in view of Assumption~\ref{assumption}, the dimension of $\widehat{\Theta}(\widehat{\omega})$ is uniformly bounded with respect to $h$.}
Finally, it is worth noticing that functions in $\widehat{\Theta}(\widehat{\omega})$ are not necessarily virtual element functions. However, constant functions are contained in $\widehat{\Theta}(\widehat{\omega})$ and this will be sufficient for our scopes.

{\color{black} Now, following \cite{Bernardi-Girault:1998}, we introduce a projection operator $\widehat{r}_\omega(\widehat{v})$ on the reference patch $\widehat{\omega}$.}
\begin{definition}\label{def:r}
For any function $\widehat{v} \in L^1(\widehat{\omega})$ we define 
$\widehat{r}_\omega(\widehat{v})\in \widehat{\Theta}(\widehat{\omega})$ as 
\begin{equation}
\int_{\widehat{\omega}}(\widehat{r}_{\widehat{\omega}}( \widehat{v})- \widehat{v}) \widehat{\theta}=0
\qquad \forall \widehat{\theta}\in \widehat{\Theta}(\widehat{\omega}).
\end{equation}
\end{definition}
 It is important to remark that
$\widehat{r}_{\widehat{\omega}}$ is a projection operator  on $\widehat{\omega}$. 

On the physical patch $\omega$ we can define 
$r_\omega(v)$ so that $r_\omega(v)\circ F_{K^*}^{-1}=\widehat{r}_{\widehat{\omega}}(v\circ F_{K^*}^{-1})$, i.e.
$\widehat{r_\omega(v)}=\widehat{r}_{\widehat{\omega}}(\widehat{v})$. 
Let  $\omega_i$ be the patch of elements sharing the vertex $\sf{v}_i$ and set $r_i=r_{\omega_i}$. {\color{black} The operators $r_i$ will be employed to build the quasi-interpolant $\mathfrak I_C$ (see \eqref{def:quasi-interpolant} below). In the sequel, we collect some approximation results for $r_i$ that will be instrumental for proving the approximation properties of $\mathfrak I_C$}.
\begin{lemma}\label{lemma1}
Let $\mathcal K_h$ be a \emph{regular anisotropic mesh}.
For any $K\subset \omega_i$ there hold
\begin{equation}\label{eq1:lemma1}
\| u - r_i(u)\|^2_{L^2(K)} \lesssim \sum_{\widetilde{K}\subset \omega_i} \frac{\vert K\vert}{\vert \widetilde{K}\vert} \|A_{\widetilde{K}}^{-T}\nabla u\|^2_{L^2(\widetilde{K})}.
\end{equation}
which can also be written in the following way
\begin{equation}\label{eq2:lemma1}
\| u - r_i(u)\|^2_{L^2(K)} \lesssim \vert K\vert \sum_{\widetilde{K}\subset \omega_i} \sqrt{\frac{\lambda_{\widetilde{K},1}}{\lambda_{\widetilde{K},2}}}\vert u \vert^2_{H^1(\widetilde{K})}.
\end{equation}

\end{lemma}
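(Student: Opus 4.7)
The plan is to transform the problem to the reference patch, bound the $L^2$-projection error by a Poincaré--Wirtinger argument, and then map back to the physical configuration, using the neighbour compatibility in Definition~\ref{def:reg_anisotropic_mesh}(b$'$) to convert the rigid pull-back quantity $A_K^{-T}\nabla u$ into the element-intrinsic one $A_{\widetilde K}^{-T}\nabla u$.

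First, I would choose $K^{\ast}=K$ in~\eqref{trafo:1}, so that $\widehat K=F_K(K)$ has unit area and $|\det A_K|=1/|K|$. The change of variables underlying Lemma~\ref{lem:MapH1norm}, equation~\eqref{norm1}, gives
\[
\|u-r_i(u)\|_{L^2(K)}^2=|K|\,\|\widehat u-\widehat r_{\widehat\omega_i}(\widehat u)\|_{L^2(\widehat K)}^2,
\]
with $\widehat u=u\circ F_K^{-1}$. Since $\widehat r_{\widehat\omega_i}$ is, by Definition~\ref{def:r}, the $L^2(\widehat\omega_i)$-orthogonal projection onto $\widehat\Theta(\widehat\omega_i)$ and constants belong to this space,
\[
\|\widehat u-\widehat r_{\widehat\omega_i}(\widehat u)\|_{L^2(\widehat K)}^2\le\|\widehat u-\widehat p\|_{L^2(\widehat\omega_i)}^2
\quad\forall\widehat p\in\mathbb R.
\]
By Lemma~\ref{lem:regPerturbedMapping} and Proposition~\ref{prop:BoundedNumOfNodesVSElems}, $\widehat\omega_i$ is a union of a uniformly bounded number of regular polygonal elements of unit area, hence has uniformly bounded diameter, and a standard Poincaré--Wirtinger inequality with $\widehat p$ the mean of $\widehat u$ over $\widehat\omega_i$ gives $\|\widehat u-\widehat p\|_{L^2(\widehat\omega_i)}^2\lesssim|\widehat u|_{H^1(\widehat\omega_i)}^2$. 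Splitting the seminorm element by element and using~\eqref{eq:IdentityGradient} (equivalently, $\widehat\nabla\widehat u=A_K^{-T}\nabla u\circ F_K^{-1}$) together with the value of $|\det A_K|$, one finds $|\widehat u|_{H^1(F_K(\widetilde K))}^2=|K|^{-1}\|A_K^{-T}\nabla u\|_{L^2(\widetilde K)}^2$, and so
\[
\|u-r_i(u)\|_{L^2(K)}^2\lesssim\sum_{\widetilde K\subset\omega_i}\|A_K^{-T}\nabla u\|_{L^2(\widetilde K)}^2.
\]

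The hard part is the final step: trading $A_K^{-T}$ for $A_{\widetilde K}^{-T}$ pointwise on $\widetilde K$. I would write $A_K^{-T}=\alpha_K^{-1}U_K\Lambda_K^{1/2}$ and invoke Definition~\ref{def:reg_anisotropic_mesh}(b$'$): the diagonal factors satisfy $1-c_\delta<\lambda_{\widetilde K,j}/\lambda_{K,j}<1+c_\delta$, while the rotation connecting the eigenframes $\{u_{K,j}\}$ and $\{u_{\widetilde K,j}\}$ has angle $\theta$ with $|\sin\theta|\lesssim\sqrt{\lambda_{K,2}/\lambda_{K,1}}$. Decomposing $\nabla u$ in the $K$-eigenframe and re-expressing it in the $\widetilde K$-eigenframe, the potentially dangerous mixing term $\lambda_{K,2}\sin^2\theta$ is absorbed by $\lambda_{K,1}\simeq\lambda_{\widetilde K,1}$, leading to the pointwise bound
\[
\|A_K^{-T}\vv\|^2\lesssim\frac{\alpha_{\widetilde K}^2}{\alpha_K^2}\,\|A_{\widetilde K}^{-T}\vv\|^2\simeq\frac{|K|}{|\widetilde K|}\,\|A_{\widetilde K}^{-T}\vv\|^2,
\]
where the last equivalence uses $\alpha_K^2=\sqrt{\lambda_{K,1}\lambda_{K,2}}/|K|$ together with eigenvalue comparability. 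Integrating over $\widetilde K$ and summing yields~\eqref{eq1:lemma1}. Finally, \eqref{eq2:lemma1} follows by applying Lemma~\ref{lem:MapH1norm}, inequality~\eqref{norm3}, on each $\widetilde K$ with its own map $F_{\widetilde K}$, which gives $\|A_{\widetilde K}^{-T}\nabla u\|_{L^2(\widetilde K)}^2\le|\widetilde K|\sqrt{\lambda_{\widetilde K,1}/\lambda_{\widetilde K,2}}\,|u|_{H^1(\widetilde K)}^2$, and collecting the common factor $|K|$.
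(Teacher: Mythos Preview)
Your argument is essentially the paper's: transfer to the reference patch via $F_K$, use the projector property together with a Poincar\'e inequality on $\widehat\omega_i$, split elementwise, and map back via \eqref{norm2}--\eqref{norm3}; the only difference is that you spell out explicitly the neighbour-compatibility step $\|A_K^{-T}\vv\|^2\lesssim(|K|/|\widetilde K|)\,\|A_{\widetilde K}^{-T}\vv\|^2$, which the paper absorbs into a single $\lesssim$ when invoking~\eqref{norm2}. One small correction in that rotation argument: the genuinely dangerous cross-term is $\lambda_{K,1}\sin^2\theta$ (appearing in front of the $\vec r_{\widetilde K,2}$-component and needing to be $\lesssim\lambda_{\widetilde K,2}$), not $\lambda_{K,2}\sin^2\theta$; since $\sin^2\theta\lesssim\lambda_{K,2}/\lambda_{K,1}$ by Definition~\ref{def:reg_anisotropic_mesh}(b$'$), your conclusion is unaffected.
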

\begin{proof}
Let $K\in\omega_i$ then we have  
\begin{eqnarray}
\|u-r_i(u)\|_{L^2(K)}&=&\vert K\vert^{1/2}\|\widehat{u}-\widehat{r}_{\color{black}\widehat{\omega}_i}(\widehat{u})\|_{L^2(\widehat{K})}.\label{aux:lemma1}
%&\leq& 2 \vert K_0\vert^{1/2}\|\widehat{u}\|_{L^2(\widehat{\omega})}\nonumber\\
%&=&2 \vert K_0\vert^{1/2} 
%\left(\sum_{\widehat{K}\subset\widehat{\omega}} \|\widehat{u}\|^2_{L^2(\widehat{K})}\right)^{1/2}\nonumber\\
%&=&2 
%\left(\sum_{K\subset\omega_i} 
%\frac{\vert K_0\vert}{\vert K\vert}\|u\|^2_{L^2(K)}\right)^{1/2}\nonumber\\
%&\lesssim& \|u\|^2_{L^2(\omega_i)}
\end{eqnarray}
Now, employing the fact that $\widehat{r}_{\color{black}\widehat{\omega}_i}$ is a projector on $\widehat{\omega}_i$ we have 
$$ \widehat{u}-\widehat{r}_{\color{black}\widehat{\omega}_i}(\widehat{u})=\widehat{u}-\widehat{\theta}- \widehat{r}_{\color{black}\widehat{\omega}_i}(\widehat{u}-\widehat{\theta}),$$
for $\widehat{\theta}\in \widehat{\Theta}(\widehat{\omega}_i)$
which implies
$$ \| \widehat{u}-\widehat{r}_{\color{black}\widehat{\omega}_i}(\widehat{u})\|_{L^2(\widehat{\omega}_i)}\leq 2 
\| \widehat{u}-\widehat{\theta}\|_{L^2(\widehat{\omega}_i)}.
$$
Assume $\widehat{\theta}$ is constant on $\widehat{\omega}_i$ and $\widehat{u}\in H^1(\widehat{\omega}_i)$, hence employing standard interpolation error estimate together with \eqref{norm2} we have
\begin{eqnarray}
 \| \widehat{u}-\widehat{r}_{\color{black}\widehat{\omega}_i}(\widehat{u})\|_{L^2(\widehat{\omega}_i)} &\lesssim& \vert \widehat{u}\vert_{H^1(\widehat{\omega}_i)}\nonumber\\
 &\lesssim& \left( \sum_{\widehat{K}\subset \widehat{\omega}_i}
 \vert \widehat{u}\vert^2_{H^1(\widehat{K})}
  \right)^{1/2}\nonumber\\
  &\lesssim & \left( 
\sum_{\widetilde{K}\subset \omega_i} \vert \widetilde{K}\vert^{-1} \|A_{\widetilde{K}}^{-T} \nabla u \|^2_{L^2(\widetilde{K})}  
  \right)^{1/2}.\nonumber
 \end{eqnarray}
 Combining \eqref{aux:lemma1} with the above inequality yields \eqref{eq1:lemma1}. On the other hand, using \eqref{norm2}-\eqref{norm3} we get \eqref{eq2:lemma1}.
\end{proof}
\begin{lemma}\label{lemma2}
For any $K\subset \omega_i$ there holds
\begin{equation}
\vert  u - r_i(u)\vert^2_{H^1(K)}\lesssim \sum_{\widetilde{K}\subset \omega_i} 
\sqrt{\frac{\lambda_{K,1} \lambda_{\widetilde{K},1}}{\lambda_{K,2}\lambda_{\widetilde{K},2}}}\vert u \vert^2_{H^1(\widetilde{K})}.
\end{equation}
\end{lemma}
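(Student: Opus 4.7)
My plan is to follow the same strategy as in the proof of Lemma~\ref{lemma1}, adapting it to the $H^1$-seminorm. I would take $K$ itself as the pivotal element of the patch $\omega_i$ for the transformation~\eqref{trafo:1}, so that $F_K$ maps $\omega_i$ onto a patch $\widehat\omega_i$ of regular isotropic polygons by Lemma~\ref{lem:regPerturbedMapping}. Using the right-hand inequality in~\eqref{norm3}, this yields
\[
|u-r_i(u)|_{H^1(K)}^2 \leq \sqrt{\lambda_{K,1}/\lambda_{K,2}}\; |\widehat u-\widehat r_{\widehat\omega_i}(\widehat u)|_{H^1(\widehat K)}^2,
\]
which accounts for the first anisotropy factor in the claim. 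The task is then to bound the projection error on the isotropic reference side by a sum of element-wise $H^1$-seminorms on $\omega_i$, each weighted by the second anisotropy factor.

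For the reference-side estimate I would start from the triangle inequality with an arbitrary constant $\widehat c\in\widehat\Theta(\widehat\omega_i)$, noting that $|\widehat u-\widehat c|_{H^1(\widehat K)}=|\widehat u|_{H^1(\widehat K)}$, while for $|\widehat r_{\widehat\omega_i}(\widehat u-\widehat c)|_{H^1(\widehat K)}$ I would combine a standard inverse inequality on the finite-dimensional space $\widehat\Theta(\widehat\omega_i)$ (its dimension is uniformly bounded thanks to Assumption~\ref{assumption} and Proposition~\ref{prop:BoundedNumOfNodesVSElems}) with the $L^2$-stability of the projector $\widehat r_{\widehat\omega_i}$ to obtain a bound of the form $\|\widehat u-\widehat c\|_{L^2(\widehat\omega_i)}$. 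Choosing $\widehat c$ as the best $L^2$ constant approximant on the (now isotropic) reference patch and invoking Lemma~\ref{lem:BestApproxByConst} on $\widehat\omega_i$, where the weighting $\alpha_K^{-1}\Lambda_K^{1/2}U_K^\top$ reduces to an isotropic scaling, delivers
\[
|\widehat u-\widehat r_{\widehat\omega_i}(\widehat u)|_{H^1(\widehat K)}^2 \lesssim |\widehat u|_{H^1(\widehat\omega_i)}^2 = \sum_{\widetilde K\subset\omega_i}|\widehat u|_{H^1(\widehat{\widetilde K})}^2,
\]
with $\widehat{\widetilde K}=F_K(\widetilde K)$.

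Finally, to recover the per-element factor $\sqrt{\lambda_{\widetilde K,1}/\lambda_{\widetilde K,2}}$, I would rewrite each term of the sum using the element-wise map $F_{\widetilde K}$ rather than the patch pivot $F_K$: the Jacobian of the composition $F_{\widetilde K}\circ F_K^{-1}$ and its inverse have uniformly bounded spectral norms, as a consequence of the anisotropic regularity condition~(b') in Definition~\ref{def:reg_anisotropic_mesh}, namely $\Lambda_{\widetilde K}\simeq\Lambda_K$ together with the crucial bound $\|I-R^{K,\widetilde K}\|_0\sqrt{\lambda_{K,1}/\lambda_{K,2}}<c_\phi$. Consequently, the $H^1$-seminorm of $\widehat u$ on $\widehat{\widetilde K}$ is equivalent, up to mesh-regularity constants, to the corresponding seminorm of $u$ mapped by $F_{\widetilde K}$, and a last application of~\eqref{norm3} with pivot $F_{\widetilde K}$ gives $|\widehat u|_{H^1(\widehat{\widetilde K})}^2\lesssim\sqrt{\lambda_{\widetilde K,1}/\lambda_{\widetilde K,2}}\;|u|_{H^1(\widetilde K)}^2$. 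Summing over $\widetilde K\subset\omega_i$ and combining with the first two displays produces the desired estimate. The main obstacle is exactly this last change of pivot: one must ensure that mapping a neighbouring element by $F_K$ does not inflate constants with the anisotropy of $K$, which is precisely what condition~(b') is designed to guarantee.
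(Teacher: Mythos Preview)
Your proposal is correct and follows essentially the same route as the paper's proof: map to the reference patch via~\eqref{norm3}, split with a constant $\widehat\theta\in\widehat\Theta(\widehat\omega_i)$, use the equivalence of norms on the finite-dimensional space $\widehat\Theta(\widehat\omega_i)$ together with the $L^2$-stability of $\widehat r_{\widehat\omega_i}$, and finish with a Poincar\'e-type bound on the isotropic reference patch. The only cosmetic difference is that the paper first enlarges $|\cdot|_{H^1(\widehat K)}$ to $|\cdot|_{H^1(\widehat\omega_i)}$ before splitting, whereas you split on $\widehat K$ and let the $L^2$-stability naturally bring in the full patch; both orderings yield the same estimate.

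Your final paragraph is in fact more careful than the paper. The paper simply writes ``by using~\eqref{norm3} on each $\widehat K\subset\widehat\omega_i$'' to recover the per-element factor $\sqrt{\lambda_{\widetilde K,1}/\lambda_{\widetilde K,2}}$, which tacitly assumes that one may pass from the pivot map $F_K$ to each element's own map $F_{\widetilde K}$ at no cost. You make this step explicit by noting that the Jacobian of $F_{\widetilde K}\circ F_K^{-1}$ is uniformly bounded thanks to condition~(b') of Definition~\ref{def:reg_anisotropic_mesh}, which is exactly the mechanism behind Lemma~\ref{lem:regPerturbedMapping}. One small remark: your appeal to Lemma~\ref{lem:BestApproxByConst} on the full patch $\widehat\omega_i$ is slightly informal, since that lemma is stated element-wise; what you actually need (and what the paper calls ``standard interpolation error estimate'') is a Poincar\'e inequality on the connected, uniformly shape-regular reference patch, which is indeed standard.
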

\begin{proof}
By using \eqref{norm3} and taking $\widehat{\theta}$ constant on $\widehat{\omega}_i$, employing the equivalence of all norms on the finite dimensional space $\widehat{\Theta}(\widehat{\omega}_i)$, the fact that  $\widehat{r}_{\color{black}\widehat{\omega}_i}$ is a projection on $\widehat{\omega}_i$ and standard interpolation error estimate  we obtain
\begin{eqnarray}
\vert  u - r_i(u)\vert_{H^1(K)} &\leq& \left( \frac{\lambda_{K,1}}{\lambda_{K,2}}\right)^{1/4} \vert \widehat{u}-\widehat{r}_{\color{black}\widehat{\omega}_i}(\widehat{u})\vert_{H^1(\widehat{K})}\leq \left( \frac{\lambda_{K,1}}{\lambda_{K,2}}\right)^{1/4} \vert \widehat{u}-\widehat{r}_{\color{black}\widehat{\omega}_i}(\widehat{u})\vert_{H^1(\widehat{\omega}_i)}\nonumber\\
&\leq&  
\left( \frac{\lambda_{K,1}}{\lambda_{K,2}}\right)^{1/4}
\left( \vert \widehat{u}-\widehat{\theta} \vert_{H^1(\widehat{\omega}_i)} +   
\vert \widehat{\theta}-\widehat{r}_{\color{black}\widehat{\omega}_i}(\widehat{u}) \vert_{H^1(\widehat{\omega}_i)} 
\right)\nonumber\\
&\lesssim& 
 \left( \frac{\lambda_{K,1}}{\lambda_{K,2}}\right)^{1/4}
\left( \vert \widehat{u}-\widehat{\theta} \vert_{H^1(\widehat{\omega}_i)} +   
\| \widehat{\theta}-\widehat{r}_{\color{black}\widehat{\omega}_i}(\widehat{u}) \|_{L^2(\widehat{\omega}_i)} 
\right)\nonumber\\
&\lesssim& 
 \left( \frac{\lambda_{K,1}}{\lambda_{K,2}}\right)^{1/4}
\left( \vert \widehat{u}-\widehat{\theta} \vert_{H^1(\widehat{\omega}_i)} +   
\| \widehat{r}_{\color{black}\widehat{\omega}_i}(\widehat{\theta}- \widehat{u}) \|_{L^2(\widehat{\omega}_i)} 
\right)\nonumber\\
&\lesssim& 
 \left( \frac{\lambda_{K,1}}{\lambda_{K,2}}\right)^{1/4}
\left( \vert \widehat{u}-\widehat{\theta} \vert_{H^1(\widehat{\omega}_i)} +   
\| \widehat{\theta}- \widehat{u} \|_{L^2(\widehat{\omega}_i)} 
\right)\nonumber\\
&\lesssim& 
 \left( \frac{\lambda_{K,1}}{\lambda_{K,2}}\right)^{1/4}
\vert \widehat{u}\vert_{H^1(\widehat{\omega}_i)}\nonumber.
\end{eqnarray}
By using \eqref{norm3} on each $\widehat{K}\subset\widehat{\omega}_i$ we get the thesis.
\end{proof}
{\color{black} We are now ready to introduce the quasi-interpolation operator. For simplicity of exposition, we first consider the case where no boundary conditions are imposed on the boundary of $\Omega$. To this aim, we introduce the  global lowest order virtual element space $V_h\subset H^1(\Omega)$, which is defined as $V_{h,0}$ except for the conditions imposed on the boundary vertexes (cf. \eqref{VEM:global}).}
The quasi-interpolation of lowest order $\mathfrak I_C: H^1(\Omega)\to V_h$ is defined as
\begin{equation}\label{def:quasi-interpolant}
 (\mathfrak I_C v)(x) = \sum_{i=1}^N [r_i(v)]({\sf v}_i)\;\varphi_i(x)
\end{equation}
where $r_i=r_{\omega_i}$  and $\varphi_i\in V_h$ is the global virtual element basis function with $\varphi_i({\sf v}_j)=\delta_{i,j}$, $i,j=1,\ldots, N.$

We first observe that the following inverse inequality holds.
\begin{lemma} \label{Lemma:inverse}
For any $v\in V_h(K)$ there holds
\begin{equation}
\vert v\vert_{H^1(K)}\lesssim \frac{\sqrt{\lambda_{K,1}}}{\lambda_{K,2}} \| v\|_{L^2(K)}.
\end{equation}
\end{lemma}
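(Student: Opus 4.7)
The plan is to pass to the isotropic reference configuration $\widehat K$ via the affine map $F_K$ of~\eqref{trafo:1}, apply a standard VEM inverse estimate there, and then pull the bound back to $K$ through the norm identities of Lemma~\ref{lem:MapH1norm}, with a final simplification exploiting $|\widehat K|=1$ and $\lambda_{K,1}\geq\lambda_{K,2}$.

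First I would set $\widehat v = v\circ F_K^{-1}$. Combining \eqref{norm1} with the upper bound in \eqref{norm3} I immediately obtain
\[
|v|_{H^1(K)}^2 \leq \sqrt{\lambda_{K,1}/\lambda_{K,2}}\,|\widehat v|_{H^1(\widehat K)}^2
\quad\text{and}\quad
\|\widehat v\|_{L^2(\widehat K)}^2 = |K|^{-1}\|v\|_{L^2(K)}^2,
\]
so the task reduces to establishing the isotropic estimate $|\widehat v|_{H^1(\widehat K)}\lesssim\|\widehat v\|_{L^2(\widehat K)}$ on the reference polygon.

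By Definition~\ref{def:reg_anisotropic_mesh}(a') together with Assumption~\ref{assumption}, $\widehat K$ is an isotropically regular polygon, and by construction $|\widehat K|=1$, so $h_{\widehat K}\simeq 1$; moreover, the image space $V_h(K)\circ F_K^{-1}$ is finite-dimensional with dimension uniformly bounded by Proposition~\ref{prop:BoundedNumOfNodesVSElems}. I would then invoke the classical lowest-order VEM inverse estimate on shape-regular polygons to obtain $|\widehat v|_{H^1(\widehat K)}\lesssim \|\widehat v\|_{L^2(\widehat K)}$, and chaining this with the two relations above yields
\[
|v|_{H^1(K)}^2 \lesssim \sqrt{\lambda_{K,1}/\lambda_{K,2}}\,|K|^{-1}\,\|v\|_{L^2(K)}^2.
\]
To simplify the prefactor, I would use~\eqref{eq:Alpha} to write $|K|^{-1}=\alpha_K^2/\sqrt{\lambda_{K,1}\lambda_{K,2}}$, which, combined with $\alpha_K\simeq 1$ (a consequence of $|\widehat K|=1$ and the shape regularity of $\widehat K$), gives $\sqrt{\lambda_{K,1}/\lambda_{K,2}}\,|K|^{-1}\simeq \lambda_{K,2}^{-1}\leq \lambda_{K,1}/\lambda_{K,2}^2$, where the last inequality uses $\lambda_{K,1}\geq \lambda_{K,2}$. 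Taking square roots delivers the stated bound.

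The main obstacle is justifying the inverse estimate on~$\widehat K$: strictly speaking $\widehat v\notin V_h(\widehat K)$, because the Laplace operator is not preserved by the anisotropic affine map $F_K$, so $\widehat v$ solves an elliptic equation with element-dependent anisotropic coefficients rather than being harmonic. Uniformity of the constant in the inverse estimate therefore has to be argued from the fact that the image space has a uniformly bounded dimension (Proposition~\ref{prop:BoundedNumOfNodesVSElems}), retains piecewise linear traces on the edges of $\widehat K$, and is supported on a uniformly shape-regular polygon of unit area; a standard scaling and norm-equivalence argument, of the type routinely employed in VEM to derive inverse estimates, then supplies the required inequality with a constant depending only on the mesh regularity parameters.
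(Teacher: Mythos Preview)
Your approach diverges from the paper's and contains a genuine gap precisely at the step you flag as the main obstacle. The inverse estimate $|\widehat v|_{H^1(\widehat K)}\lesssim\|\widehat v\|_{L^2(\widehat K)}$ on the reference element cannot be obtained by a ``standard norm-equivalence argument'': the image space $V_h(K)\circ F_K^{-1}$ is not fixed but depends on $K$ through the anisotropy ratio. Indeed, $\widehat v$ solves $\widehat\nabla\cdot(A_KA_K^\top\widehat\nabla\widehat v)=0$ with $A_KA_K^\top=\alpha_K^2\Lambda_K^{-1}$, an elliptic operator whose coefficient ratio equals $\lambda_{K,1}/\lambda_{K,2}$ and is therefore unbounded along the mesh family. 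Norm equivalence on finite-dimensional spaces produces constants that depend on the space, and here the family of spaces is non-compact. More concretely, the VEM inverse estimate you invoke hinges on the minimum-Dirichlet-energy property of \emph{harmonic} functions: one replaces $v$ by a piecewise linear extension $w$ of its boundary trace and uses $|v|_{H^1}\le|w|_{H^1}$. On $\widehat K$ this fails because $\widehat v$ minimises the anisotropic energy $\int_{\widehat K} A_KA_K^\top\widehat\nabla\widehat w\cdot\widehat\nabla\widehat w$, not $|\cdot|_{H^1(\widehat K)}^2$; comparing the two energies introduces an extra factor $\lambda_{K,1}/\lambda_{K,2}$, which destroys the claimed uniform bound. (A minor additional issue: you invoke $\alpha_K\simeq 1$, which is Assumption~\ref{assumption:2} and not a hypothesis of the lemma.)

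The paper circumvents this difficulty by staying on the physical element $K$, where $v$ \emph{is} harmonic. It introduces a sub-triangulation $\Tau_K$ of $K$ into (possibly anisotropic) triangles and a projector $Q_K:V_h(K)\to S_1(\Tau_K)$ matching the boundary trace; the harmonic minimum principle then gives $\|\nabla v\|_{L^2(K)}\le\|\nabla Q_Kv\|_{L^2(K)}$, after which the classical finite-element inverse inequality on each anisotropic triangle $T_E$ yields $\|\nabla Q_Kv\|_{L^2(T_E)}\lesssim\lambda_{2,T_E}^{-1/2}\|Q_Kv\|_{L^2(T_E)}$. A weighted trace estimate links $\|Q_Kv\|_{L^2(K)}$ back to $\|v\|_{L^2(K)}$. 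The anisotropy enters only through $\lambda_{2,T_E}\simeq\lambda_{K,2}$ and the edge length $h_{E,\max}\simeq\sqrt{\lambda_{K,1}}$, producing the stated bound without ever needing an inverse estimate on a variable space.
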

\begin{proof}
We follow the steps of the proofs of  Lemma 3.2, Lemma 3.4 and Theorem 3.6 in 
\cite{Chen:Calcolo}. We report here the details only for completeness. 
We introduce a sub-triangulation $\Tau_K$ of $K$ made of (possibly anisotropic) triangles having at least one edge coinciding with one of the edges of  the polygon $K$. We denote by $S_1(\Tau_K)$ the space of piecewise continuous linear finite element functions on $\Tau_K$ and set 
$S_1^0(\Tau_K)=S_1(\Tau_K)\cap H^1_0(K)$. We introduce the projector $Q_K:V_h(K)\to S_1(\Tau_K)$
defined as
$$Q_k v\vert_{\partial K}= v\vert_{\partial K}\qquad 
(Q_k v, \phi)_{L^2(K)}=(v,\phi)_{L^2(K)}\quad\forall \phi \in S_1^0(\Tau_K).$$
The following splitting will be employed in the sequel
$$ Q_k v =v_{\partial,h}+v_{0,h}$$
where $v_{\partial,h}\in S_1(\Tau_K)$ with $v_{\partial,h}\vert_{\partial K}=v\vert_{\partial K}$ (we recall that $v\vert_{\partial K}$ is piecewise linear as $v\in V_h(K)$) and $v_{0,h}\in S_1^0(\Tau_K)$ defined as $v_{0,h}=Q_k v - v_{\partial,h}$.
We observe that it holds
$$(Q_K v, Q_K v)_{L^2(K)}=  (Q_K v,v_{\partial,h}) + (Q_K v,v_{0,h})=I_1+I_2. $$
Obviously, we have 
$$ 
I_1\leq \|Q_K v\|_{L^2(K)}\| v_{\partial,h}\|_{L^2(K)}
$$
and 
$$ 
 I_2=(v,v_{0,h})_{L^2(K)}\leq  \|v\|_{L^2(K)} \|v_{0,h}\|_{L^2(K)}
\leq \|v\|_{L^2(K)}\left (
\|Q_K v\|_{L^2(K)}
+ 
\|v_{\partial,h}\|_{L^2(K)}\right)
$$
from which it follows
$$
\|Q_K v\|_{L^2(K)}^2\leq \|Q_K v\|_{L^2(K)}\|v_{\partial,h}\|_{L^2(K)}
+ 
\|Q_K v\|_{L^2(K)}\|v\|_{L^2(K)}
+ 
\|v\|_{L^2(K)}\|v_{\partial,h}\|_{L^2(K)},$$
which implies
\begin{equation}\label{aux:chen:1}
\|Q_K v\|_{L^2(K)}\lesssim \|v\|_{L^2(K)}+\|v_{\partial,h}\|_{L^2(K)}.
\end{equation}
Let us now estimate the term $\|v_{\partial,h} \|_{L^2(K)}$. We observe that it holds
\begin{eqnarray}
\|v_{\partial,h} \|_{L^2(K)}^2&\simeq& \sum_{E\subset \partial K} \|v_{\partial,h} \|^2_{L^2(T_E)}\nonumber\\
&\lesssim & \sum_{E\subset \partial K} h_E \|v_{\partial,h}\|^2_{L^2(E)}\lesssim 
\sum_{E\subset \partial K} h_E \|v\|^2_{L^2(E)}.\nonumber
\end{eqnarray}
where $T_E\in \Tau(K)$ is the triangle having $E$ as an edge and $h_E$ is the diameter of $E$.
Employing the weighted trace estimate 
$$h_E \|v\|_{L^2(E)}^2 \lesssim \varepsilon^{-2} \| v\|^2_{L^2(T_E)} + \varepsilon^2 h_E^2 \|\nabla v\|^2_{L^2(T_E)},$$
we obtain, with $h_{E,\max}=\max_{E\subset \partial K} h_E$, the following
\begin{equation}
\|v_{\partial,h} \|_{L^2(K)}\lesssim \varepsilon^{-1}\|v\|_{L^2(K)} + \varepsilon h_{E,\max} \|\nabla v\|_{L^2(K)},
\end{equation}
which yields
\begin{equation}\label{aux:chen:2}
\|Q_K v\|_{L^2(K)}\lesssim (1+ \varepsilon^{-1})\|v\|_{L^2(K)} + \varepsilon h_{E,\max} \|\nabla v\|_{L^2(K)}.
\end{equation}
Let $v\in V_h(K)$ then it clearly holds
$$ \|\nabla v \|_{L^2(K)}=\inf_{w\in H^1(K):~w\vert_{\partial K}=v\vert_{\partial K}}  \|\nabla w \|_{L^2(K)},$$
which implies, by employing standard inverse inequality on (anisotropic) triangles the following 
\begin{align*}
\|\nabla v\|_{L^2(K)}^2 &\leq \|\nabla Q_K v\|_{L^2(K)}^2
=\sum_{T_E\in \Tau_K}  \|\nabla Q_K v\|_{L^2(T_E)}^2
\lesssim  \sum_{T_E\in \Tau_K}  \frac{1}{\lambda_{2,T_E}}\|Q_K v\|_{L^2(T_E)}^2\\
&\lesssim   \frac{1}{\lambda_{2,\Tau_K}}\|Q_K v\|_{L^2(K)}^2,
\end{align*}
where $\lambda_{2,\Tau_K}=\min_{T_E\in \Tau_K} \lambda_{2,T_E}$ and $\lambda_{2,T_E}$ is defined analogously to $\lambda_{2,K}$ (here $K=T_E$), cf \eqref{eq:eigenvalues}.
The above inequality combined with \eqref{aux:chen:2} yields
\begin{equation}
\|\nabla v\|_{L^2(K)}\lesssim \frac{1}{\sqrt{\lambda_{2,\Tau_K}}}\left( \left(1+\frac{1}{\varepsilon}\right)\|v\|_{L^2(K)}+ \varepsilon h_{E,\max} \|\nabla v\|_{L^2(K)}\right).
\end{equation}
By choosing {\color{black} $\varepsilon={\sqrt{\lambda_{2,\Tau_K}}}/{(2h_{E,\max})}$} we obtain
\begin{equation}
\|\nabla v\|_{L^2(K)}\lesssim \frac{\sqrt{\lambda_{2,\Tau_K}}+ h_{E,\max}}{\lambda_{2,\Tau_K}} \|v\|_{L^2(K)}.
\end{equation}
As $\sqrt{\lambda_{K,1}}\simeq h_{E,\max}$ and 
$\lambda_{2,\Tau_K}\simeq \lambda_{2,K}$ we get the thesis.
\end{proof}

\begin{theorem}\label{Thm:L2}
For any $K\subset \mathcal{K}_h$ there hold
\begin{equation}
\| u - \mathfrak I_C u\|^2_{L^2(K)} \lesssim \sum_{i=1}^{n_K}\sum_{\widetilde{K}\subset \omega_i} \frac{\vert K\vert}{\vert \widetilde{K}\vert} \|A_{\widetilde{K}}^{-T}\nabla u\|^2_{L^2(\widetilde{K})},
\end{equation}
or, written in an alternative way, 
\begin{equation}
\| u - \mathfrak I_C u\|^2_{L^2(K)} \lesssim \vert K\vert 
 \sum_{i=1}^{n_K}
 \sum_{\widetilde{K}\subset \omega_i} \sqrt{\frac{\lambda_{\widetilde{K},1}}{\lambda_{\widetilde{K},2}}}\vert u \vert^2_{H^1(\widetilde{K})},
\end{equation}
where $n_K$ denotes the number of vertices of $K$. 
\end{theorem}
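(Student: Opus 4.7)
The plan is to combine the partition-of-unity structure of the VEM basis on $K$ with Lemma~\ref{lemma1} and a Poincar\'e-type control on the shift by the nodal value $r_i(u)({\sf v}_i)$.

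First I would observe that, restricted to $K$, the basis $\{\varphi_i\}_{i=1}^{n_K}$ indexed by the vertices of $K$ forms a partition of unity (the constant $1$ belongs to $V_h(K)$ with all unit nodal values) and each $\varphi_i$ satisfies $\|\varphi_i\|_{L^\infty(K)}\le 1$ by the maximum principle applied to the harmonic VEM basis function. Writing
\[
 u-\mathfrak I_C u \;=\; \sum_{i=1}^{n_K}\bigl(u-r_i(u)({\sf v}_i)\bigr)\,\varphi_i \quad\text{on }K,
\]
using discrete Cauchy--Schwarz and the uniform bound on $n_K$ from Proposition~\ref{prop:BoundedNumOfNodesVSElems}, reduces the target estimate to
\[
 \|u-\mathfrak I_C u\|_{L^2(K)}^2 \;\lesssim\; \sum_{i=1}^{n_K}\|u-r_i(u)({\sf v}_i)\|_{L^2(K)}^2.
\]

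Next I would split $u-r_i(u)({\sf v}_i) = (u-r_i(u)) + (r_i(u)-r_i(u)({\sf v}_i))$. The first summand is bounded directly by the right-hand side of the theorem via Lemma~\ref{lemma1}. For the second I would transport to the reference patch $\widehat\omega_i=F_{K_i^\ast}(\omega_i)$ using Lemma~\ref{lem:MapH1norm}:
\[
 \|r_i(u)-r_i(u)({\sf v}_i)\|_{L^2(K)}^2 \;\le\; |K|\,\bigl\|\widehat r_{\widehat\omega_i}(\widehat u)-\widehat r_{\widehat\omega_i}(\widehat u)(\widehat{\sf v}_i)\bigr\|_{L^2(\widehat\omega_i)}^2,
\]
where $\widehat{\sf v}_i=F_{K_i^\ast}({\sf v}_i)$. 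On the finite-dimensional space $\widehat\Theta(\widehat\omega_i)$, whose dimension is uniformly bounded by Assumption~\ref{assumption}, the seminorm $\widehat w\mapsto\|\widehat w-\widehat w(\widehat{\sf v}_i)\|_{L^2(\widehat\omega_i)}$ has the constants as its kernel, so equivalence of norms in finite dimension gives $\|\widehat w-\widehat w(\widehat{\sf v}_i)\|_{L^2(\widehat\omega_i)}\lesssim|\widehat w|_{H^1(\widehat\omega_i)}$. Applying this to $\widehat w=\widehat r_{\widehat\omega_i}(\widehat u)$ and combining an inverse inequality within $\widehat\Theta(\widehat\omega_i)$, the $L^2$-stability of the orthogonal projection $\widehat r_{\widehat\omega_i}$ (which reproduces constants), and a Poincar\'e inequality on the regular isotropic patch $\widehat\omega_i$ (whose regularity is supplied by Lemma~\ref{lem:regPerturbedMapping}), I would obtain $|\widehat r_{\widehat\omega_i}(\widehat u)|_{H^1(\widehat\omega_i)} \lesssim |\widehat u|_{H^1(\widehat\omega_i)}$.

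Finally, I would pull the $H^1$-seminorm back to the physical patch element by element via identity~\eqref{norm2} and invoke the neighbour-similarity condition in Definition~\ref{def:reg_anisotropic_mesh}(b') to replace $A_{K_i^\ast}$ by $A_{\widetilde K}$ on each $\widetilde K\subset\omega_i$, producing a bound of exactly the form in Lemma~\ref{lemma1}. Summing over $i=1,\dots,n_K$ then yields the first inequality; the second follows by one more application of~\eqref{norm2}--\eqref{norm3}. The main obstacle I anticipate is the third step: uniformly controlling both the seminorm equivalence on $\widehat\Theta(\widehat\omega_i)$ and the Poincar\'e constant on $\widehat\omega_i$ across the mesh, which must be deduced from the perturbed regularity of the mapped patch (Lemma~\ref{lem:regPerturbedMapping}) together with the uniform dimension bound on $\widehat\Theta(\widehat\omega_i)$.
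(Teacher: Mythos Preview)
Your argument is correct, but it takes a genuinely different route from the paper's.

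The paper exploits the fact that $r_1(u)$, being an element of $\widehat\Theta(\widehat\omega_1)$ pulled back, restricts to a function in $V_h(K)$ on $K$; hence $r_1(u)|_K=\sum_{i=1}^{n_K}[r_1(u)]({\sf v}_i)\varphi_i|_K$, which gives the splitting
\[
 (u-\mathfrak I_C u)\big|_K = (u-r_1(u))\big|_K - \sum_{i=2}^{n_K}[r_i(u)-r_1(u)]({\sf v}_i)\,\varphi_i\big|_K.
\]
The first piece is handled by Lemma~\ref{lemma1}, and for the second the paper bounds $\|r_i(u)-r_1(u)\|_{L^\infty(K)}$ by passing to the reference, using only the $L^\infty\!\to L^2$ norm equivalence on $\widehat\Theta(\widehat\omega)$ together with a triangle inequality to reduce again to $\|\widehat u-\widehat r_i(\widehat u)\|_{L^2(\widehat K)}$ and $\|\widehat u-\widehat r_1(\widehat u)\|_{L^2(\widehat K)}$, i.e.\ back to Lemma~\ref{lemma1}. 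No $H^1$-stability of $\widehat r_i$ is ever needed.

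Your partition-of-unity decomposition is more symmetric in the vertices but forces you to control the nodal shift $r_i(u)-r_i(u)({\sf v}_i)$, for which you invoke $H^1$-stability of $\widehat r_{\widehat\omega_i}$ via an inverse inequality plus Poincar\'e on $\widehat\omega_i$. That chain works and is essentially the machinery already present in the proof of Lemma~\ref{lemma2}, so the uniformity concerns you flag are indeed handled by Lemma~\ref{lem:regPerturbedMapping} and the bounded patch cardinality. A shorter variant of your second step: on $\widehat\Theta(\widehat\omega_i)/\text{constants}$ the seminorms $\widehat w\mapsto\|\widehat w-\widehat w(\widehat{\sf v}_i)\|_{L^2}$ and $\widehat w\mapsto\inf_c\|\widehat w-c\|_{L^2}$ are equivalent, and the latter is immediately $\leq\inf_c\|\widehat u-c\|_{L^2}\lesssim|\widehat u|_{H^1(\widehat\omega_i)}$ by the $L^2$-stability of $\widehat r_{\widehat\omega_i}$ and Poincar\'e, bypassing the $H^1$ bound on $\widehat r_{\widehat\omega_i}(\widehat u)$ altogether. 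The paper's route buys economy (only $L^2$ bounds from Lemma~\ref{lemma1} are used); yours buys a decomposition that does not single out one vertex and is perhaps closer in spirit to classical Cl\'ement/Scott--Zhang arguments.
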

\begin{proof}
Denoting by $n_K$ the number of vertices of $K$ {\color{black} and by $\omega_i$ the patch of elements sharing the $i$-th vertex of $K$} we have
\begin{eqnarray}
(u - \mathfrak I_C u)\vert_K &=& u\vert_K - \sum_{i=1}^{n_K} [r_1(u)]({\sf v}_i)  \varphi_i\vert_K - \sum_{i=2}^{n_K} [r_i(u)-r_1(u)]({\sf v}_i)\varphi_i\vert_K\nonumber\\
&=& 
(u - r_1(u))\vert_K  - \sum_{i=2}^{n_K} [r_i(u)-r_1(u)]({\sf v}_i)\varphi_i\vert_K,\nonumber
\end{eqnarray}
where in the last step we employed the fact that $r_1(u)$ is a virtual element function defined on the patch $F_K^{-1}(\widehat{\omega})$ and $K\subset F_K^{-1}(\widehat{\omega})$. It follows
\begin{eqnarray}
\| u - \mathfrak I_C u\|_{L^2(K)} &\leq& \| u - r_1(u) \|_{L^2(K)} 
+ 
\sum_{i=2}^{n_K} \vert [r_i(u)-r_1(u)]({\sf v}_i)\vert \|\varphi_i\|_{L^2(K)}
\nonumber\\
&\leq& \| u - r_1(u) \|_{L^2(K)} 
+ 
\vert K \vert^{1/2}
\sum_{i=2}^{n_K} \| r_i(u)-r_1(u)\|_{L^\infty(K)} .\nonumber
\end{eqnarray}
To conclude, it is enough to employ Lemma \ref{lemma1} in combination with the following bound
\begin{eqnarray}
\| r_i(u)-r_1(u)\|_{L^\infty(K)}&=&\| \widehat{r}_i(\widehat{u})-\widehat{r}_1(\widehat{u})\|_{L^\infty(\widehat{K})}
\nonumber\\
&\lesssim& \| \widehat{r}_i(\widehat{u})-\widehat{r}_1(\widehat{u})\|_{L^2(\widehat{K})}\lesssim 
\| \widehat{u}-\widehat{r}_i(\widehat{u})\|_{L^2(\widehat{K})}+
\| \widehat{u}-\widehat{r}_1(\widehat{u})\|_{L^2(\widehat{K})}\nonumber\\
&\lesssim& 
\vert K\vert^{-1/2}\left(
\| {u}-{r}_i({u})\|_{L^2({K})}+
\| {u}-{r}_1({u})\|_{L^2({K})}\right),
\end{eqnarray}
where in the first inequality we employed the fact that all norms are equivalent on the finite dimensional space $\widehat{\Theta}(\widehat{\omega})$. 
\end{proof}
\begin{corollary}
There holds
\begin{equation}\label{proj:L2-estimate}
\| u - \mathfrak I_C u\|^2_{L^2(K)} \lesssim \sum_{i=1}^{n_K}\sum_{\widetilde{K}\subset \omega_i} \|A_{\widetilde{K}}^{-T}\nabla u\|^2_{L^2(\widetilde{K})},
\end{equation}
where $n_K$ denotes the number of vertices of $K$. 
\end{corollary}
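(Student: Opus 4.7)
The plan is to reduce the corollary to Theorem~\ref{Thm:L2} by showing that for every vertex $\sf{v}_i$ of $K$ and every $\widetilde{K}\subset\omega_i$ the ratio $|K|/|\widetilde{K}|$ is controlled by a constant depending only on the regularity parameters. Once this is established, each factor $|K|/|\widetilde{K}|$ on the right-hand side of Theorem~\ref{Thm:L2} may be absorbed into the hidden constant, yielding the desired estimate.

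To bound $|K|/|\widetilde{K}|$, I would exploit the anisotropic transformation $F_K$. First, a direct computation from $A_K=\alpha_K\Lambda_K^{-1/2}U_K^{\top}$ and $\alpha_K^2=\sqrt{\lambda_{K,1}\lambda_{K,2}}/|K|$ gives $\det(A_K)=1/|K|$, so that $|F_K(\widetilde{K})|=|\widetilde{K}|/|K|$. In particular, using Lemma~\ref{lem:PropTransform}, we have $|F_K(K)|=|\widehat K|=1$. Thus the claim $|K|/|\widetilde{K}|\lesssim 1$ becomes equivalent to a uniform lower bound on $|F_K(\widetilde{K})|$.

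Such a lower bound is delivered by Lemma~\ref{lem:regPerturbedMapping}: every mapped element $F_K(\widetilde{K})$ (with $\widetilde{K}\subset\omega_i\subset\omega$, where $\omega$ is the patch of any vertex of $K$) is regular in the isotropic sense of Definition~\ref{def:reg_isotropic_mesh}, with perturbed constants $\widetilde\sigma_{\mathcal K},\widetilde c_{\mathcal K}$ that are still uniform in $h$. Combined with Proposition~\ref{prop:BoundedNumOfNodesVSElems}, which bounds the cardinality of the patch uniformly, a standard chaining argument for regular isotropic meshes implies that neighbouring (vertex-sharing) elements of the mapped patch have comparable areas. Since $|F_K(K)|=1$, we conclude $|F_K(\widetilde{K})|\simeq 1$ for all $\widetilde{K}\subset\omega_i$, hence $|K|/|\widetilde{K}|\lesssim 1$.

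The comparability-of-sizes step is the only nontrivial point; conceptually it rests on the fact that in an isotropic regular polygonal mesh the condition $h_{K'}\leq c_{\mathcal K}h_E$ on each edge $E\subset\partial K'$ forces $h_{K'}\simeq h_{K''}$ for edge-neighbours, and the vertex patch is connected with a bounded number of elements. Substituting $|K|/|\widetilde{K}|\lesssim 1$ into the statement of Theorem~\ref{Thm:L2} immediately gives \eqref{proj:L2-estimate}, concluding the proof.
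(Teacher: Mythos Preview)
Your proposal is correct and follows the same approach as the paper, which simply invokes Theorem~\ref{Thm:L2} and asserts that the mesh regularity assumption yields $|K|/|\widetilde K|\lesssim 1$. You have supplied a detailed justification of this area-ratio bound (via $\det A_K=1/|K|$, Lemma~\ref{lem:regPerturbedMapping}, and a chaining argument on the mapped patch) that the paper leaves implicit.
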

\begin{proof}
Thanks to the mesh regularity assumption it is possible to prove that  $\frac{\vert K\vert}{\vert \widetilde{K}\vert} $ is bounded.
\end{proof}
{\color{black}The following result will be obtained  under an assumption on the behaviour 
of the constant $\alpha_K$ appearing in the map \eqref{trafo:1}.
\begin{assumption}\label{assumption:2}
We assume that it holds $\alpha_K\simeq 1$ for every $K\in \mathcal{K}_h$, uniformly in $h$.
\end{assumption}
For a numerical exploration on the validity of Assumption~\ref{assumption:2} see \cite[Section 6.2]{Weisser:2019}. For future use, we note that the above assumption implies $\vert K\vert\simeq\sqrt{\lambda_{K,1}\lambda_{K,2}}$}.
\begin{theorem}\label{Thm:H1}
Under Assumption \ref{assumption:2}, for any $K\subset \omega_i$ there holds
{
\begin{equation}\label{interp:H1-estimate}
\vert u - \mathfrak I_C u\vert_{H^1(K)} \lesssim 
%\left(
\frac{\lambda_{K,1}}{\lambda_{K,2}}
%\right)
\, \vert u\vert_{H^1(\omega_K)},
\end{equation}
}
being $\omega_K$ the patch of polygons $K'$ such that $\overline{K'} \cap \overline{K}\not=\emptyset$.
\end{theorem}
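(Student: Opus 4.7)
The plan is to mirror the decomposition used in the proof of Theorem \ref{Thm:L2}, taking the $H^1$ seminorm instead of the $L^2$ norm, and then combine Lemma \ref{lemma2} with the anisotropic inverse inequality of Lemma \ref{Lemma:inverse} to pay the proper price in the anisotropy quotient $\lambda_{K,1}/\lambda_{K,2}$.

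First, I write
\[
 (u-\mathfrak I_C u)|_K = (u - r_1(u))|_K - \sum_{i=2}^{n_K}\bigl[r_i(u)-r_1(u)\bigr]({\sf v}_i)\,\varphi_i|_K,
\]
exploiting that $r_1(u)$ is a virtual element function on the patch $F_K^{-1}(\widehat\omega_1)$ containing $K$. Taking the $H^1$ seminorm and applying the triangle inequality produces one ``consistency'' term $|u-r_1(u)|_{H^1(K)}$, which is controlled directly by Lemma \ref{lemma2}, and a finite number (by Proposition \ref{prop:BoundedNumOfNodesVSElems}) of ``jump'' terms $|[r_i(u)-r_1(u)]({\sf v}_i)|\,|\varphi_i|_{H^1(K)}$.

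For the consistency term, Lemma \ref{lemma2} together with condition (b') of Definition \ref{def:reg_anisotropic_mesh} gives $\lambda_{\widetilde K,1}/\lambda_{\widetilde K,2}\simeq\lambda_{K,1}/\lambda_{K,2}$ for any $\widetilde K\subset\omega_1$, so the square-root factor simplifies to $\lambda_{K,1}/\lambda_{K,2}$ and one obtains $|u-r_1(u)|_{H^1(K)}\lesssim (\lambda_{K,1}/\lambda_{K,2})^{1/2}|u|_{H^1(\omega_1)}$, which is already dominated by the target bound. For the jump terms, I argue as in the proof of Theorem \ref{Thm:L2}: using equivalence of norms on the finite-dimensional space $\widehat\Theta(\widehat\omega_i)$ and the projector property of $\widehat r_{\widehat\omega_i}$, the pointwise value $|[r_i(u)-r_1(u)]({\sf v}_i)|$ is controlled by $|K|^{-1/2}\bigl(\|u-r_i(u)\|_{L^2(K)}+\|u-r_1(u)\|_{L^2(K)}\bigr)$, and Lemma \ref{lemma1}\,\eqref{eq2:lemma1} then bounds this by $(\lambda_{K,1}/\lambda_{K,2})^{1/4}|u|_{H^1(\omega_K)}$ after using mesh regularity again.

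The key step—and the place where the full factor $\lambda_{K,1}/\lambda_{K,2}$ is produced—is then the anisotropic inverse inequality of Lemma \ref{Lemma:inverse} applied to $\varphi_i\in V_h(K)$: since $\|\varphi_i\|_{L^2(K)}\lesssim |K|^{1/2}$ and, by Assumption \ref{assumption:2}, $|K|\simeq\sqrt{\lambda_{K,1}\lambda_{K,2}}$, one gets
\[
 |\varphi_i|_{H^1(K)}\;\lesssim\;\frac{\sqrt{\lambda_{K,1}}}{\lambda_{K,2}}\,|K|^{1/2}\;\simeq\;\left(\frac{\lambda_{K,1}}{\lambda_{K,2}}\right)^{3/4}.
\]
Multiplying this by the $(\lambda_{K,1}/\lambda_{K,2})^{1/4}$ coming from the pointwise estimate yields exactly $\lambda_{K,1}/\lambda_{K,2}$, and summing over the $n_K$ vertices (uniformly bounded by Proposition \ref{prop:BoundedNumOfNodesVSElems}) completes the estimate.

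The main obstacle I anticipate is the bookkeeping of the anisotropic weights: one must verify that the $1/4$ power that arises from the $L^2$-estimate of $r_i(u)-r_1(u)$ combines precisely with the $3/4$ power from the inverse inequality (once $|K|$ is rewritten via Assumption \ref{assumption:2}) to give the stated factor, and that this dominates the $1/2$ power coming from the direct seminorm estimate. Once the algebra of exponents is correctly tracked and mesh regularity is invoked to replace $\lambda_{\widetilde K,1}/\lambda_{\widetilde K,2}$ by $\lambda_{K,1}/\lambda_{K,2}$ on the whole patch, the conclusion follows.
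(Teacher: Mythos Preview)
Your proposal is correct and follows essentially the same route as the paper's proof: the same decomposition $(u-\mathfrak I_C u)|_K=(u-r_1(u))|_K-\sum_{i\geq 2}[r_i(u)-r_1(u)]({\sf v}_i)\varphi_i|_K$, Lemma~\ref{lemma2} for the consistency term, the $L^\infty$-to-$L^2$ reduction on the reference patch combined with Lemma~\ref{lemma1}\,\eqref{eq2:lemma1} for the pointwise jump values, and Lemma~\ref{Lemma:inverse} together with Assumption~\ref{assumption:2} to obtain $|\varphi_i|_{H^1(K)}\lesssim(\lambda_{K,1}/\lambda_{K,2})^{3/4}$. The exponent bookkeeping you describe---$(1/4)+(3/4)=1$ for the jump terms, dominating the $(1/2)$ from the consistency term---is precisely what the paper carries out.
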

\begin{proof}
Following the proof of Theorem \ref{Thm:L2} and employing Lemma \ref{lemma2} we have
\begin{eqnarray}
\vert u-  \mathfrak I_C u\vert_{H^1(K)}& \leq&
\vert u - r_1(u) \vert_{H^1(K)}
+ 
\sum_{i=2}^{n_K} \vert [r_i(u)-r_1(u)]({\sf v}_i)\vert 
\vert \varphi_i\vert_{H^1(K)} \nonumber\\
&\leq&  \left(\sum_{\widetilde{K}\subset \omega_1} 
\sqrt{\frac{\lambda_{K,1} \lambda_{\widetilde{K},1}}{\lambda_{K,2}\lambda_{\widetilde{K},2}}}\vert u \vert^2_{H^1(\widetilde{K})}\right)^{1/2}+ \sum_{i=2}^{n_K} \vert [r_i(u)-r_1(u)]({\sf v}_i)\vert 
\vert \varphi_i\vert_{H^1(K)} \nonumber.
\end{eqnarray}
{ Now, we observe that, similarly to the proof of Theorem \ref{Thm:L2}, employing \eqref{norm2} together with Lemma \ref{lemma1} the following holds
\begin{eqnarray}
\| r_i(u)-r_1(u)\|_{L^\infty(K)}&=&\| \widehat{r}_i(\widehat{u})-\widehat{r}_1(\widehat{u})\|_{L^\infty(\widehat{K})}
\nonumber\\
&\lesssim& \vert \widehat{r}_i(\widehat{u})-\widehat{r}_1(\widehat{u})\vert_{L^2(\widehat{K})}\lesssim 
\vert \widehat{u}-\widehat{r}_i(\widehat{u})\vert_{L^2(\widehat{K})}+
\vert \widehat{u}-\widehat{r}_1(\widehat{u})\vert_{L^2(\widehat{K})}\nonumber\\
&\lesssim& 
\vert K \vert^{1/2}
\left(
\vert {u}-{r}_i({u})\vert_{L^2({K})}+
\vert {u}-{r}_1({u})\vert_{L^2({K})}\right)\nonumber\\
&\lesssim& 
\left(\sum_{\widetilde{K}\subset \omega_1\cup\omega_i} \sqrt{\frac{\lambda_{\widetilde{K},1}}{\lambda_{\widetilde{K},2}}}\vert u \vert^2_{H^1(\widetilde{K})}\right)^{1/2}.\nonumber
\end{eqnarray}
Employing Lemma \ref{Lemma:inverse} and the fact that $\|\varphi\|_{L^2(K)}\leq \vert K\vert^{1/2}$ we have, {after invoking Assumption~\ref{assumption:2}}, $\vert \varphi_i \vert_{H^1(K)}\lesssim 
\left(\frac{\lambda_{K,1}}{\lambda_{K,2}}\right)^{3/4}$. Thus, we get 

$$
 \sum_{i=2}^{n_K} \vert [r_i(u)-r_1(u)]({\sf v}_i)\vert 
\vert \varphi_i\vert_{H^1(K)} 
\lesssim
 \left(\frac{\lambda_{K,1}}{\lambda_{K,2}}\right)^{3/4}\sum_{i=1}^{n_K} \left(\sum_{\widetilde{K}\subset \omega_i} 
\sqrt{\frac{\lambda_{\widetilde{K},1}}{\lambda_{\widetilde{K},2}}}\vert u \vert^2_{H^1(\widetilde{K})}\right)^{1/2}.
$$

As a consequence of the anisotropic mesh regularity assumption we have 
$$ \lambda_{K,i}\simeq \lambda_{\widetilde{K},i}, ~i=1,2$$
which yields

$$
 \sum_{i=2}^{n_K} \vert [r_i(u)-r_1(u)]({\sf v}_i)\vert 
\vert \varphi_i\vert_{H^1(K)} 
\leq 
 %\left(
 \frac{\lambda_{K,1}}{\lambda_{K,2}}
% \right)
 \, \vert u \vert_{H^1(\omega_{K})}.
$$

Combining the above inequalities we obtain the thesis.}

\end{proof}
\begin{theorem}\label{Thm:trace}
Let $E\subset \partial K$ be an edge of $K \in \mathcal{K}_h$.
Then it holds 
\begin{equation}\label{proj:L^2-estimate-edge}
\| u - \mathfrak I_C u\|_{L^2(E)}\lesssim \frac{\vert E\vert^{1/2}}{\vert K \vert^{1/2}}
 \|A_{\widetilde{K}}^{-T} \nabla u\|_{L^2(\omega_E)}
\end{equation}
being $\omega_E$ the patch of elements $\widetilde{K}$ having non-empty intersect with $\overline{E}$.
\end{theorem}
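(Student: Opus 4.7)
The plan is to derive the edge estimate by combining the anisotropic trace inequality of Lemma \ref{lem:AnisotropicTraceInequality} with the $L^2$-bound \eqref{proj:L2-estimate} and an auxiliary weighted-gradient bound obtained by working in the reference configuration.

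First, I would apply Lemma \ref{lem:AnisotropicTraceInequality} to $v=u-\mathfrak I_C u$ on the element $K$ (noting that $A_K^{-T} = \alpha_K^{-1}\Lambda_K^{1/2}U_K^\top$) to obtain
\[
\|u-\mathfrak I_C u\|_{L^2(E)}^2 \lesssim \frac{|E|}{|K|}\Big(\|u-\mathfrak I_C u\|_{L^2(K)}^2 + \|A_K^{-T}\nabla(u-\mathfrak I_C u)\|_{L^2(K)}^2\Big).
\]
The first bulk term is controlled directly by \eqref{proj:L2-estimate}, so the task reduces to handling the weighted-gradient term. To avoid the adverse factor that would appear if Theorem \ref{Thm:H1} were used directly, I would rewrite this term via \eqref{norm2} as $|K|\,|\widehat u-\widehat{\mathfrak I_C u}|_{H^1(\widehat K)}^2$.

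On the reference patch $\widehat \omega_K = F_K(\omega_K)$, which by Lemma \ref{lem:regPerturbedMapping} consists of regular \emph{isotropic} polygonal elements, I would mimic the decomposition used in Theorems \ref{Thm:L2}--\ref{Thm:H1}, writing $\widehat u-\widehat{\mathfrak I_C u} = (\widehat u-\widehat r_1(\widehat u)) - \sum_{i\geq 2}[\widehat r_i(\widehat u)-\widehat r_1(\widehat u)](\widehat{\sf v}_i)\,\widehat\varphi_i$, and bound each piece through Lemma \ref{lemma2} together with the inverse estimate of Lemma \ref{Lemma:inverse}. Because the reference patch is isotropic, no $\lambda_{\cdot,1}/\lambda_{\cdot,2}$ factor appears and I would obtain $|\widehat u-\widehat{\mathfrak I_C u}|_{H^1(\widehat K)}^2 \lesssim |\widehat u|_{H^1(\widehat\omega_K)}^2$. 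Transporting back element-by-element via \eqref{norm2} (using $|\det A_K|=1/|K|$, which follows from Lemma \ref{lem:PropTransform} and \eqref{eq:Alpha}) and invoking condition (b') of Definition \ref{def:reg_anisotropic_mesh} to swap $A_K^{-T}$ for $A_{\widetilde K}^{-T}$ on each neighbouring $\widetilde K$, I would conclude that $\|A_K^{-T}\nabla(u-\mathfrak I_C u)\|_{L^2(K)}^2 \lesssim \|A_{\widetilde K}^{-T}\nabla u\|_{L^2(\omega_K)}^2$.

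Finally, to sharpen $\omega_K$ to $\omega_E$, I would exploit the fact that the VEM basis functions $\varphi_i$ vanish at every vertex not lying on $E$: on the edge $E$, only the $\varphi_i$ associated to the two endpoints of $E$ contribute to $\mathfrak I_C u|_E$, so only the patches $\omega_i$ attached to those endpoints, whose union is precisely $\omega_E$, enter the trace-localized version of the estimate. The main obstacle is the weighted $H^1$ bound of the previous step: a careless application of Theorem \ref{Thm:H1} would introduce a spurious $\lambda_{K,1}/\lambda_{K,2}$ penalty which, after multiplication by $|E|/|K|$, would destroy the anisotropic sharpness. The clean bound rests crucially on performing the $H^1$-analysis on the isotropic reference patch.
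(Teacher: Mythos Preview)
Your overall strategy is sound and uses the same ingredients as the paper (the anisotropic trace inequality, the transformation \eqref{norm2}, and an $H^1$-type bound on the reference patch), but the \emph{order of operations} differs from the paper's proof, and this creates a genuine gap in your localization step.

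The paper first performs the decomposition \emph{on the edge}: since only the two basis functions attached to the endpoints ${\sf v}_1,{\sf v}_2$ of $E$ are nonzero there, one has
\[
 (u-\mathfrak I_C u)|_E = (u-r_1(u))|_E - [r_2(u)-r_1(u)]({\sf v}_2)\,\varphi_2|_E,
\]
and only afterwards applies the trace inequality~\eqref{eq:AnisotropicTraceInequality} to $u-r_1(u)$ and $r_2(u)-r_1(u)$ separately. In this way only the two patches $\omega_1,\omega_2$ enter, and their union is precisely $\omega_E$. By contrast, you apply the trace inequality to $u-\mathfrak I_C u$ first, producing the bulk terms $\|u-\mathfrak I_C u\|_{L^2(K)}$ and $\|A_K^{-T}\nabla(u-\mathfrak I_C u)\|_{L^2(K)}$. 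Both of these are element quantities that unavoidably involve \emph{all} $n_K$ vertices of $K$ (as \eqref{proj:L2-estimate} and your own decomposition on $\widehat K$ make explicit), so your argument delivers $\omega_K$, not $\omega_E$. Your final paragraph correctly identifies the edge-support property of the $\varphi_i$, but that observation cannot be used \emph{after} you have already passed to bulk $K$-norms; to exploit it you must decompose on $E$ first, which is exactly the paper's route.

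A second, smaller point: you invoke Lemma~\ref{Lemma:inverse} to control $|\widehat\varphi_i|_{H^1(\widehat K)}$, but that lemma is stated for $v\in V_h(K)$, and the pullback $\widehat\varphi_i=\varphi_i\circ F_K^{-1}$ is \emph{not} harmonic on $\widehat K$ (the map $F_K$ is not a similarity), so $\widehat\varphi_i\notin V_h(\widehat K)$. The correct tool is the norm equivalence on the finite-dimensional space $\widehat\Theta(\widehat\omega)$, which the paper uses throughout and which gives $|\widehat\varphi_i|_{H^1(\widehat K)}\lesssim 1$ directly. With this fix your weighted-gradient bound on the reference patch is valid and, as you anticipated, avoids the $\lambda_{K,1}/\lambda_{K,2}$ penalty of Theorem~\ref{Thm:H1}; the paper achieves the same thing by bounding $\|\widehat\nabla(\widehat u-\widehat r_i(\widehat u))\|_{L^2(\widehat K)}$ via the argument of Lemma~\ref{lemma2} (carried out directly in the reference configuration) rather than by decomposing $\widehat{\mathfrak I_C u}$.
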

\begin{proof}
{\color{black} Let $E\subset \partial K$ be an edge of $K \in \mathcal{K}_h$ with endpoints $\sf{v}_1$ and  $\sf{v}_2$.  Observing that $r_1(u)$ is a virtual element function, we have }
\begin{eqnarray}
\| u -  \mathfrak I_C u\|_{L^2(E)}& = &
\| u - \sum_{i=1}^2 [r_i(u)]({\sf v}_i) \varphi_i\|_{L^2(E)}
\nonumber\\
&=& \| u - r_1(u) -[ r_2(u) - r_1(u)]({\sf v}_2) \varphi_2\|_{L^2(E)}\nonumber\\
&\leq & \| u - r_1(u)\|_{L^2(E)} + \| r_2(u) - r_1(u)\|_{L^\infty(E)}\|\varphi_2\|_{L^2(E)}.\nonumber
\end{eqnarray}
Noting that $\| \varphi_2\|_{L^2(E)}\leq \vert E\vert^{1/2}$ and observing that it holds
$\| r_2(u) - r_1(u)\|_{L^\infty(E)}= \| \widehat{r}_2(\widehat{u}) - \widehat{r}_1(\widehat{u})\|_{L^\infty(\widehat{E})}\lesssim 
\| \widehat{r}_2(\widehat{u}) - \widehat{r}_1(\widehat{u})\|_{L^2(\widehat{E})}
 \lesssim \vert E\vert ^{-1/2} \| {r}_2({u}) - {r}_1({u})\|_{L^2({E})} $ we have 
\begin{eqnarray}
\| u -  \mathfrak I_C u\|_{L^2(E)}& \lesssim &
 \| u - {r}_1({u})\|_{L^2({E})} +
\| {r}_2({u}) - {r}_1({u})\|_{L^2({E})} \nonumber\\
&\lesssim& 
\left(\frac{\vert E\vert}{\vert K\vert}\right)^{1/2}
\left( \| u -r_1(u)\|_{L^2(K)}  + \| r_2(u)-r_1(u)\|_{L^2(K)}
\right. \nonumber\\
&& \left. +\| A_K^{-T} \nabla(u-r_1(u))\|_{L^2(K)}  + \| A_K^{-T} \nabla(r_2(u)-r_1(u)) \|_{L^2(K)} \right),\nonumber
\end {eqnarray} 
 where we employed trace inequality \eqref{eq:AnisotropicTraceInequality}. Now, using 
  \eqref{eq1:lemma1} and
 \eqref{norm2}  we obtain
 \begin{eqnarray}
 \| u -  \mathfrak I_C u\|_{L^2(E)}&\lesssim&
 \left(\frac{\vert E\vert}{\vert K\vert}\right)^{1/2}
 \left( 
 \left(\sum_{\widetilde{K}\subset \omega_1\cup \omega_2} \frac{\vert K\vert}{\vert \widetilde{K}\vert} \|A_{\widetilde{K}}^{-T}\nabla u\|^2_{L^2(\widetilde{K})}\right)^{1/2}\right.\nonumber\\
 &&\left.  + \vert K\vert^{1/2} (\|\widehat{\nabla}(\widehat{u}-\widehat{r}_1(\widehat{u}))\|_{L^2(\widehat{K})}+ 
 \|\widehat{\nabla}(\widehat{u}-\widehat{r}_2(\widehat{u}))\|_{L^2(\widehat{K})})
 \right).\nonumber
 \end{eqnarray}
 Proceeding as in the proof of Lemma \ref{lemma2} and employing \eqref{norm2}
 we get for $i=1,2$
 $$  \|\widehat{\nabla}(\widehat{u}-\widehat{r}_i(\widehat{u}))\|_{L^2(\widehat{K})}\lesssim \vert \widehat{u}\vert_{H^1(\widehat{\omega})} \lesssim
\left( \sum_{\widetilde{K}\subset \omega_i} \vert \widetilde{K}\vert^{-1}
 \|A_{\widetilde{K}}^{-T} \nabla u\|^2_{L^2(\widetilde{K})}\right)^{1/2}.$$
Combining the above estimates and employing the anisotropic mesh assumptions guaranteeing $\vert K\vert \simeq \vert \widetilde{K}\vert$ for $K,\widetilde{K}$ belonging to the same patch, we obtain the thesis.
 \end{proof}

%%%%%%%%%%%%%%%%%%%%%%%%%%%%%%
%%%%%%%%%%%%%%%%%%%%%%%%%%%%%%%
In the following, we rewrite \eqref{proj:L2-estimate} and \eqref{proj:L^2-estimate-edge} in an equivalent way, suitable for future use when deriving in the next section polygonal anisotropic error estimates. 
More precisely, let~$\vec r_{K,i}$ be the normalized eigenvectors to the eigenvalues~$\lambda_{K,i}$ for $i=1,2$, which have been used already several times in the matrix~$U_K$. Namely, it is $U_K = (\vec r_{K,1},\vec r_{K,2})$. Thus, we observe
\[
 \Lambda_K^{1/2}U_K^\top\nabla v
 = \begin{pmatrix}
    \lambda_{K,1}^{1/2}\,\vec r_{K,1}\cdot\nabla v\\
    \lambda_{K,2}^{1/2}\,\vec r_{K,2}\cdot\nabla v
   \end{pmatrix},
\]
and consequently
\[
 \|\alpha_K^{-1}\Lambda_K^{1/2}U_K^\top\nabla v\|_{L_2(\omega_K)}^2
 = \alpha_K^{-2}\left(
    \lambda_{K,1}\,\|\vec r_{K,1}\cdot\nabla v\|_{L_2(\omega_K)}^2+
    \lambda_{K,2}\,\|\vec r_{K,2}\cdot\nabla v\|_{L_2(\omega_K)}^2
   \right).
\]
Furthermore, since $\vec r_{K,i}\cdot\nabla v\in\mathbb R$, we get
\begin{eqnarray*}
  \|\vec r_{K,i}\cdot\nabla v\|_{L_2(\omega_K)}^2
  & = & \sum_{K^\prime\subset\omega_K}\int_{K^\prime} (\vec r_{K,i}^\top\nabla v)^2\,d\xv\\
  & = & \sum_{K^\prime\subset\omega_K}\int_{K^\prime} \vec r_{K,i}^\top\nabla v(\nabla v)^\top\vec r_{K,i}\,d\xv\\
  & = & \vec r_{K,i}^\top\, G_K(v)\, \vec r_{K,i},
\end{eqnarray*}
with
\[
  G_K(v)
  = \sum_{K^\prime\subset\omega_K}
    \begin{pmatrix}
      \|v_{x_1}\|_{L_2(K^\prime)}^2 & \displaystyle\int_{K^\prime} v_{x_1}v_{x_2}\,d\xv \\
      \displaystyle\int_{K^\prime} v_{x_1}v_{x_2}\,d\xv & \|v_{x_2}\|_{L_2(K^\prime)}^2 
    \end{pmatrix}.
\]
Thus~\eqref{proj:L2-estimate} and \eqref{proj:L^2-estimate-edge} can be rewritten as 
\begin{equation}\label{C:1}
  \|v-\mathfrak I_Cv\|_{L_2(K)}
   \leq c\, \alpha_K^{-1}\left(
      \lambda_{K,1}\,\vec r_{K,1}^\top\, G_K(v)\, \vec r_{K,1}+
      \lambda_{K,2}\,\vec r_{K,2}^\top\, G_K(v)\, \vec r_{K,2}
     \right)^{1/2}
\end{equation}
and
\begin{equation}\label{C:2}
  \|v-\mathfrak I_Cv\|_{L_2(E)}
   \leq c\, \alpha_K^{-1}\,\,\frac{|E|^{1/2}}{|K|^{1/2}}\left(
      \lambda_{K,1}\,\vec r_{K,1}^\top\, G_K(v)\, \vec r_{K,1}+
      \lambda_{K,2}\,\vec r_{K,2}^\top\, G_K(v)\, \vec r_{K,2}
     \right)^{1/2},
\end{equation}
respectively (cf.~\cite[(2.12) and (2.15)]{FormaggiaPerotto2003}). 

{\color{black} We now introduce a variant of $\mathfrak I_C$ preserving the homogeneous boundary conditions. To this aim we number the $N$ vertices of the partition so that the first $N^\partial$ vertices are the boundary ones, while the remaining ones (i.e. from $N^\partial+1$ to $N$) are the internal vertices. 
The quasi-interpolant $\mathfrak I_{C,0}: H^1_0(\Omega) \to V_{h,0}$ is thus defined as
\begin{equation}
\mathfrak I_{C,0} u = \sum_{i=N^\partial+1}^N [r_i(u)]({\sf v}_i)\;\varphi_i(x).
\end{equation}
The results contained in Theorem \ref{Thm:L2}, \ref{Thm:H1} and \ref{Thm:trace} are still true, and the analogous estimates to \eqref{C:1} and \eqref{C:2} hold as well. For instance, in order to extend Theorem \ref{Thm:L2} it is sufficient to observe that for $u\in H^1_0(\Omega)$ the following holds true 
\begin{equation}
\|u-\mathfrak I_{C,0} u \|_{L^2(K)}\leq \|u-\mathfrak I_{C} u \|_{L^2(K)} +  \sum_{i=1}^{N^\partial} \vert [r_i(u)]({\sf v}_i)\vert \;\|\varphi_i(x)\|_{L^2(K)}.
\end{equation}
Finally, denoting by $E\subset \partial \Omega$ one of the two  boundary edges containing the vertex ${\sf v}_i$ and employing the norm equivalence on finite dimensional spaces together with $u\vert_{\partial \Omega}=0$ we have 
\begin{eqnarray}
\vert [r_i(u)]({\sf v}_i)\vert&\leq& \| r_i(u)\|_{L^\infty(E)}
=  \| \widehat{r}_i(\widehat{u})\|_{L^\infty(\widehat{E})}\lesssim 
\| \widehat{r}_i(\widehat{u})\|_{L^2(\widehat{E})} \nonumber\\
&=& \| \widehat{r}_i(\widehat{u}) - \widehat{u}\|_{L^2(\widehat{E})}
\leq  \| \widehat{r}_i(\widehat{u}) - \widehat{u}\|_{L^2(\widehat{K})} + \vert \widehat{r}_i(\widehat{u}) - \widehat{u}\vert_{H^1(\widehat{K})},\nonumber
\end{eqnarray}
where in the last step we used standard trace inequality. Finally, recalling that it holds 
$$ 
 \| \widehat{r}_i(\widehat{u}) - \widehat{u}\|_{L^2(\widehat{K})} + \vert \widehat{r}_i(\widehat{u}) - \widehat{u}\vert_{H^1(\widehat{K})} \lesssim 
 \vert \widehat{u} \vert_{H^1(\widehat{K})},
$$
the analogous to Theorem \ref{Thm:L2} follows after employing 
\eqref{norm2} and summing over the boundary vertices.
}

\section{Anisotropic a posteriori error estimate}\label{S:apos}
In this section we derive an anisotropic polygonal a posteriori error
estimate for the virtual element approximation of \eqref{pb}.
{We preliminary observe that in view of Lemma
  \ref{lem:MapH1norm} the stabilization form satisfies
  \begin{equation}\label{S1}
    \sqrt{\frac{\lambda_{K,2}}{\lambda_{K,1}}}\;  S^K(w_h,w_h)  \lesssim |w_h|_{H^1(K)}^2
    \lesssim \sqrt{\frac{\lambda_{K,1}}{\lambda_{K,2}}}\;S^K(w_h,w_h),
  \end{equation}
  for $w_h=(I-\Pi_1^{0,K})v_h$, $v_h\in V_h(K)$.  Indeed, it is
  sufficient to employ \eqref{norm3} in combination with the following
$$ 
S^K(w_h,w_h) \simeq \|w_h\|^2_{L^\infty(K)}=
\|\widehat{w}_h\|^2_{L^\infty(\widehat{K})}\simeq
\|\widehat{w}_h\|^2_{H^1(\widehat{K})},
$$
where we used the definition of $S^K$, the mesh assumption (in
particular the uniform boundedness of the number $n_K$ of element
vertices) and the fact that on finite dimensional spaces all norms are
equivalent.

\begin{remark}\label{remark:S}
  Let us discuss the sharpness of the bounds in \eqref{S1}. On the rectangle $K^\ast = (0,a)\times(0,b)$, with $a>b$, it can be
  easily seen that the virtual element basis functions in $V_h$ are
  \begin{align*}
    \varphi_0(x,y)  &= \frac{x\,y}{a\,b}-\frac{y}{b}-\frac{x}{a}+1 \,,
    &
      \varphi_1(x,y) &= \frac{x}{a}-\frac{x\,y}{a\,b} \,,
    \\
    \varphi_2(x,y) &= \frac{x\,y}{a\,b} \,,
    &
      \varphi_3(x,y) &= \frac{y}{b}-\frac{x\,y}{a\,b} \,,
  \end{align*}
  and, for $w_h=(I-\Pi_1^{0,K})   \varphi_i$, $i=0,\ldots,3$, the following holds:
  \begin{gather*}
    \lambda_{1,E} = \frac{a^2}{12}\,, \quad\lambda_{2,E} =
    \frac{b^2}{12} \,,
    \\ \frac{|w_h|_{H^1(K^\ast)}^2}{ S^{K^\ast}(w_h,w_h)}= \frac 1 3
    \left(\sqrt{\frac{\lambda_{K,1}}{\lambda_{K,2}}}+
    \sqrt{\frac{\lambda_{K,2}}{\lambda_{K,1}}}\right)\,,
      \end{gather*}
      and hence
    \begin{gather*}
    \frac23\sqrt{\frac{\lambda_{K,2}}{\lambda_{K,1}}}\;
    S^{K^\ast}(w_h,w_h) \leq |w_h|_{H^1(K^\ast)}^2 \leq
    \frac23\sqrt{\frac{\lambda_{K,1}}{\lambda_{K,2}}}\;S^{K^\ast}(w_h,w_h)
    \,.
  \end{gather*}
\end{remark}
}

We now state the main result of the paper.
\begin{proposition}\label{main:th}
  Let $u_h\in V_{h,0}$ be the VEM approximation to the solution $u$ of
  \eqref{pb}. {\color{black} Under Assumptions \ref{assumption} and
    \ref{assumption:2}}, for $e=u-u_h$ it holds
  \begin{eqnarray}
    \| \nabla e \|^2_{L^2(\Omega)} &\lesssim& 
                                              \sum_{K\in\mathcal{T}_h}
                                              \| R_K\|_{L^2(K)} \alpha_K^{-1}\left(
                                              \lambda_{K,1}\,\vec r_{K,1}^\top\, G_K(e)\, \vec r_{K,1}+
                                              \lambda_{K,2}\,\vec r_{K,2}^\top\, G_K(e)\, \vec r_{K,2}
                                              \right)^{1/2}\nonumber\\
                                   &+&\sum_{E\in\mathcal{S}_h} \| J_E\|_{L^2(E)}
                                       \alpha_K^{-1}\left( \frac{\vert E \vert}{\vert K \vert}\right)^{1/2}\left(
                                       \lambda_{K,1}\,\vec r_{K,1}^\top\, G_K(e)\, \vec r_{K,1}+
                                       \lambda_{K,2}\,\vec r_{K,2}^\top\, G_K(e)\, \vec r_{K,2}
                                       \right)^{1/2}\nonumber\\
                                   &+& \sum_{K\in\mathcal{T}_h} 
                                       M_K^2
                                       S^K((I-\Pi_1^{0,K})u_h, (I-\Pi_1^{0,K})u_h) \nonumber\\
                                   &+& \sum_{K\in\mathcal{T}_h} \|f-f_h\|_{L^2(K)} 
                                       \alpha_K^{-1}\left(
                                       \lambda_{K,1}\,\vec r_{K,1}^\top\, G_K(e)\, \vec r_{K,1}+
                                       \lambda_{K,2}\,\vec r_{K,2}^\top\, G_K(e)\, \vec r_{K,2}
                                       \right)^{1/2},
                                       \nonumber
  \end{eqnarray}
  where
  \begin{eqnarray}
    R_K&=&(f_h + \nabla\cdot\Pi_0^{0,K} \nabla u_h)_K,\nonumber\\
    J_E&=&\vert\![ \Pi_0^{0,E} \nabla u_h ]\!{\vert_E},\nonumber\\
    M_K&=&\left(\frac{\lambda_{K,1}}{\lambda_{K,2}}\right)^{{\frac 5 4}}.\nonumber
  \end{eqnarray}
\end{proposition}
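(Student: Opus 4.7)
The plan is to start from the energy identity $\|\nabla e\|_{L^2(\Omega)}^2 = a(e,e)$ and to insert the anisotropic quasi-interpolant $\mathfrak I_{C,0}\, e \in V_{h,0}$ built in Section~\ref{S:quasi-ineterp}. Setting $\eta := e - \mathfrak I_{C,0}\, e$ and exploiting both the continuous equation $a(u,v)=(f,v)$ and the discrete equation $a_h(u_h,v_h)=(f_h,v_h)$ on $V_{h,0}$, the error is decomposed as
\[
 a(e,e) = \bigl[(f,\eta) - a(u_h,\eta)\bigr] + \bigl[(f-f_h,\mathfrak I_{C,0}\, e) + \bigl(a_h(u_h,\mathfrak I_{C,0}\, e) - a(u_h,\mathfrak I_{C,0}\, e)\bigr)\bigr].
\]
The first bracket is a standard residual/jump expression against the "smooth" test function $\eta$, while the second bracket isolates data oscillation and the VEM consistency error.

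For the first bracket I would integrate by parts elementwise after splitting $\nabla u_h = \Pi_0^{0,K}\nabla u_h + (I-\Pi_0^{0,K})\nabla u_h$. Since the projected gradient is piecewise constant, the bulk contribution produces the volume residual $R_K = f_h + \nabla\cdot\Pi_0^{0,K}\nabla u_h$ (written in general form for later use), the boundary contributions reassemble across interelement edges into the jumps $J_E$, and there appears a data oscillation term $(f-f_h,\eta)$ together with a leftover contribution of the form $((I-\Pi_0^{0,K})\nabla u_h,\nabla\eta)_K$. Cauchy--Schwarz on the $L^2$-pairings with $\eta$ over $K$ and $E$ combined with the anisotropic interpolation bounds \eqref{C:1} and \eqref{C:2} produces exactly the factor
\[
 \alpha_K^{-1}\bigl(\lambda_{K,1}\,\vec r_{K,1}^\top G_K(e)\vec r_{K,1}+\lambda_{K,2}\,\vec r_{K,2}^\top G_K(e)\vec r_{K,2}\bigr)^{1/2}
\]
multiplying $\|R_K\|_{L^2(K)}$, $\|J_E\|_{L^2(E)}$ and, after subtracting an element constant and invoking Lemma~\ref{lem:BestApproxByConst}, $\|f-f_h\|_{L^2(K)}$.

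For the second bracket, the oscillation piece $(f-f_h,\mathfrak I_{C,0}\, e)$ is treated analogously, using that $f_h$ is the piecewise-constant projection of $f$. The VEM consistency error $(a_h^K-a^K)(u_h,\mathfrak I_{C,0}\, e)$ is handled elementwise by exploiting the exactness of both bilinear forms on affine polynomials, which reduces it to a stabilization pairing $S^K((I-\Pi_1^{0,K})u_h,(I-\Pi_1^{0,K})\mathfrak I_{C,0}\, e)$ plus an $(I-\Pi_0^{0,K})\nabla u_h$ contribution (the latter naturally combines with the leftover term from the first bracket). Both are controlled by the anisotropic stabilization equivalence \eqref{S1}, which contributes an anisotropic factor $(\lambda_{K,1}/\lambda_{K,2})^{1/4}$ and leaves $|\mathfrak I_{C,0}\, e|_{H^1(K)}$. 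Theorem~\ref{Thm:H1}, applied via the triangle inequality, then costs an additional factor $\lambda_{K,1}/\lambda_{K,2}$ against $|e|_{H^1(\omega_K)}$. A Cauchy--Schwarz summation over elements and a Young inequality, which absorbs the resulting $\|\nabla e\|_{L^2(\Omega)}$ factor into the left-hand side, produces exactly $M_K^2=(\lambda_{K,1}/\lambda_{K,2})^{5/2}$ in front of $S^K((I-\Pi_1^{0,K})u_h,(I-\Pi_1^{0,K})u_h)$.

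The main obstacle is tracking these anisotropic constants sharply: the exponent $5/4$ in $M_K$ arises as the sum of $1/4$ from \eqref{S1} and $1$ from the anisotropic $H^1$-stability of $\mathfrak I_{C,0}$ in Theorem~\ref{Thm:H1}, with no slack available in either step. The remainder of the argument is routine bookkeeping, completed by Cauchy--Schwarz on the sums over elements and edges together with the bounded overlap of the vertex patches $\omega_K$ guaranteed by Proposition~\ref{prop:BoundedNumOfNodesVSElems}.
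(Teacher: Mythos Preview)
Your proposal is correct and follows essentially the same route as the paper's proof. You use the same error decomposition (the paper writes it with a generic $\chi\in V_{h,0}$ and then specialises to $\chi=e_I=\mathfrak I_{C,0}e$, arriving at the six terms {\tt I}--{\tt VI} that coincide with your pieces), the same elementwise integration-by-parts identity producing $R_K$, $J_E$ and the $(I-\Pi_0^{0,K})\nabla u_h$ leftover, the same use of \eqref{C:1}--\eqref{C:2} for the residual/jump terms, the same combination of the two oscillation pieces into $(f-f_h,e)_K=(f-f_h,e-\Pi_0^{0,K}e)_K$ treated via Lemma~\ref{lem:BestApproxByConst}, and the same mechanism \eqref{S1}$+$Theorem~\ref{Thm:H1} yielding the factor $(\lambda_{K,1}/\lambda_{K,2})^{1/4+1}=M_K$ in the stabilization and consistency terms. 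The only cosmetic difference is that the paper bounds the leftover $((I-\Pi_0^{0,K})\nabla u_h,\nabla\eta)_K$ (its term {\tt IV}) and the consistency piece $((I-\Pi_0^{0,K})\nabla u_h,\nabla e_I)_K$ (its ${\tt V}_1$) separately rather than combining them; either way the dominant contribution still comes from ${\tt V}_2$ and yields the same $M_K$.
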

\begin{proof}
  The proof closely follows \cite{Cangiani_et_al-apost:2017}.  Let us
  set $e=u-u_h\in H^1_0(\Omega)$ and we preliminary observe that for
  every $v\in H^1_0(\Omega)$ the following holds
  \begin{eqnarray}
    a(e,v)&=& (f,v)-a(u_h,\chi)-a(u_h,v-\chi)\nonumber\\
          &=&(f-f_h,\chi) + (f,v-\chi) + a_h(u_h,\chi)-a(u_h,\chi)-a(u_h,v-\chi)\label{aux:1}
  \end{eqnarray} 
  for all $\chi\in V_h$. Moreover, we observe that integration by
  parts yields
  \begin{equation}\label{aux:2}
    a(u_h,w)=-(\nabla\cdot \Pi_0^{0,h}\nabla u_h,w) + \sum_{E\in{S}_h} \int_E \vert\![ \Pi_0^{0,E} \nabla u_h ]\!\vert w ds +((I -  \Pi_0^{0,h}) \nabla u_h,\nabla w)
  \end{equation}
  for all $w\in H^1_0(\Omega)$, where
  $\Pi_0^{0,h}(\cdot)\vert_K=\Pi_0^{0,K}(\cdot)$ for every
  $K\in \mathcal{K}_h$.  Employing \eqref{aux:1}-\eqref{aux:2} we get
  \begin{eqnarray}
    a(e,v)&=& \sum_{K\in \mathcal{T}_h} \left( (R_k,v-\chi)_K + (\theta_K,v-\chi)_K + B_K(u_h,v-\chi)\right) - \sum_{E\in S_h} (J_E,v-\chi)_E \nonumber\\
          && + (f-f_h,\chi) + a_h(u_h,\chi)-a(u_h,\chi)
  \end{eqnarray}
  where
  \begin{eqnarray}
    R_K&=&(f_h + \nabla\cdot\Pi_0^{L^2} \nabla u_h)_K,\nonumber\\
    \theta_K&=&(f-f_h)_K,\nonumber\\
    B_K(w_h,v)&=&((I-\Pi_0^{0,K}) \nabla w_h, \nabla v)_K,\nonumber\\
    J_E&=&\vert\![ \Pi_0^{0,E} \nabla u_h ]\!{\vert_E}\nonumber.
  \end{eqnarray}

  {\color{black}Let $e_I= \mathfrak I_{C,0} e \in V_{h,0} $ be the
    quasi-interpolant of $e$ satisfying the analogous version to the
    estimates \eqref{C:1}-\eqref{C:2}}. Then we have
  \begin{eqnarray}
    \| \nabla e\|_{L^2(\Omega)}^2 &=& \sum_{K\in \mathcal{T}_h}\left\{
                                      (R_K,e-e_I)_{L^2(K)}+
                                      (\theta_K,e-e_I)_{L^2(K)}+
                                      (f-f_h,e_I)_{L^2(K)}\right. \nonumber\\
                                  && \left. +
                                     B_K(u_h,e-e_I)+
                                     (a_h^K(u_h,e_I)-a^K(u_h,e_I))
                                     \right\}
                                     -\sum_{E\in\mathcal{S}_h} (J_E,e-e_I)_{L^2(E)}\nonumber\\
                                  &=:& \sum_{K\in \mathcal{T}_h} \left( {\tt I+II + III + IV + V} \right)
                                       - \sum_{E\in\mathcal{S}_h} {\tt VI}.
  \end{eqnarray}
  Let us now estimate the above terms. Employing Cauchy-Schwarz
  inequality together with {\color{black} the analogous estimate to
    \eqref{C:1}} we have
  \begin{equation}\label{I} {\tt I}\lesssim \| R_K\|_{L^2(K)}
    \alpha_K^{-1}\left(
      \lambda_{K,1}\,\vec r_{K,1}^\top\, G_K(e)\, \vec r_{K,1}+
      \lambda_{K,2}\,\vec r_{K,2}^\top\, G_K(e)\, \vec r_{K,2}
    \right)^{1/2}.
  \end{equation}
  Similarly, employing {\color{black} the analogous estimate to
    \eqref{C:2}}, we have
  \begin{equation}\label{VI} {\tt VI}\lesssim \| J_E\|_{L^2(E)}
    \alpha_K^{-1}\left(\frac{\vert E\vert }{\vert K\vert }\right)^{1/2}\left(
      \lambda_{K,1}\,\vec r_{K,1}^\top\, G_K(e)\, \vec r_{K,1}+
      \lambda_{K,2}\,\vec r_{K,2}^\top\, G_K(e)\, \vec r_{K,2}
    \right)^{1/2}.
  \end{equation}
  % Similarly, we have
%$${\tt II} \leq \| f-f_h \|_{L^2(K)} \alpha_K^{-1}\left(
%  \lambda_{K,1}\,\vec r_{K,1}^\top\, G_K(e)\, \vec r_{K,1}+
%  \lambda_{K,2}\,\vec r_{K,2}^\top\, G_K(e)\, \vec r_{K,2}
% \right)^{1/2}.$$
  Combining ${\tt II}$ and ${\tt III}$ yields
  \begin{eqnarray*}
    {\tt II} + {\tt III} 
    &  = & (f-f_h,e)_K 
           =   (f-f_h,e-\Pi_0^{0,K}e)_K \\
    &\leq& \|f-f_h\|_{L^2(K)} \|e-\Pi_0^{0,K}e\|_{L^2(K)}\\
    &\lesssim& \|f-f_h\|_{L^2(K)} \|\alpha_K^{-1}\Lambda_K^{1/2}U_K^\top\nabla e\|_{L^2(K)}\\
    &  = & \|f-f_h\|_{L^2(K)} \alpha_K^{-1}\left(
           \lambda_{K,1}\,\vec r_{K,1}^\top\, G_K(e)\, \vec r_{K,1}+
           \lambda_{K,2}\,\vec r_{K,2}^\top\, G_K(e)\, \vec r_{K,2}
           \right)^{1/2}
  \end{eqnarray*}
  according to Lemma~\ref{lem:BestApproxByConst}.  We now focus on the
  term ${\tt IV}$. Noting that it holds
  \begin{equation}\label{aux:3}
    \|(I-\Pi_0^{0,K})\nabla u_h \|_{L^2(K)} = 
    \|(I-\Pi_0^{0,K})\nabla (I - \Pi_1^{0,K})u_h \|_{L^2(K)}
    \leq \| \nabla (I - \Pi_1^{0,K})u_h \|_{L^2(K)}  
  \end{equation}
  and remembering \eqref{S1} we have
  \begin{eqnarray} {\tt IV} &\leq& \| \nabla(I-\Pi_1^{0,K}) u_h
    \|_{L^2(K)}
                                   \| \nabla(e-e_I)\|_{L^2(K)}\nonumber\\
                            &\leq&
                                   \left({\frac{\lambda_{K,1}}{\lambda_{K,2}}}\right)^{\frac
                                   1 4} \left(S^K( (I-\Pi_1^{0,K}) u_h
                                   , (I-\Pi_1^{0,K}) u_h
                                   )\right)^{\frac 1 2}
                                   \| \nabla(e-e_I)\|_{L^2(K)} \nonumber\\
                            &\leq&   \left({\frac{\lambda_{K,1}}{\lambda_{K,2}}}\right)^{\frac 5 4} \left(S^K( (I-\Pi_1^{0,K}) u_h , (I-\Pi_1^{0,K}) u_h )\right)^{\frac 1 2} 
                                   \| \nabla e\|_{L^2(\omega_K)}
  \end{eqnarray}
  where in the last step we {\color{black} employed the analogous
    version to \eqref{interp:H1-estimate} for $\mathfrak{I}_{C,0}$}.
  Finally, we consider the term ${\tt V}$. It is immediate to verify
  that it holds
  \begin{equation}\label{V} {\tt V}=
    \underbrace{((I-\Pi_0^{0,K})\nabla u_h,\nabla e_I )_K}_{\tt{V}_1}   %$$}
    - 
    \underbrace{S^K((I-\Pi_1^{0,K})u_h, (I-\Pi_1^{0,K})e_I}_{\tt{V}_2}.  %% ${\tt V}_2$
  \end{equation} 
  Employing the Cauchy-Schwarz inequality together with \eqref{aux:1}
  and {\color{black} the analogous estimate to
    \eqref{interp:H1-estimate}} we have
  \begin{eqnarray}
    |{\tt V}_1|&\leq& \| (I-\Pi_0^{0,K})\nabla u_h \|_{L^2(K)} \| \nabla e_I\|_{L^2(K)}\nonumber\\
               &=&\| (I-\Pi_0^{0,K})\nabla (I-\Pi_1^{0,K})u_h \|_{L^2(K)} \| \nabla e_I\|_{L^2(K)}
                   \nonumber\\
               &\leq&  \| \nabla (I-\Pi_1^{0,K})u_h \|_{L^2(K)} \| \nabla e_I\|_{L^2(K)}\nonumber\\
               &\leq& 
                      \left({\frac{\lambda_{K,1}}{\lambda_{K,2}}}\right)^{\frac 1 4} \left(S^K( (I-\Pi_1^{0,K}) u_h , (I-\Pi_1^{0,K}) u_h )\right)^{\frac 1 2} \| \nabla e_I\|_{L^2(K)}\nonumber\\
               &\leq& 
                      2\left({\frac{\lambda_{K,1}}{\lambda_{K,2}}}\right)^{\frac 5 4} \left(S^K( (I-\Pi_1^{0,K}) u_h , (I-\Pi_1^{0,K}) u_h )\right)^{\frac 1 2} \| \nabla e\|_{L^2(\omega_K).}\nonumber
  \end{eqnarray}

  Employing \eqref{S1} and \eqref{interp:H1-estimate} we have
  \begin{eqnarray}
    |{\tt V}_2| &\leq& 
                       \left({\frac{\lambda_{K,1}}{\lambda_{K,2}}}\right)^{\frac 1 4}
                       S^K((I-\Pi_1^{0,K})u_h, (I-\Pi_1^{0,K})u_h)^{\frac 1 2 }
                       \|\nabla(I-\Pi_1^{0,K})e_I\|_{L^2(K)} 
                       \nonumber\\
                &\leq& \left({\frac{\lambda_{K,1}}{\lambda_{K,2}}}\right)^{\frac 1 4}
                       S^K((I-\Pi_1^{0,K})u_h, (I-\Pi_1^{0,K})u_h)^{\frac 1 2 } 
                       \| \nabla e_I\|_{L^2(K)}\nonumber\\
                &\leq& 2\left({\frac{\lambda_{K,1}}{\lambda_{K,2}}}\right)^{\frac 5 4}
                       S^K((I-\Pi_1^{0,K})u_h, (I-\Pi_1^{0,K})u_h)^{\frac 1 2 } 
                       \| \nabla e\|_{L^2(\omega_K)}.\nonumber
  \end{eqnarray}
  Hence,
  \begin{eqnarray} {\tt V}&\leq& 4
    \left(\frac{\lambda_{K,1}}{\lambda_{K,2}}\right)^{\frac 5 4}
    S^K((I-\Pi_1^{0,K})u_h, (I-\Pi_1^{0,K})u_h)^{\frac 1 2 } \| \nabla
    e\|_{L^2(\omega_K)}.\nonumber
  \end{eqnarray}
  % \Green{%
  % Maybe we can also combine ${\tt IV}$ and ${\tt V}$. This yields
  % \begin{equation*}
  %   {\tt IV}+{\tt V}=((I-\Pi_0^{L^2})\nabla u_h,\nabla e ) -
  %   S^K((I-\Pi_1^{L^2})u_h, (I-\Pi_1^{L^2})e_I)=:\widetilde{\tt
  %   V}_1+{\tt V}_2,
  % \end{equation*}
  % compare~\eqref{V}. We may proceed as above and employ the
  % stability $\|\nabla e_I\|_{L^2(K)}\lesssim \|\nabla
  % e\|_{L^2(K)}$ (to be proved).  } {\color{black} [M: Yes, good
  % idea. But the final result is still the second term on the right
  % in the bound of Proposition 4. Correct?]}  \Blue{[S: Yes and no! I
  % think the result is the same, however, in this case we don't need
  % the estimate $\| \nabla(e-e_I)\|_{L^2(K)}\leq \| \nabla
  % e\|_{L^2(K)}$ used to bound ${\tt
  % IV}$. Therefore, we only have to prove the first item bolow!?]}
  % {\color{black}[M: right!]}
%
%
  % {\color{black} [M: Summarizing, so far we need to prove:
  % \begin{itemize}
  % \item $\|\nabla e_I\|_{L^2(K)}\lesssim \|\nabla e\|_{L^2(K)}$
  % \item $\| \nabla(e-e_I)\|_{L^2(K)}\leq \| \nabla
  %   e\|_{L^2(K)}$. \Blue{\quad\mbox{[S: Maybe not needed, see
  %   above.]}} {\color{black}[M: right!]}
  % \end{itemize}
  % ] }of}
  Using that the cardinality of
  $\omega_K$ is uniformly bounded (i.e. the number of edges of each
  polygon is uniformly bounded) yield the thesis.
\end{proof}
\begin{remark}\label{rem:exponent}
A close inspection of the proof of Proposition \ref{main:th}  reveals that the presence of the factor $5/4$ in  the term $M_K$ is related to the combined use of \eqref{S1} and \eqref{interp:H1-estimate}. While the bounds in \eqref{S1} are optimal (see Remark \ref{remark:S}), we conjecture that the estimate  \eqref{interp:H1-estimate} is suboptimal in view of the presence of the factor $\lambda_{K,1}/\lambda_{K,2}$. The presence of the factor $5/4$ has a relevant effect on the performance of the adaptive refinement procedure (see Section \ref{sec:numres} for further comments).
\end{remark}
In the following, we focus on the computation of
$G_K(e)$. In order to deal with this term we employ Zienkiewicz-Zhu
(ZZ) error estimator which yields
\begin{equation}\label{G_K:approx}
  G_K(e) \simeq \sum_{K^\prime\subset\omega_K}
  \begin{pmatrix}
    \displaystyle\int_{K^\prime} (\eta_1^{ZZ}(u_h))^2  \,d\xv  & \displaystyle\int_{K^\prime}  \eta_1^{ZZ}(u_h) \eta_2^{ZZ}(u_h) \,d\xv \\
    \displaystyle\int_{K^\prime} \eta_1^{ZZ}(u_h) \eta_2^{ZZ}(u_h)
    \,d\xv & \displaystyle\int_{K^\prime} (\eta_2^{ZZ}(u_h))^2 \,d\xv
  \end{pmatrix},
\end{equation}
with
$\eta_i^{ZZ}(u_h)$ to be properly defined.  Here, we
take $$\eta_i^{ZZ}(u_h)=(I-\Pi_h^{ZZ}) (\partial_{x_i} (\Pi_1^{0,h}
u_h)),$$ $i=1,2$ where for every vertex ${\sf v}$ of $K^\prime$ we set
$$ 
\Pi_h^{ZZ} (\partial_{x_i} (\Pi_1^{0,h} u_h))({\sf v})=
\frac{1}{\sum_{K^{''}: {\sf v}\in K^{''}} \vert K^{''} \vert }
\sum_{K^{''}: {\sf v}\in K^{''}} \vert K^{''} \vert \partial_{x_i}
(\Pi_1^{0,K^{''}} u_h)_{\vert K^{''}}.
$$
We employ the above vertex values $\Pi_h^{ZZ} (\partial_{x_i}
(\Pi_1^{0,h} u_h))({\sf
  v})$ to construct, via e.g. least square fitting, a linear
polynomial on $K^\prime$ that we denote by $\Pi_h^{ZZ} (\partial_{x_i}
(\Pi_1^{0,h} u_h))_{\vert
  K^\prime}$. This latter enters in the construction of
$\eta_i^{ZZ}(u_h)$ and thus it is employed to approximate $G_K(e)$.

% mainfile: paper_30102019
\newcommand{\refmaxmom}{RefMaxMomentum}
\newcommand{\refanis}{AnisotropicRef}

\section{Numerical Results}
\label{sec:numres}
\newcommand{\stt}{\eta_h}
\newcommand{\sth}{\eta_h^{\mathrm{heur}}}
In this section we assess the behavior of the error estimate on three
test cases.  We recall that, according to the previous section (cf.\ Proposition \ref{main:th}), the estimator is
defined as:
\begin{equation}\label{eq:def_est_theory}
  \stt = 
  \left(
    \sum_{K\in\mathcal{T}_h} \eta_{K}^2 + \sum_{E\in\mathcal{S}_h}
    \xi_E^2 + \sum_{K\in\mathcal{T}_h} \sigma_{K}^2
  \right)^{1/2} \,,
\end{equation}
where

\begin{eqnarray*}
  \eta_K^2 &=& \| R_K\|_{L^2(K)} \alpha_K^{-1}\left(
             \lambda_{K,1}\,\vec r_{K,1}^\top\, G_K(e)\, \vec r_{K,1}+
             \lambda_{K,2}\,\vec r_{K,2}^\top\, G_K(e)\, \vec r_{K,2}
             \right)^{1/2} ,
  \\
  \xi_E^2 &=& \| J_E\|_{L^2(E)} \max_{\substack{K\colon E\subset \partial K}} 
  \alpha_K^{-1}\left( \frac{\vert E \vert}{\vert K
  \vert}\right)^{1/2}\left( \lambda_{K,1}\,\vec r_{K,1}^\top\,
  G_K(e)\, \vec r_{K,1}+ \lambda_{K,2}\,\vec r_{K,2}^\top\,
  G_K(e)\, \vec r_{K,2} \right)^{1/2} ,
\end{eqnarray*}
and $\sigma_K$ is given by
\begin{eqnarray}
  \tilde{\sigma}_K^2 &=& S^K((I-\Pi_1^{0,K})u_h, (I-\Pi_1^{0,K})u_h) \,,
                         \label{eq:def_est_theory_sigmatilde}
  \\
  \sigma_K^2 &=& M_K^2
%                 \left(\frac{\lambda_{K,1}}{\lambda_{K,2}}\right)^{\frac52}
                 \tilde{\sigma}_K^2 \,.
                 \label{eq:def_est_theory_sigma}
\end{eqnarray}

To highlight the advantage of using an anisotropic adaptive process,
we compare our results with the ones obtained by refining the mesh
with the following isotropic error estimate:
\begin{equation}\label{eq:def_est_iso}
  \eta^{\mathrm{iso}}_h = \left( \sum_{K\in\mathcal{T}_h}
    h^2_K \| R_K\|_{L^2(K)}^2 + \sum_{E\in\mathcal{S}_h}
    h_E \| J_E\|_{L^2(E)}^2
    + \sum_{K \in \mathcal{T}_h} \tilde{\sigma}_K^2
  \right)^{\frac12},
\end{equation}
see \cite{Berrone-Borio:2017,Cangiani_et_al-apost:2017}.\\
We also introduce the following \emph{heuristically scaled estimator}
\begin{equation}\label{eq:def_est_heur}
  \sth = \left(\sum_{K\in\mathcal{T}_h} \eta_{K}^2 + \sum_{E\in\mathcal{S}_h}
    \xi_E^2 + \sum_{K\in\mathcal{T}_h} \tilde{\sigma}_{K}^2\right)^{\frac12} \,,
\end{equation}
that differs from $\stt$ for the presence of the unscaled stabilization terms $\tilde{\sigma}_K^2$ (cf. Remark \ref{rem:exponent}).

In all the test cases, we consider \eqref{pb} with $\Omega =
(0,1)\times(0,1)$. All the three proposed tests have a boundary layer
and are solved using VEM of order $1$ and $2$. { We remark that the extension of the anisotropic a posteriori framework developed in the previous section to the case of VEM of order $2$ (see \cite{volley} for details on the definition of the approximation spaces) is straightforward.} 
In the first two test cases the solution is purely anisotropic while in the last test there
is both an isotropic structure and an anisotropic layer. Before presenting the results of the computations, we describe in detail the cell refinement strategy. {To simplify the implementation of the anisotropic mesh refinement process, that in presence of very general elements may become computationally demanding, we restrict ourselves to convex elements. The efficient implementation of the mesh refinement process in the general case is under investigation.}

%In the first test the boundary layer is in the upper right corner with a strong gradient in
%both directions. In the second test, instead, the boundary layer is on
%the right side of the domain, so the gradient has only a large
%component along the $x$ direction. The last one is the union of a
%boundary layer on the right side of the domain and a bubble function
%in the center to mix both an anisotropic and an isotropic solution.

\subsection{Cell refinement strategy}
{
The anisotropic adaptive VEM hinges upon the classical paradigm
$$
{\ldots} \to {\tt SOLVE} \to {\tt ESTIMATE} \to {\tt MARK} \to {\tt REFINE} \to
{\ldots}$$  
The module {\tt MARK} is based on 
the D\"orfler strategy, see, e.g., \cite{Nochetto-Veeser:2012} for more details. All numerical tests have been run with marking parameter equal to $1/2$.

In the sequel we focus on the description of the module {\tt REFINE}, see Algorithm \ref{alg:Refinement} below. We aim at designing a refinement strategy that reduces the size of the element along the direction of the
gradient of the error (thus cutting in the orthogonal direction), while preventing an unnecessary increase of the aspect ratio of the polygon. The approach here applied extends the strategy presented in \cite{Berrone-Borio-DAuria:2019} {(cf., e.g.,  \cite{Anisotropic_refinement} and \cite{Houston:anisDG:2007b} for refinement strategies in the case of triangular and quadrilateral elements, respectively).}

More precisely, for each marked polygon $K$,  we compute: (a) the eigenvalues $\lambda_{K,1}$ and
$\lambda_{K,2}$
($\lambda_{K,1}\ge\lambda_{K,2}$) of the covariance matrix
$M_{\rm Cov}=M_{\rm Cov}(K)$ together with the corresponding
eigenvectors $\vec{r}_{K,i}$, $i=1,2$;  (b) the eigenvalues  $\lambda_{G,1}$ and $\lambda_{G,2}$ ($\lambda_{G,1} \ge \lambda_{G,2}$) of the matrix $G_K=G_K(e)$ together with the corresponding eigenvectors
$\vec{r}_{G,i}$, $i=1,2$. The matrix $G_K$ is computed by resorting to the ZZ approximation, cf. \eqref{G_K:approx} {for VEM of order $1$ (the case of VEM of order $2$ simply requires the use of a quadratic least square fitting)}.
We notice that  large values of $\lambda_{G,1}/\lambda_{G,2}$ indicate a local anisotropic behaviour of the gradient of the error, whereas large values of $\lambda_{K,1}/\lambda_{K,2}$ are associated to anisotropic elements.  

If $\left(\lambda_{G,1}/\lambda_{G,2}\right) \ge
\left(\lambda_{K,1}/\lambda_{K,2}\right)$, then the refinement strategy cuts the polygon $K$ along $\vec{r}_{G,2}$, otherwise it cuts along $\vec{r}_{K,2}$.

Whenever $\lambda_{G,1}/\lambda_{G,2}\ge \lambda_{K,1}/\lambda_{K,2}$ the refinement strategy takes advantage of the pronounced anisotropic behaviour of the gradient of the error, whereas if $\lambda_{K,1}/\lambda_{K,2}$ dominates then the aspect ratio of the element is reduced. Heuristically speaking, when $\lambda_{K,1}/\lambda_{K,2}$ dominates on $\lambda_{G,1}/\lambda_{G,2}$ the module {\tt REFINE} aims at identifying a situation where the anisotropy of the element is too pronounced (and possibly unnecessary) with respect to the (anisotropic) behaviour of the gradient of the error. A typical situation could be the presence of an anisotropic element in a region where an isotropic refinement is needed (i.e. the gradient of the error does not exhibit any preferential direction).

%Le us consider the spectral decomposition of the matrix
%$
%G_K=\lambda_{G,1}\vec{r}_{G,1}\vec{r}_{G,1}^\top
%+\lambda_{G,2}\vec{r}_{G,2}\vec{r}_{G,2}^\top.
%$
%And let us notice that if the gradient of the error is constant on the element $\lambda_{G,1}=||\nabla e||^2$ and
%$\vec{r}_{G,1}$ is the unit vector in the direction of $\nabla e$, the cut in the direction of $\vec{r}_{G,2}$
%cuts the element in a direction orthogonal to the gradient of the error reducing the factor $H_{G,K}^2.$
%The factor $H_{G,K}=\left(
%             \lambda_{K,1}\,\vec r_{K,1}^\top\, G_K(e)\, \vec r_{K,1}+
%             \lambda_{K,2}\,\vec r_{K,2}^\top\, G_K(e)\, \vec r_{K,2}
%             \right)^{1/2}\,.$ can be written as
%
%$$H_{G,K}^2
%=
%\lambda_{K,2}\lambda_{G,2}
%\left[
%  \frac{\lambda_{K,1}}{\lambda_{K,2}}\frac{\lambda_{G,1}}{\lambda_{G,2}}
%  (\vec r_{K,1}^\top\vec r_{G,1})^2
%  +\frac{\lambda_{K,1}}{\lambda_{K,2}}
%  (\vec r_{K,1}^\top\vec r_{G,2})^2
%  +\frac{\lambda_{G,1}}{\lambda_{G,2}}
%  (\vec r_{K,2}^\top\vec r_{G,1})^2
%  +(\vec r_{K,2}^\top\vec r_{G,2})^2\right].
%$$

Finally, we remark that whenever the estimator
$\eta^{\mathrm{iso}}_h$ is used to drive the adaptive procedure, the marked polygon is always cut along the direction $\vec{r}_{K,2}$.

\begin{algorithm}
  \caption{The Module {\tt REFINE}}
  \begin{flushleft}
    Given a marked cell $K$
  \end{flushleft}
  \begin{algorithmic}[1]
    \STATE Compute the barycenter $\bar \xv_K$
    \STATE Compute the
    tensor $G_K$ 
    \STATE Compute the covariance matrix $M_{\rm Cov}$ 
    \STATE Compute the eigenvalues of the two tensors
    \IF{$\left(\lambda_{G,1}/\lambda_{G,2}\right)\ge\left(\lambda_{K,1}/\lambda_{K,2}\right)
      $} 
     	\STATE  Build a straight line passing  through $\bar \xv_K$ and parallel to 
    $\vec{r}_{G,2}$ \label{alg:isotropic}
    	\ELSE\STATE Build a straight
    line passing through $\bar \xv_K$ and parallel to
    $\vec{r}_{K,2}$
    	 \ENDIF
    \STATE Refine the cell
  \end{algorithmic}
  \label{alg:Refinement}
\end{algorithm}

In the following test cases we employ
$$\tilde{e} = \| \nabla(u - \Pi^0_k u_h) \|_{\Omega} \qquad k=1,2$$
 as a
measure of the exact error and we iterate the adaptive process until
$\tilde{e} \leq 10^{-3}$.  }
%%%%%%%%%%%%%%%%%%%%%%%%%%%%%%%%%%%%%
\subsection{Test case 1}

In the first test we set the forcing term in such a way that the exact
solution is given by
\begin{equation*}
  u(x,y) = 10^{-6}x(1-x)(1-y)(\mathrm{e}^{10x} - 1)(\mathrm{e}^{10y} -1) \,.
\end{equation*}
In Figure~\ref{fig:test1-sol} we plot the exact solution, that
displays a peak in the top-right corner of the domain, with boundary
layers in the $x$ and $y$ directions.
%%%%%%%%%
\begin{figure}
  \centering \includegraphics[scale=0.7]{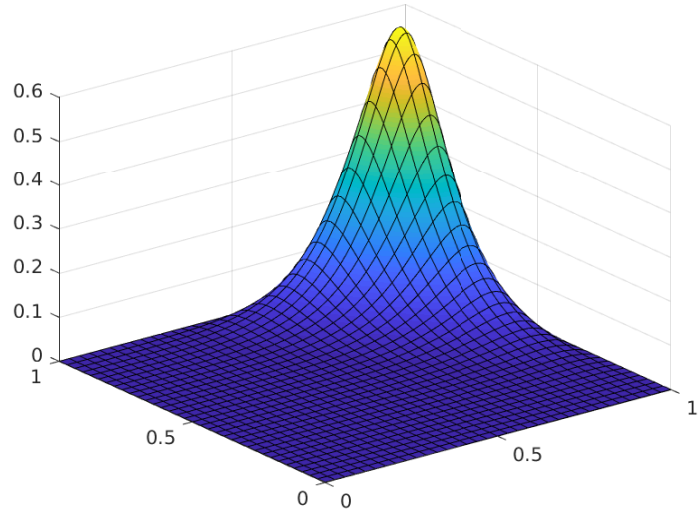}
  \caption{Test case 1. Plot of the exact solution}
  \label{fig:test1-sol}
\end{figure}
%%%%%%%%%
In Figure~\ref{fig:test1-estimator-th} we report the behavior
of the components of the error estimator $\stt$ defined as in \eqref{eq:def_est_theory} when
the adaptive process is run
using $\stt$ as estimator, based on employing both VEM of order 1 (cf. Figure~\ref{fig:test1-analysis:order1:estimcompeta2}) and of order 2 (cf. Figure~\ref{fig:test1-analysis:order2:estimcompeta2}).
%%%%%%%%%
\begin{figure}
  \centering
  \begin{subfigure}{.49\linewidth}
    \centering \includegraphics[width =
    \linewidth]{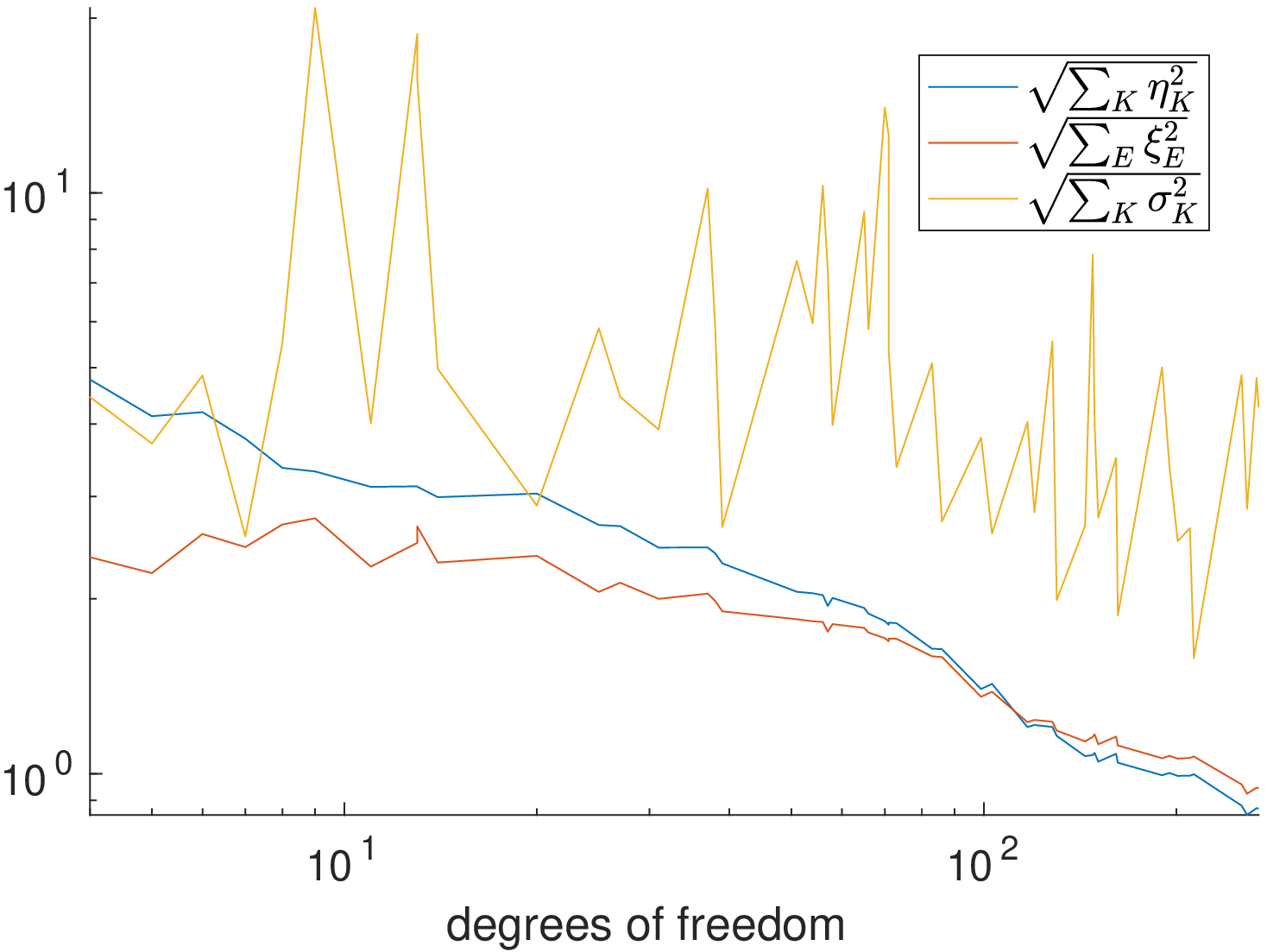}
    \caption{VEM of order 1.}
    \label{fig:test1-analysis:order1:estimcompeta2}
  \end{subfigure}
  \hfill
  \begin{subfigure}{.49\linewidth}
    \centering \includegraphics[width =
    \linewidth]{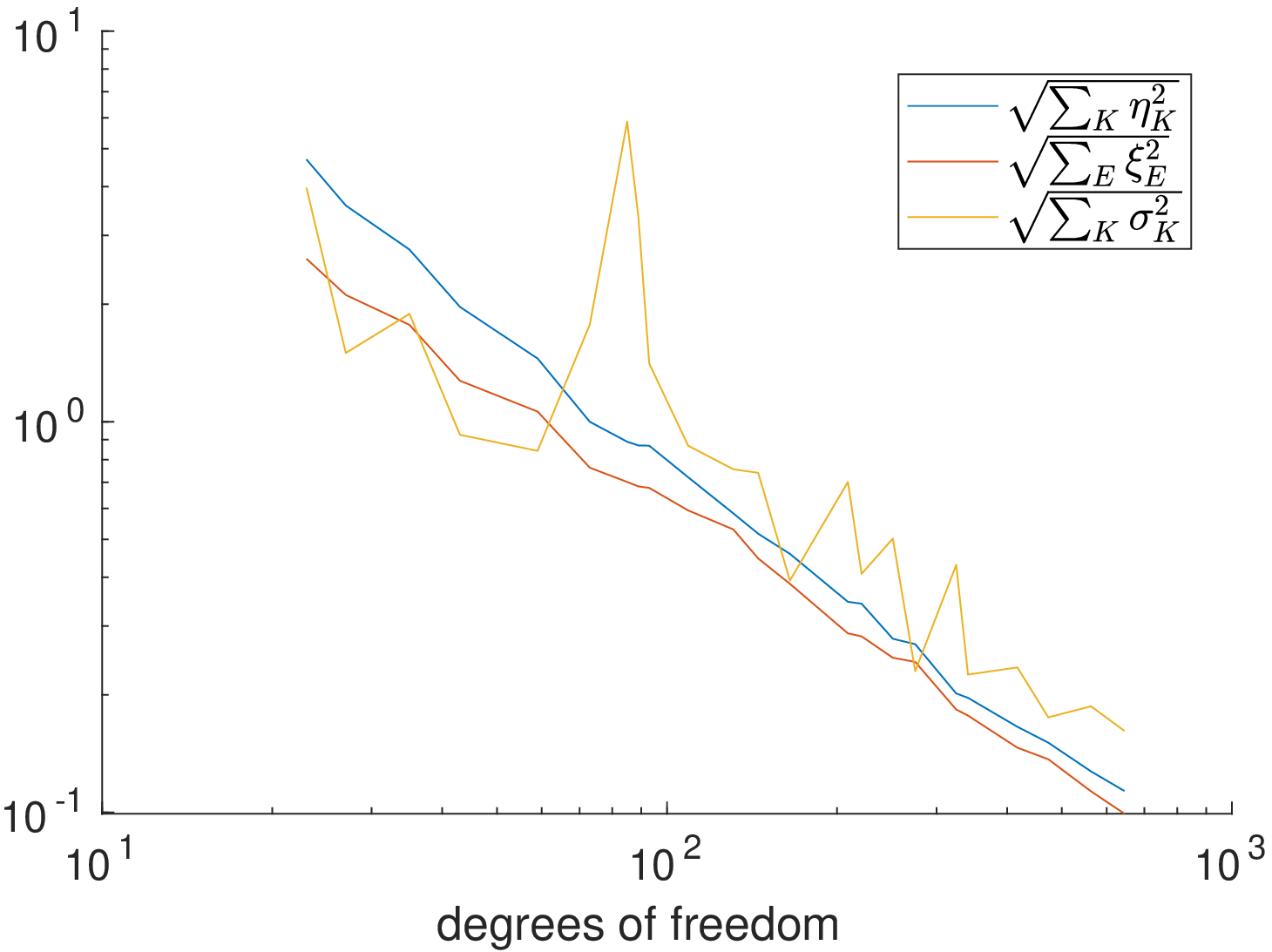}
    \caption{VEM of order 2.}
    \label{fig:test1-analysis:order2:estimcompeta2}
  \end{subfigure}
  \caption{Test case 1. Components of the error estimator $\stt$ as a function of the number of degrees of freedom, when the adaptive process is driven by the estimator $\stt$.}
  \label{fig:test1-estimator-th}
\end{figure}
%%%%%%%%%
From the numerical results reported in Figure~\ref{fig:test1-estimator-th}, we can infer that the three components of the estimator $\stt$ differ one from each other of an order of magnitude.  This
behavior is more evident for VEM of order 1, since the larger number
of refinements produces large anisotropic elements. 
A closer inspection, reveals that the factor $M_K$ could be much  larger than the other factors, and thus  the aspect ratio of polygons could largely increase during the anisotropic adaptive process and can drive the adaptive process itself.
The large aspect ratio of the elements produced is amplified by the exponent $\frac54$
that is present in $M_K$ that makes the VEM stabilization dominant
with respect to the other components of the estimator (cf. Remark \ref{rem:exponent}). 
The resulting behavior of the estimators might then be not optimal because the large values of
$\sigma_K$ cause the refinement process to concentrate on cells whose
error is already small.
Indeed, as we show in Figure
\ref{fig:test1-analysis-sigmaK}, the behavior of $\sigma_K$
when the refinement process is driven by $\sth$ tends to be parallel 
to the one of $\eta_K$ and $\xi_E$,
showing that the behavior of the estimate $\stt$ is
caused by the fact that the estimator $\sigma_K$ is selecting for
refinement elements that should not be selected.
%For the rest of the section we present the results obtained with $\sth$.
Notice that the above definition of the $\tilde{\sigma}_K$ estimator
{ used in $\sth$}
correspond to the one given in Proposition \ref{main:th},
setting $M_K = 1$.\\

%{\color{blue} non è una ripetizione?}
%To support further the validity of the \emph{heuristically scaled estimator}, we have repeated the same test as before, and now 
%in Figure~\ref{fig:test1-analysis-sigmaK} we report the computed values of $\eta_{K}$,  $\xi_E$, $\sigma_K$ as a function of the number of degrees of freedom based on employing the \emph{heuristically scaled estimator} $\sth$ defined in \eqref{eq:def_est_heur} to drive the adaptive algorithm.\\
%%%%%%%%%
\begin{figure}
  \centering
  \begin{subfigure}{.49\linewidth}
    \centering
    \includegraphics[width=\linewidth]{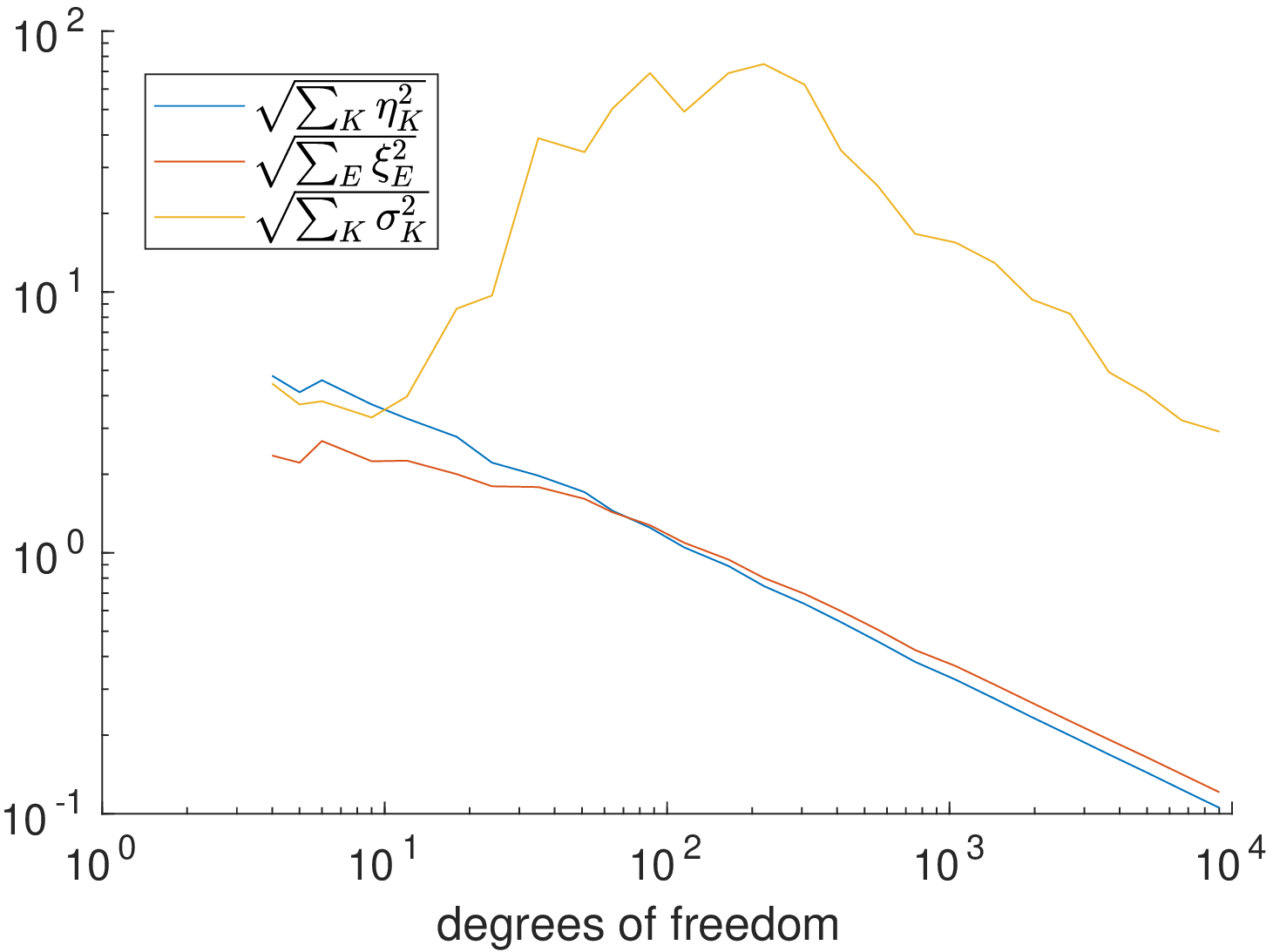}
    \caption{VEM of order 1.}
    \label{fig:test1-analysis:order1:aspectratiostabeta1}
  \end{subfigure}
  \hfill
  \begin{subfigure}{.49\linewidth}
    \centering
    \includegraphics[width=\linewidth]{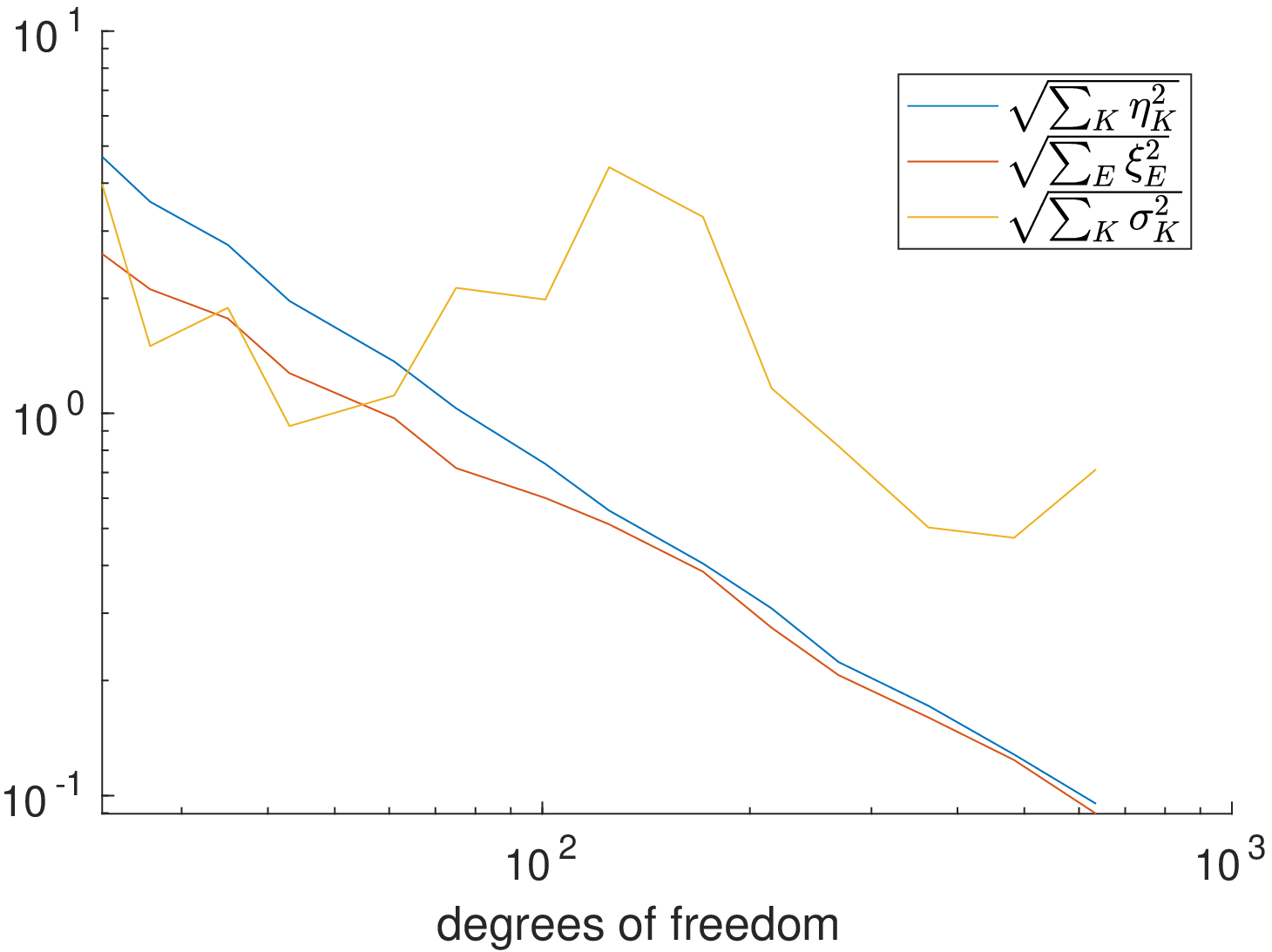}
    \caption{ VEM of order 2.}
    \label{fig:test1-analysis:order2:aspectratiostabeta1}
  \end{subfigure}
  \caption{ Test case 1.  Computed values of $\eta_{K}$,  $\xi_E$ and $\sigma_K$ as a function of the number of degrees of freedom based on employing the \emph{heuristically scaled estimator} $\sth$ defined in \eqref{eq:def_est_heur} to drive the adaptive algorithm.}
  \label{fig:test1-analysis-sigmaK}
\end{figure}
%%%%

We next analyze the behavior of the error based on employing the \emph{heuristically scaled estimator} defined in \eqref{eq:def_est_heur} to drive the adaptive algorithm.
In Figure~\ref{fig:test1-solutions} we report the color-plot of the solutions and
the meshes obtained at the first refinement iteration and at an
intermediate adaptive step, based on employing VEM of order 1 and 2, respectively.
%%%%
\begin{figure}
  \centering
  \begin{subfigure}[b]{.49\linewidth}
    \centering
    \includegraphics[width=\linewidth]{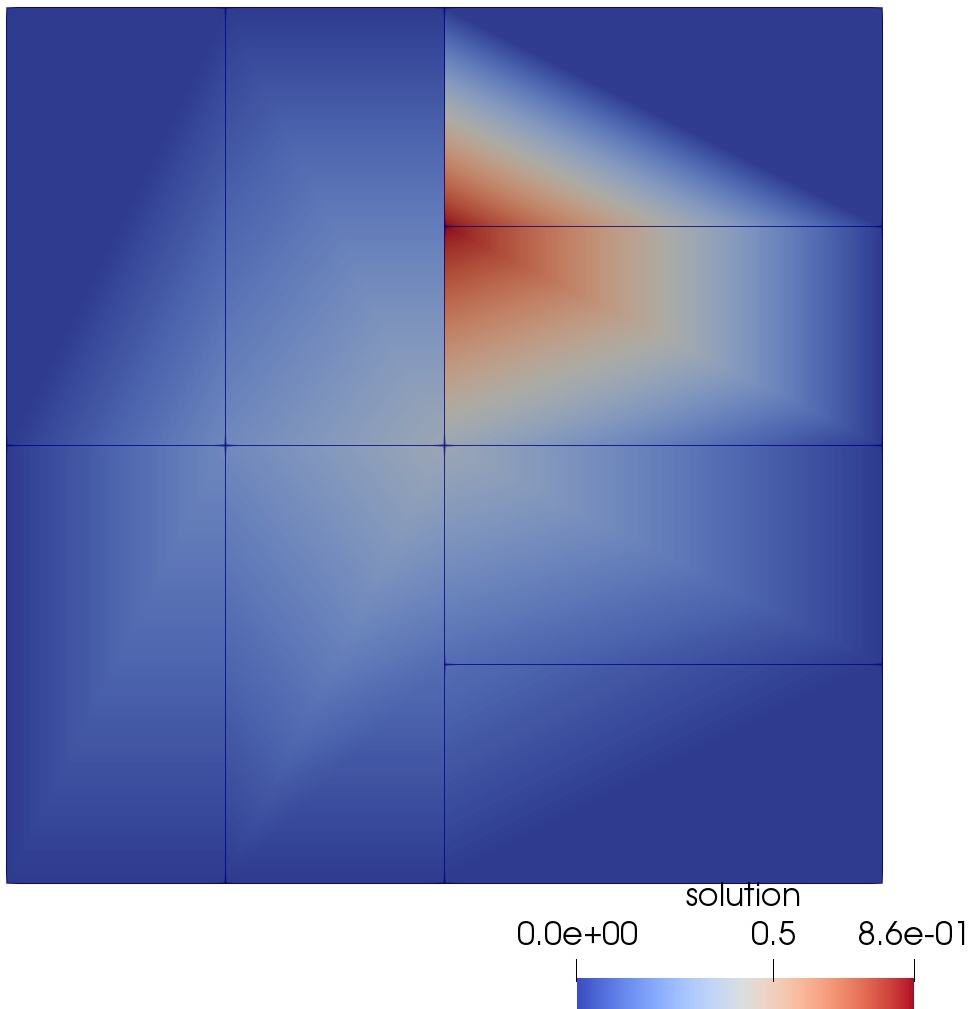}
    \caption{VEM of order 1. Adaptive step n. 1.}
  \end{subfigure}
  \begin{subfigure}[b]{.49\linewidth}
    \centering
    \includegraphics[width=\linewidth]{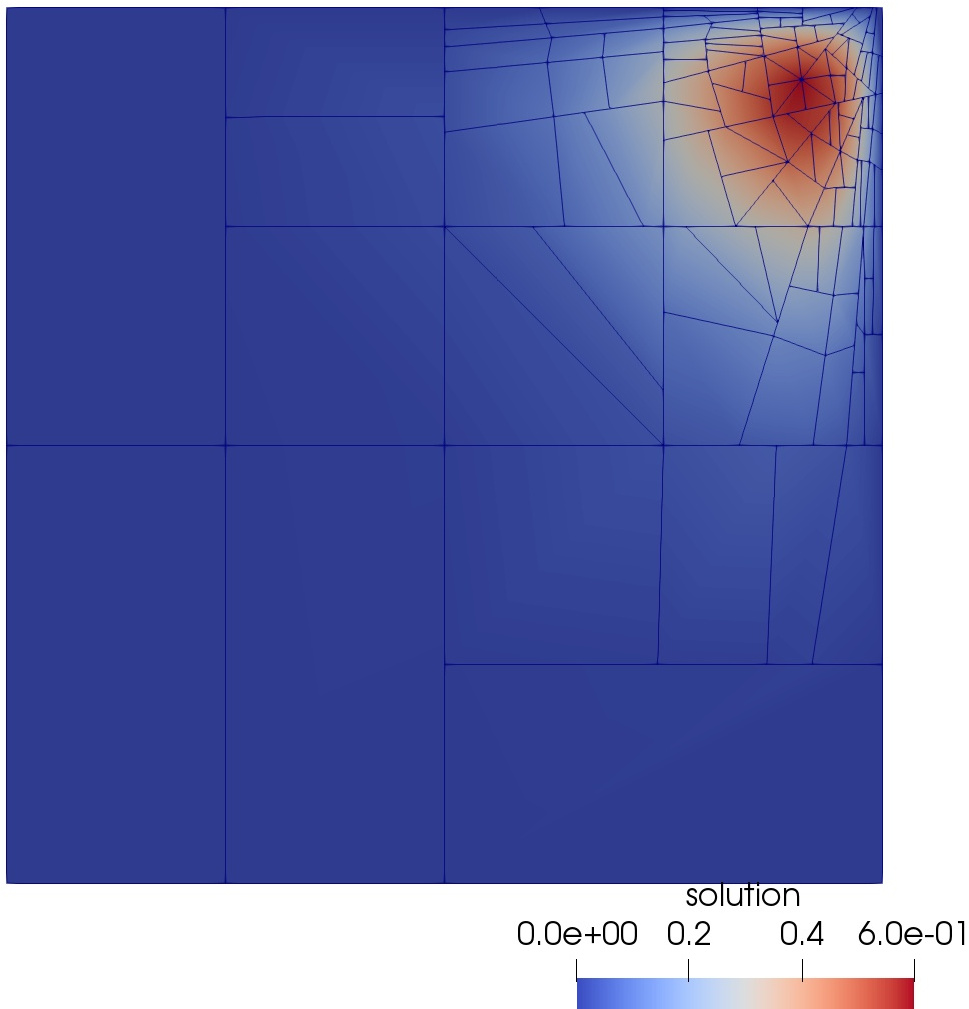}
    \caption{VEM of order 1. Adaptive step n. 12.}
  \end{subfigure}
  \begin{subfigure}[b]{.49\linewidth}
    \centering
    \includegraphics[width=\linewidth]{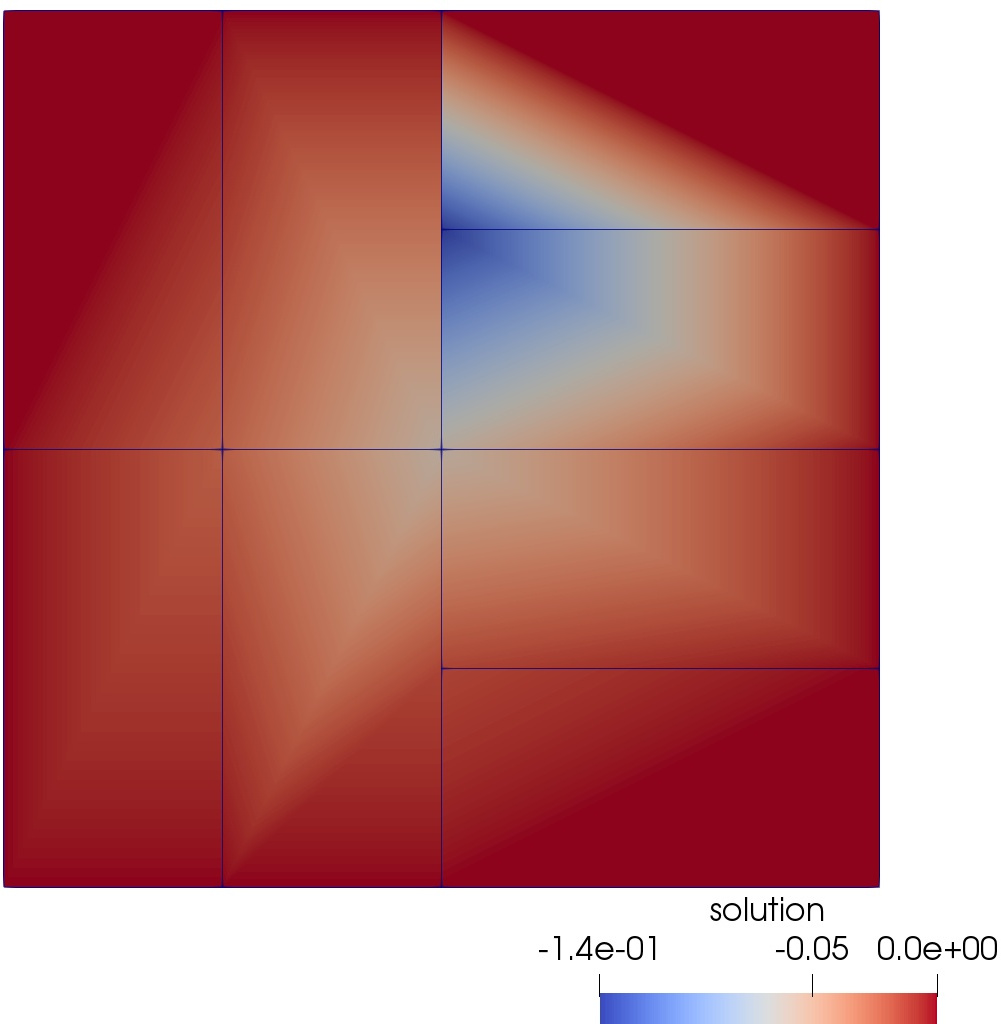}
    \caption{VEM order 2. Adaptive step n. 1.}
  \end{subfigure}
  \begin{subfigure}[b]{.49\linewidth}
    \centering
    \includegraphics[width=\linewidth]{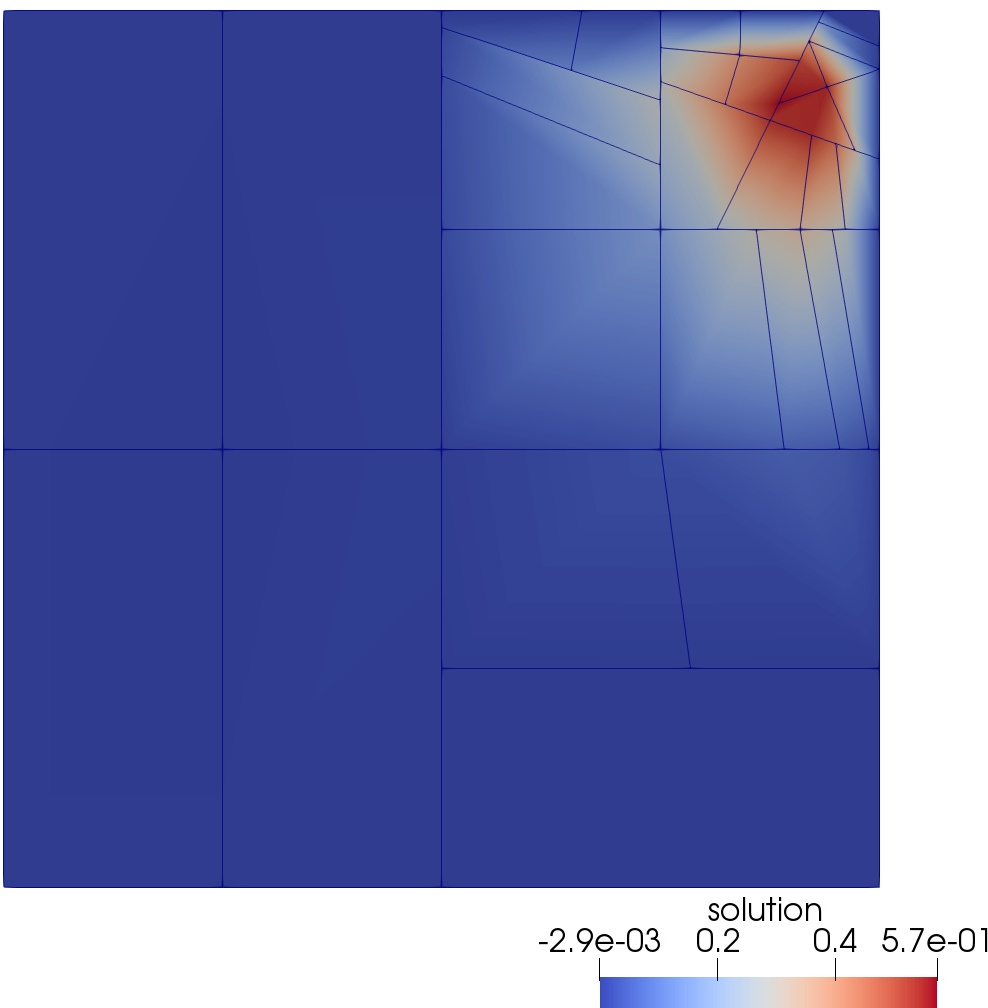}
    \caption{VEM order 2. Adaptive step n. 7.}
  \end{subfigure}
  \caption{Test case 1. Computed solutions and corresponding anisotropic grids at different steps of the adaptive algorithm based on employing the \emph{heuristically scaled estimator} $\sth$ defined in \eqref{eq:def_est_heur} to drive the adaptive algorithm.}
  \label{fig:test1-solutions}
\end{figure}
%%%%
A zoom of a detail of the computed anisotropic mesh as well as the computed solution at the final adaptive step,
are reported in Figure~\ref{fig:test1-solutions-zoom}, again employing VEM of order 1 and 2.
 %%%%
\begin{figure}
  \centering
  \begin{subfigure}[b]{.32\linewidth}
    \centering
    \includegraphics[width=\linewidth]{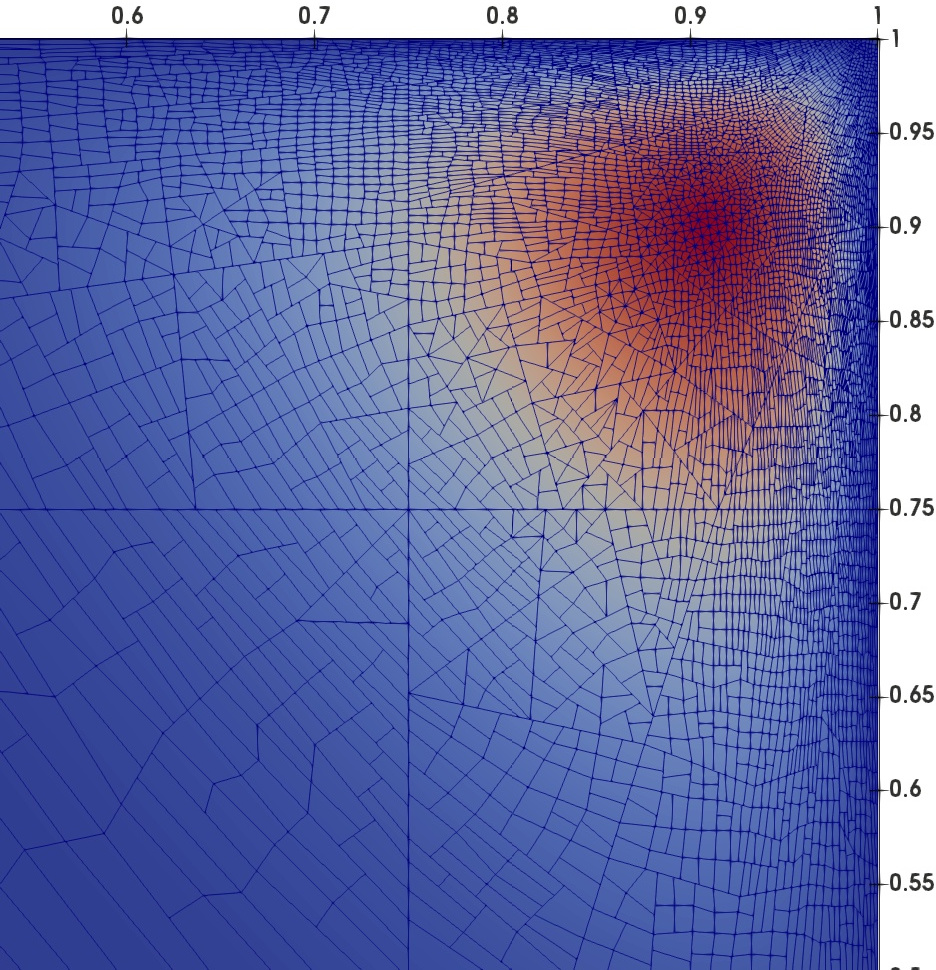}
    \caption{VEM order 1. Adaptive step n. 25.}
  \end{subfigure}
  \begin{subfigure}[b]{.32\linewidth}
    \centering
    \includegraphics[width=\linewidth]{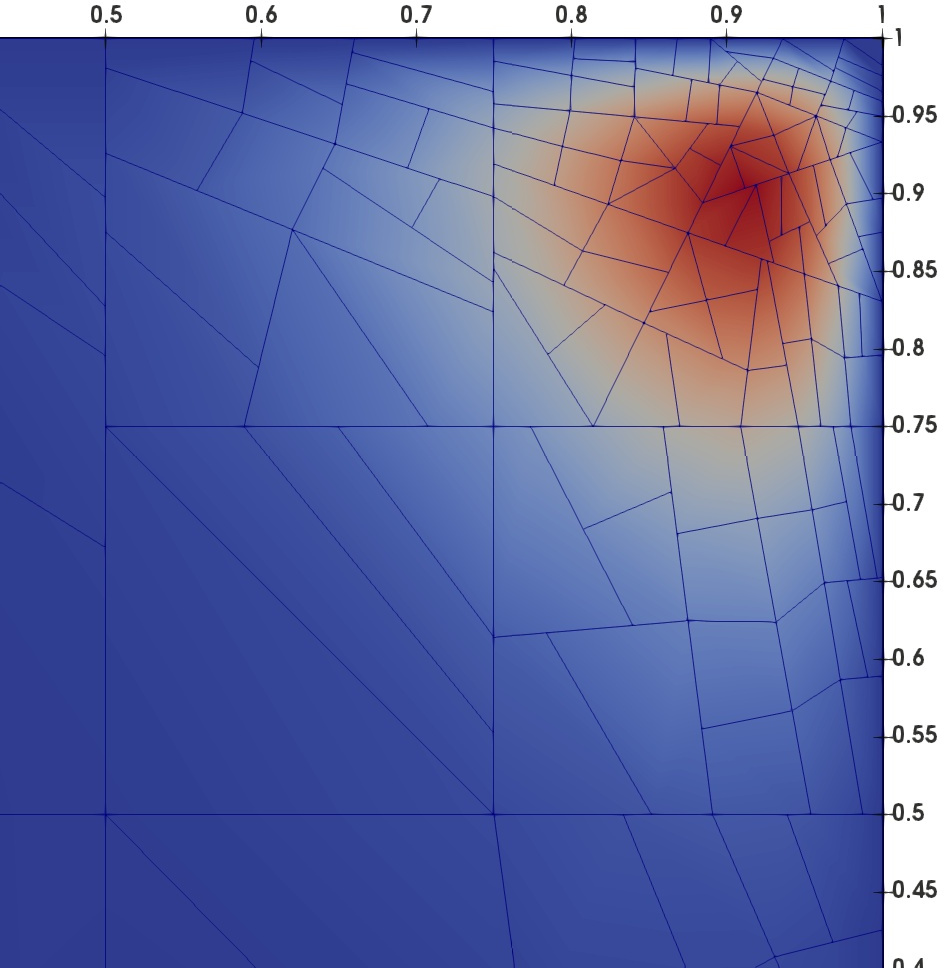}
    \caption{VEM order 2. Adaptive step n. 13}
  \end{subfigure}
  \caption{Test case 1. Zoom of the computed solutions and corresponding anisotropic grids at the final step of the adaptive algorithm based on employing the \emph{heuristically scaled estimator} $\sth$ defined in \eqref{eq:def_est_heur} to drive the adaptive algorithm.}
  \label{fig:test1-solutions-zoom}
\end{figure}
%%%%

We next compared the behavior of the computed estimator and of the  error as a function of the number of the degrees of freedom, again based on employing the \emph{heuristically scaled estimator} defined in \eqref{eq:def_est_heur} to drive the adaptive algorithm. 
In Figure~\ref{fig:test1-analysis:totalerrors} we report the computed
estimator $\sth$  and the computed error $\tilde{e}$, plotted
against the number of degrees of freedom as well as the  convergence
rates (computed based on employing a least square fitting). For the sake of comparison, we also report the analogous quantities obtained with the isotropic error estimator $\eta^{\mathrm{iso}}_h$ defined in \eqref{eq:def_est_iso}, and denoted by $\eta^{\mathrm{iso}}_h$ and $\tilde{e}^{\mathrm{iso}}$, respectively.
We observe, as expected,  that the isotropic adaptive process requires a larger
number of degrees of freedom to reduce the error below a given tolerance compared with the anisotropic error estimator.
%%%%%%%%%%%%
\begin{figure}
  \centering
  \begin{subfigure}{.49\linewidth}
    \centering \includegraphics[width =
    \linewidth]{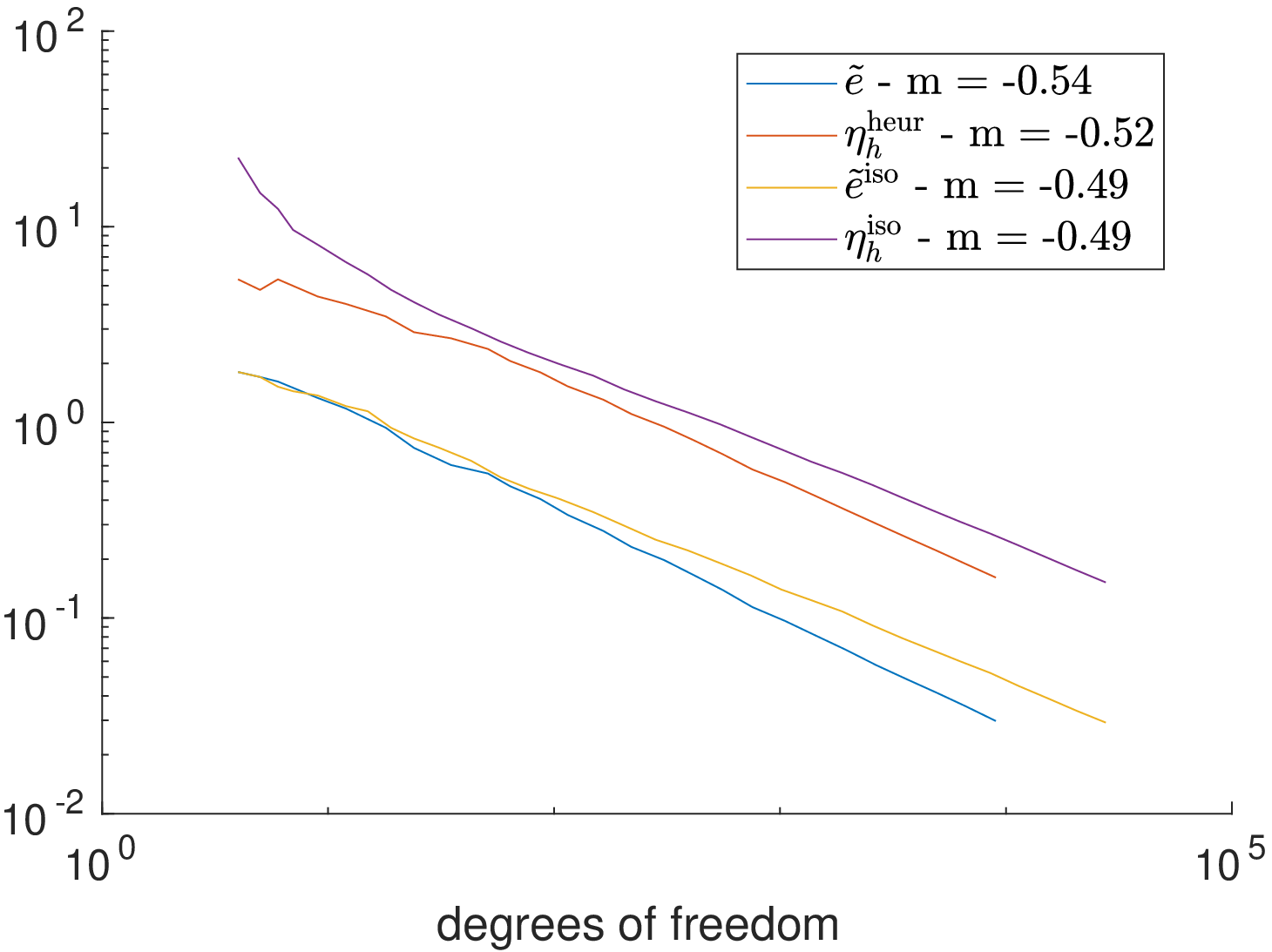}
    \caption{VEM of order 1.}
    \label{fig:test1-analysis:order1:totalerrors}
  \end{subfigure}
  \hfill
  \begin{subfigure}{.49\linewidth}
    \centering \includegraphics[width =
    \linewidth]{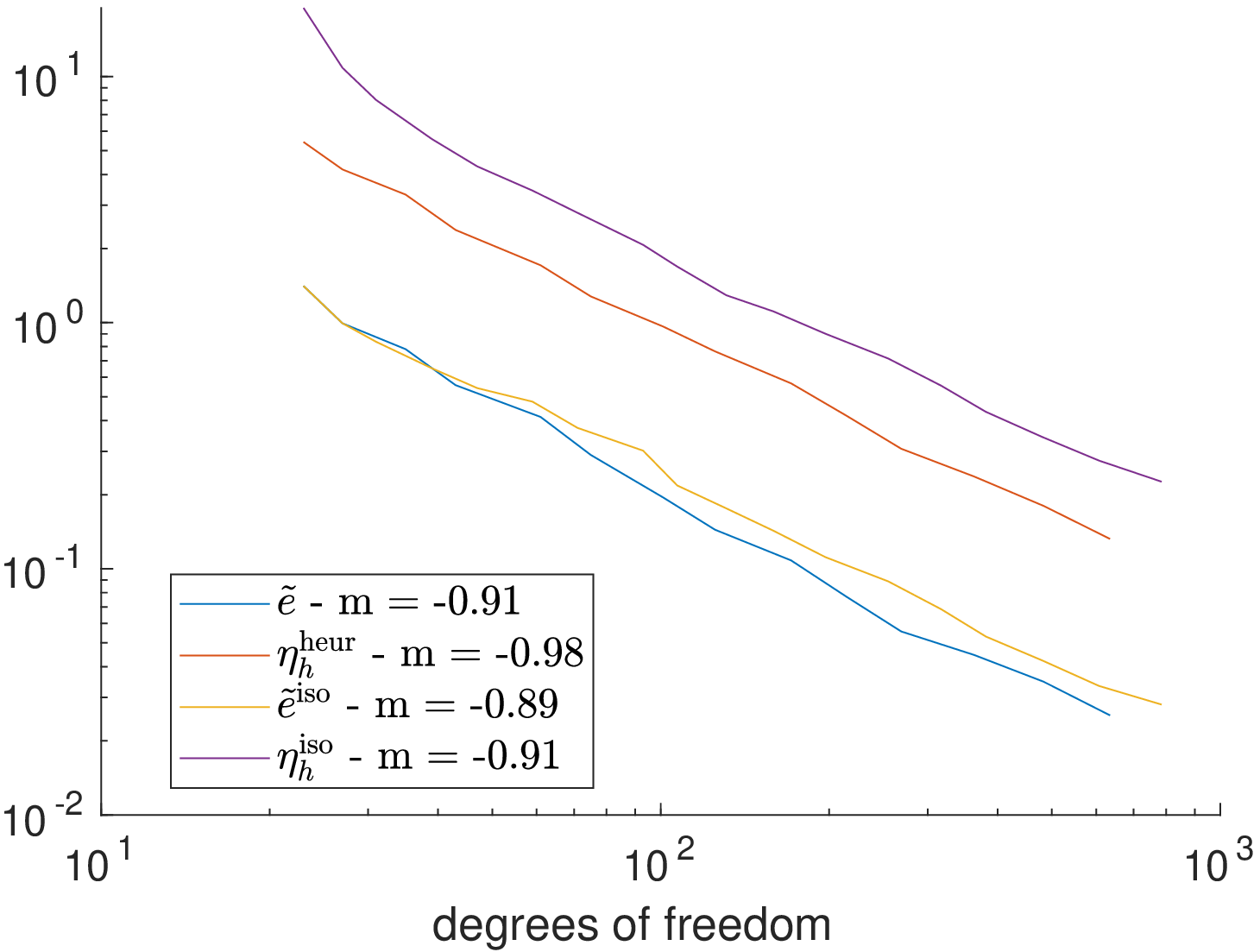}
    \caption{VEM of order  2}
    \label{fig:test1-analysis:order2:totalerrors}
  \end{subfigure}    
   \caption{Test case 1. Computed values of the estimator $\sth$, computed errors $\widetilde{e}$ based on employing the exact solution, and corresponding computed convergence rates \texttt{m} as a function of the number of degrees of freedom. The results are compared with the analogous quantities obtained based on employing the isotropic error estimator $\eta^{\mathrm{iso}}_h$ defined in \eqref{eq:def_est_iso}.}
    \label{fig:test1-analysis:totalerrors}
\end{figure}

\FloatBarrier

%%%%%%%%%%%%%%%%%%%%%%%%
%%%%%%%%%%%%%%%%%%%%%%%%
%%%%%%%%%%%%%%%%%%%%%%%%%
%%%%%%%%%%%%%%%%%%%%%%%%%
%%%%%%%%%%%%%%%%%%%%%%%%%
%%%%%%%%%%%%%%%%%%%%%%%%%
%%%%%%%%%%%%%%%%%%%%%%%.
%%%%%%%%%%%%%%%%%%%%%%%%%

\subsection{Test case 2}
We have repeated the same set of experiments of the previous section, now choosing the forcing term in such a way that the exact solution is given by
\begin{equation*}
 u(x,y) = 10^{-2}xy(1-x)(1-y)(\mathrm{e}^{10x} - 1).
\end{equation*}
We notice that the exact solution of the proposed test case  exhibits a steep boundary layer in the $x$-direction close to the
right boundary of the domain (see Figure~\ref{fig:test2-sol}). 
%%%%%%%%%
\begin{figure}
  \centering \includegraphics[scale=0.7]{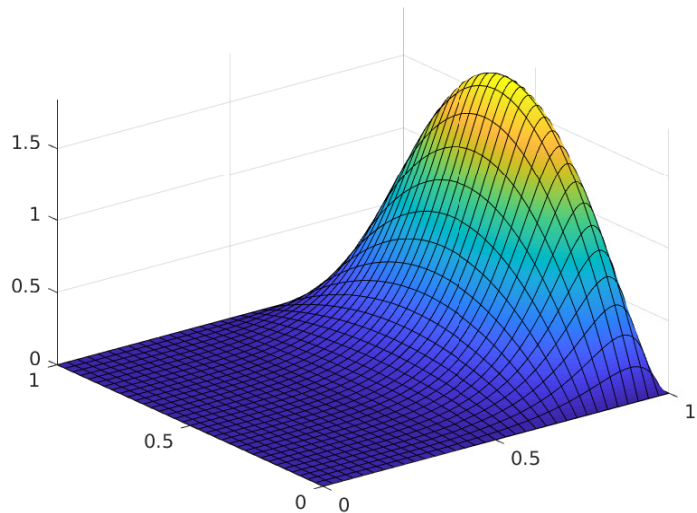}
  \caption{Test case 2. Plot of the exact solution}
  \label{fig:test2-sol}
\end{figure}
%%%%%%%%%
Next, we report the color-plot of the computed solutions and
the corresponding meshes at the initial step of the adaptive algorithm, and after 16 (resp. 9),  iterations based on employing VEM of order 1 (resp. 2), 
and using  the \emph{heuristically scaled estimator} $\sth$ defined in \eqref{eq:def_est_heur} to drive the adaptive process; cf.  Figure~\ref{fig:test2-solutions}.
%%%%
\begin{figure}
  \centering
  \begin{subfigure}[b]{.49\linewidth}
    \centering
    \includegraphics[width=\linewidth]{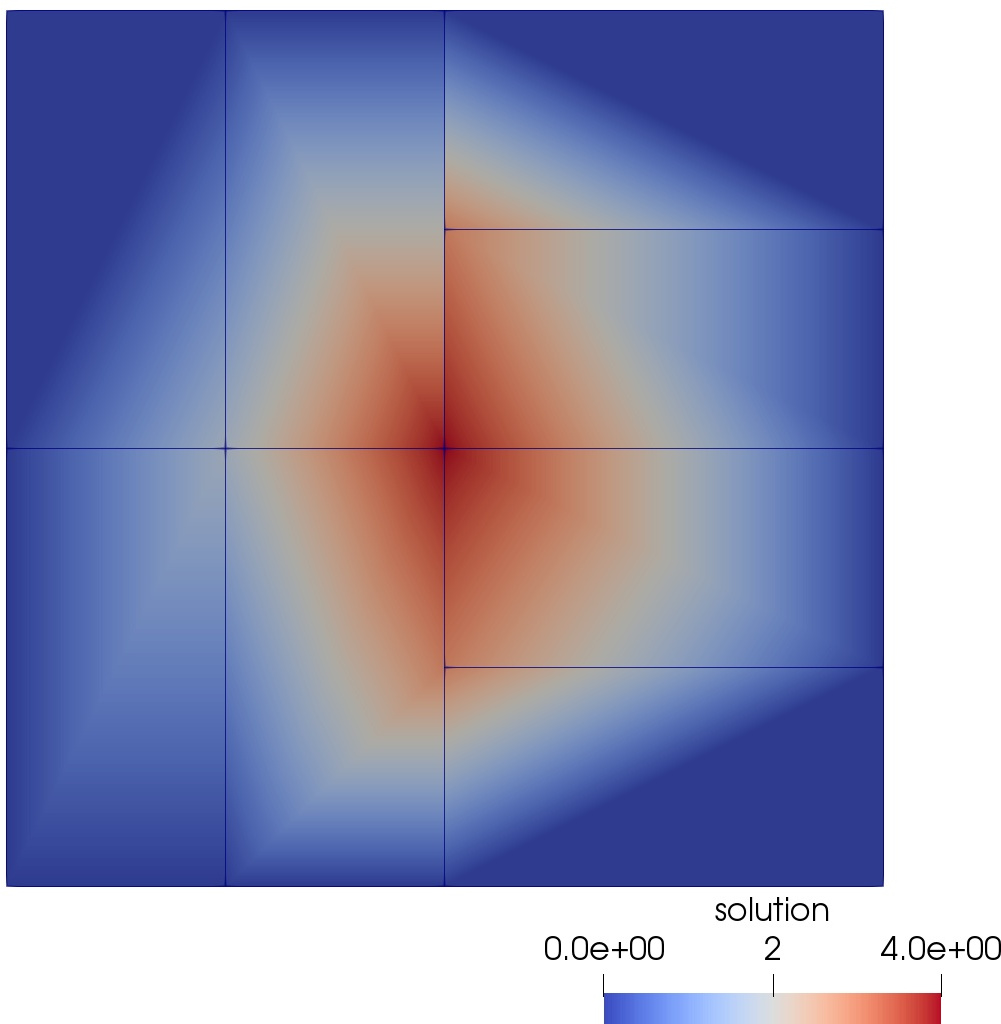}
    \caption{VEM of order 1. Adaptive step n. 1.}
  \end{subfigure}
  \begin{subfigure}[b]{.49\linewidth}
    \centering
    \includegraphics[width=\linewidth]{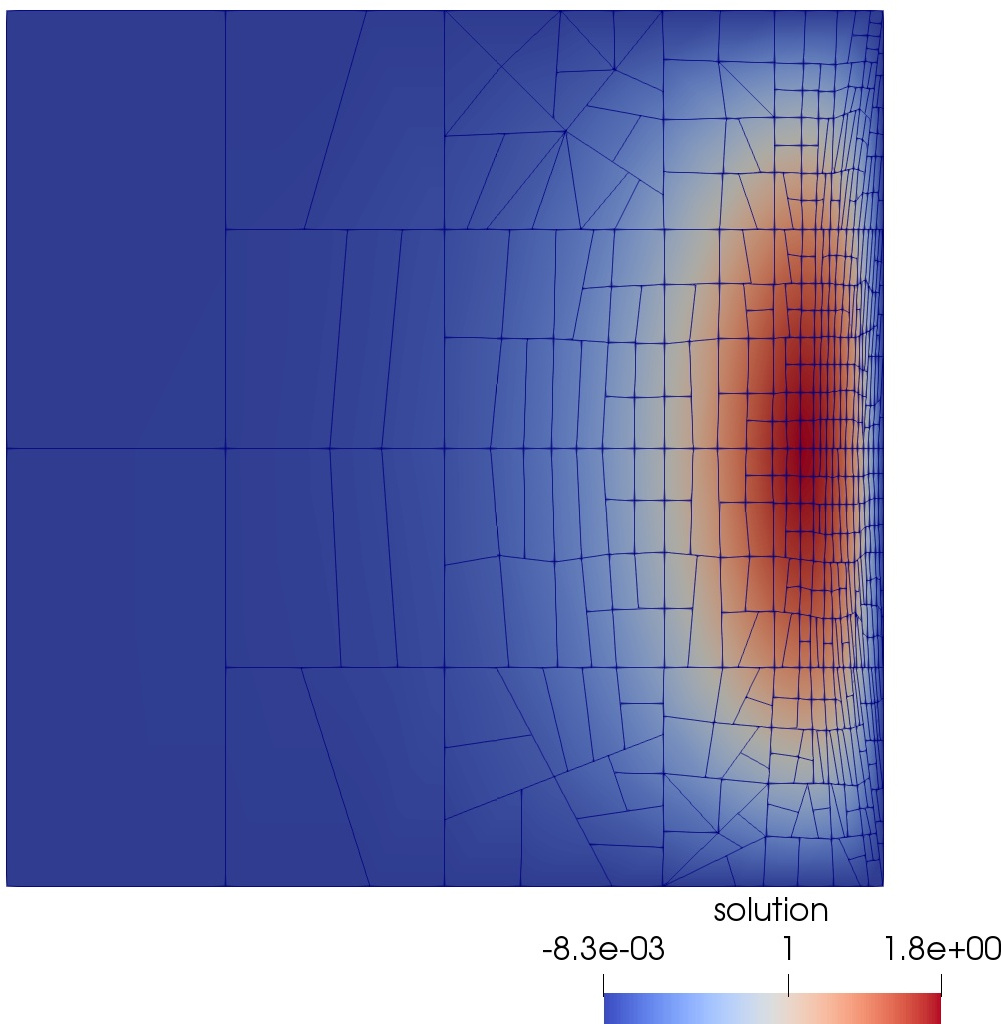}
    \caption{VEM of order 1. Adaptive step n. 16.}
  \end{subfigure}
  \begin{subfigure}[b]{.49\linewidth}
    \centering
    \includegraphics[width=\linewidth]{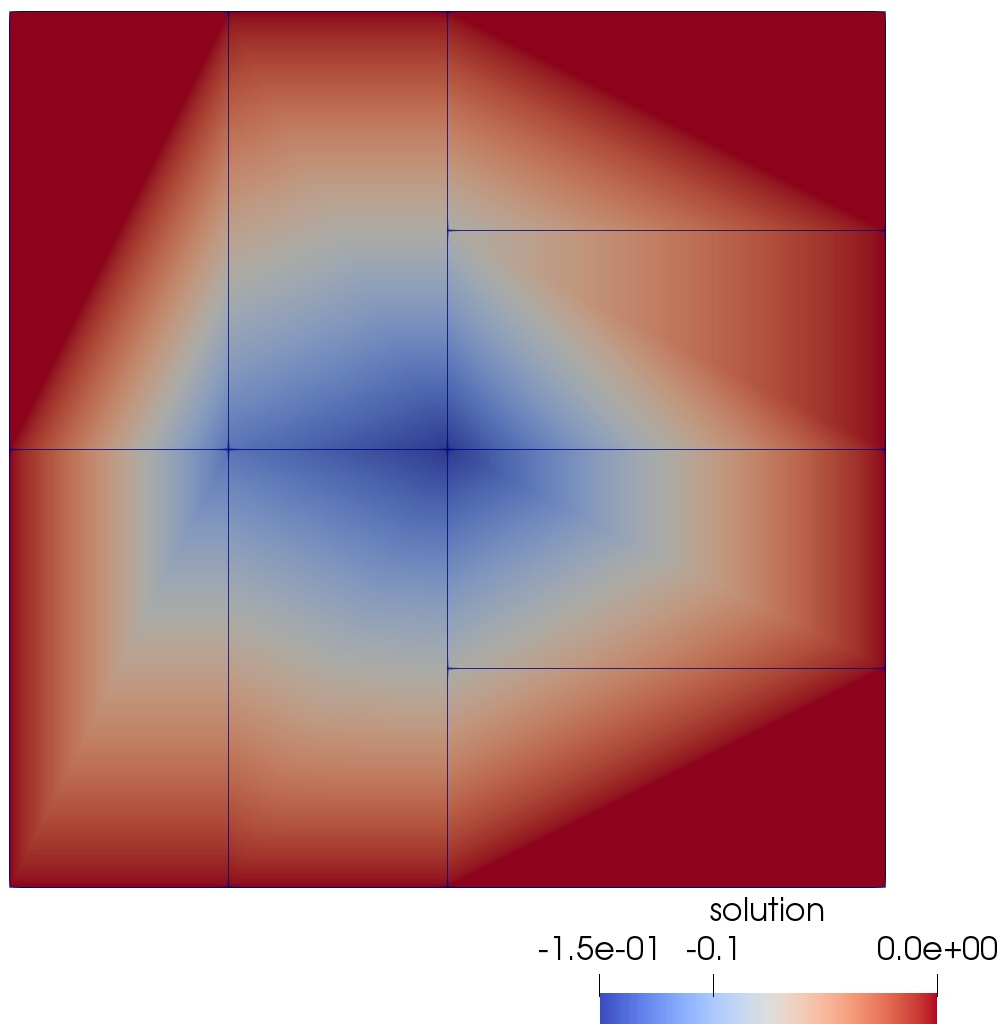}
    \caption{VEM order 2. Adaptive step n. 1.}
  \end{subfigure}
  \begin{subfigure}[b]{.49\linewidth}
    \centering
    \includegraphics[width=\linewidth]{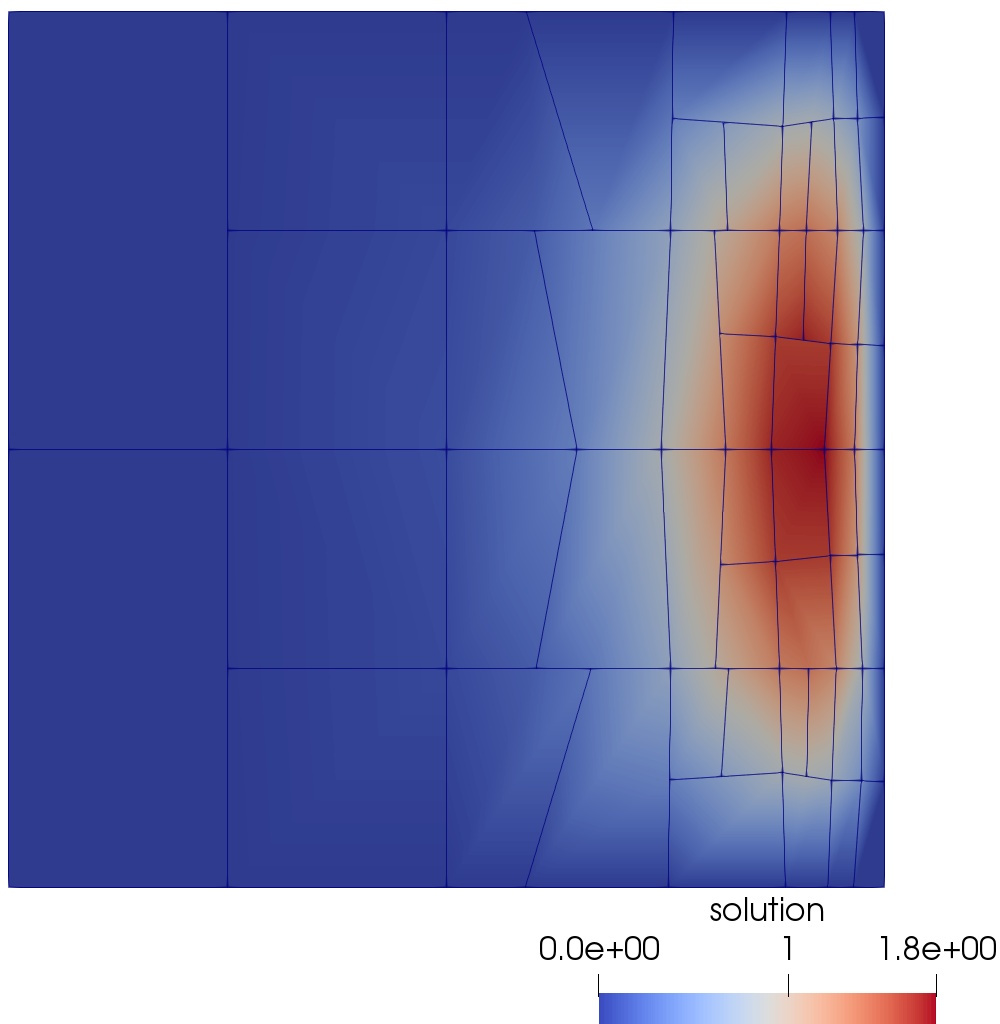}
    \caption{VEM order 2. Adaptive step n. 9.}
  \end{subfigure}
  \caption{Test case 2. Computed solutions and corresponding anisotropic grids at different steps of the adaptive algorithm based on employing the \emph{heuristically scaled estimator} $\sth$ defined in \eqref{eq:def_est_heur} to drive the adaptive algorithm.}
  \label{fig:test2-solutions}
\end{figure}
%%%%%%
A zoom of a detail of the computed anisotropic mesh as well as the corresponding computed solution at the final step of the adaptive algorithm
are reported in Figure~\ref{fig:test2-solutions-zoom}, again employing VEM of order 1 (Figure~\ref{fig:test2-solutions-zoom}, top) and VEM of order 2 (Figure~\ref{fig:test2-solutions-zoom}, bottom).
 %%%%
\begin{figure}
  \centering
  \begin{subfigure}[b]{.32\linewidth}
    \centering
    \includegraphics[width=\linewidth]{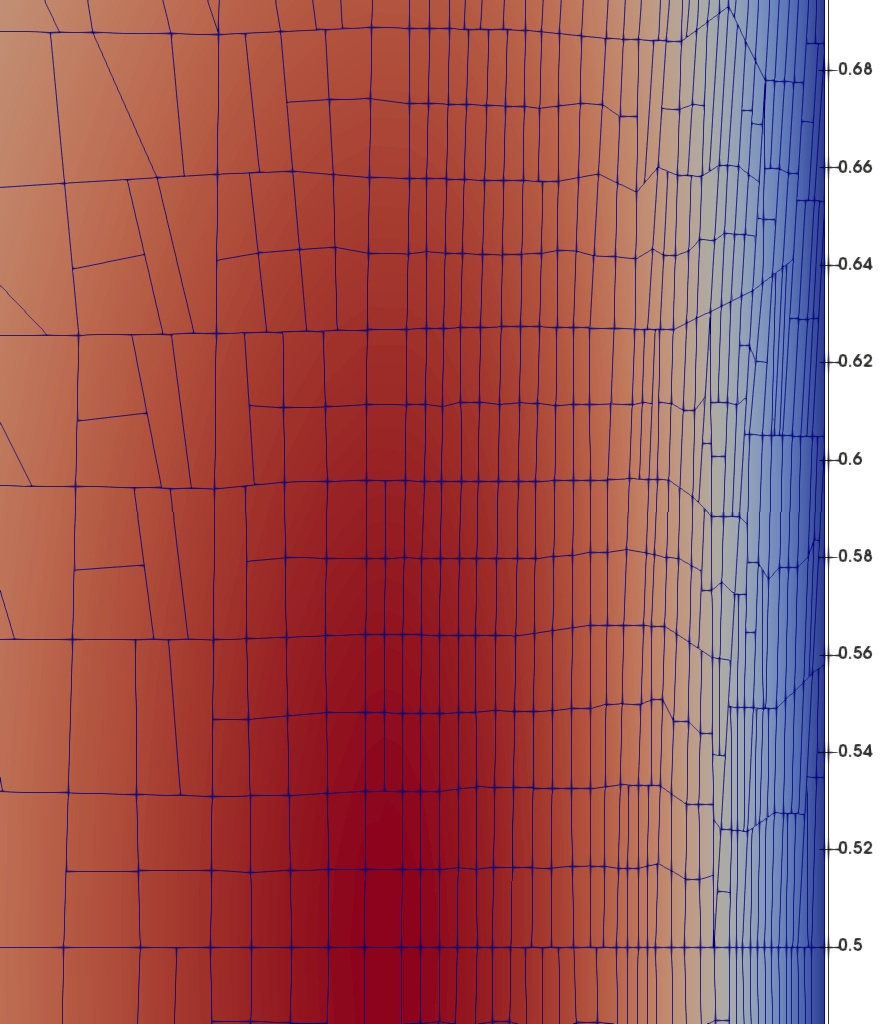}
    \caption{VEM order 1. Adaptive step n. 21.}
  \end{subfigure}
  \begin{subfigure}[b]{.32\linewidth}
    \centering
    \includegraphics[width=\linewidth]{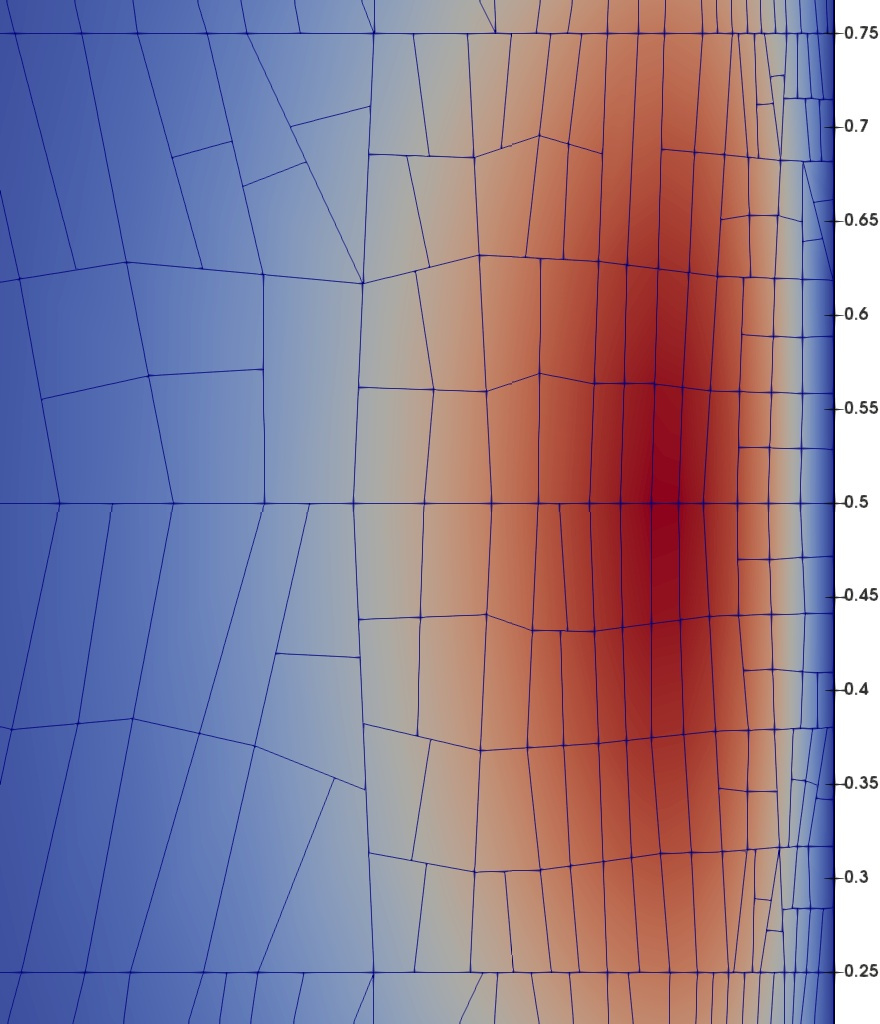}
    \caption{VEM order 2. Adaptive step n. 13}
  \end{subfigure}
  \caption{Test case 2. Zoom of the computed solutions and corresponding anisotropic grids at the final step of the adaptive algorithm based on employing the \emph{heuristically scaled estimator} $\sth$ defined in \eqref{eq:def_est_heur} to drive the adaptive algorithm.}
  \label{fig:test2-solutions-zoom}
\end{figure}
%%%%%%%%%%%%%%%%%%%
%%%%%%%%%%%%%%%%%%%
Finally, we compare the behavior of the computed estimator and of the  error as a function of the number of the degrees of freedom.  
In Figure~\ref{fig:test2-analysis:totalerrors} we report the 
estimator $\sth$  and the  error $\tilde{e}$, plotted
against the number of degrees of freedom as well as the computed convergence
rates. As before, we compare these results with the analogous quantities obtained with the isotropic error estimator $\eta^{\mathrm{iso}}_h$ defined in \eqref{eq:def_est_iso}. These results have been obtained with VEM of order 1, cf. Figure~\ref{fig:test2-analysis:order1:totalerrors} and with VEM of order 2, cf. Figure~\ref{fig:test2-analysis:order2:totalerrors}.
We observe, as expected,  that the isotropic adaptive process requires a larger
number of degrees of freedom to reduce the error below a given tolerance compared with the anisotropic error estimator.
%%%%%%%%%%%%
\begin{figure}
  \centering
  \begin{subfigure}{.49\linewidth}
    \centering \includegraphics[width =
    \linewidth]{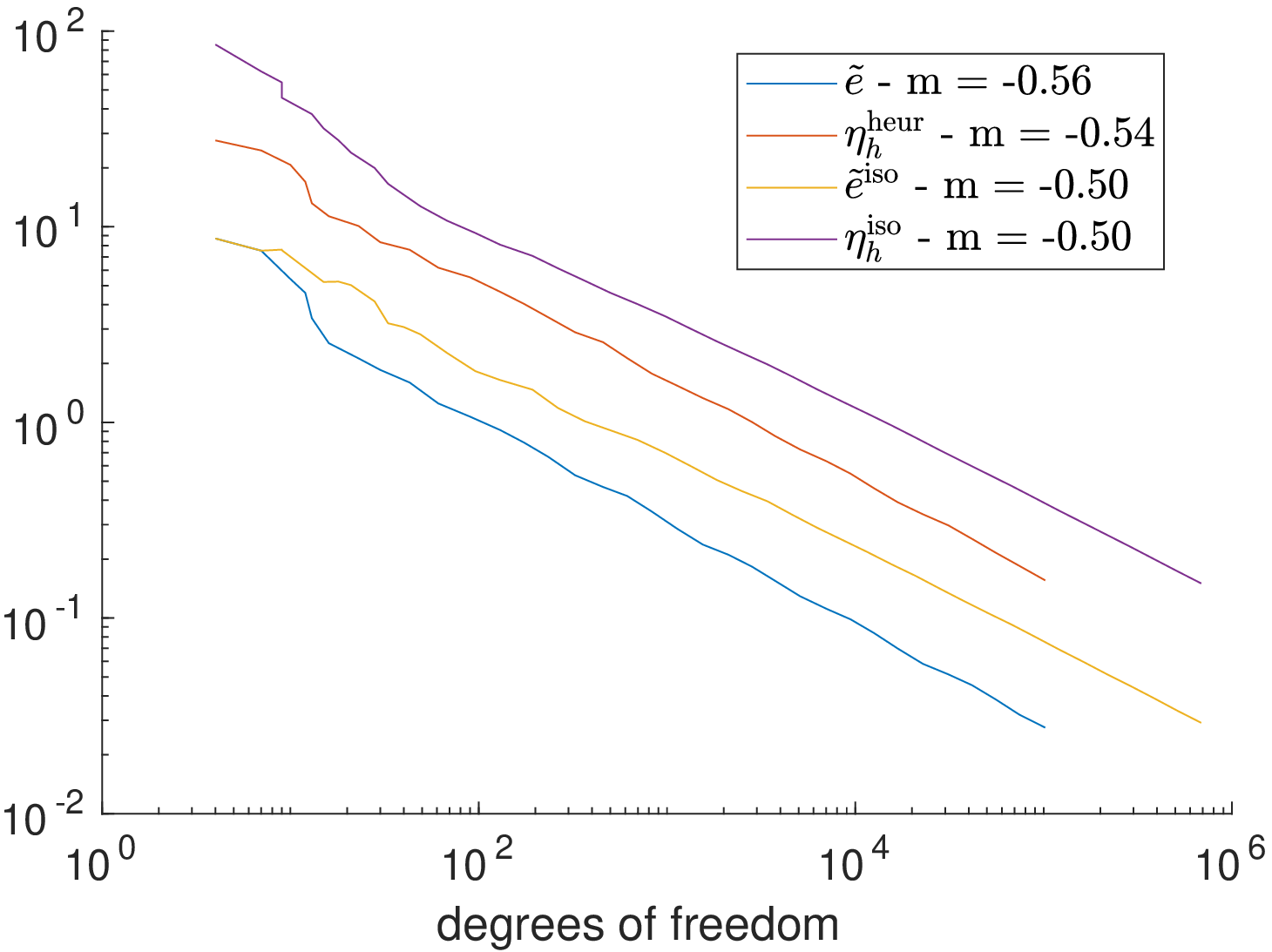}
    \caption{VEM of order 1.}
    \label{fig:test2-analysis:order1:totalerrors}
  \end{subfigure}
  \hfill
  \begin{subfigure}{.49\linewidth}
    \centering \includegraphics[width =
    \linewidth]{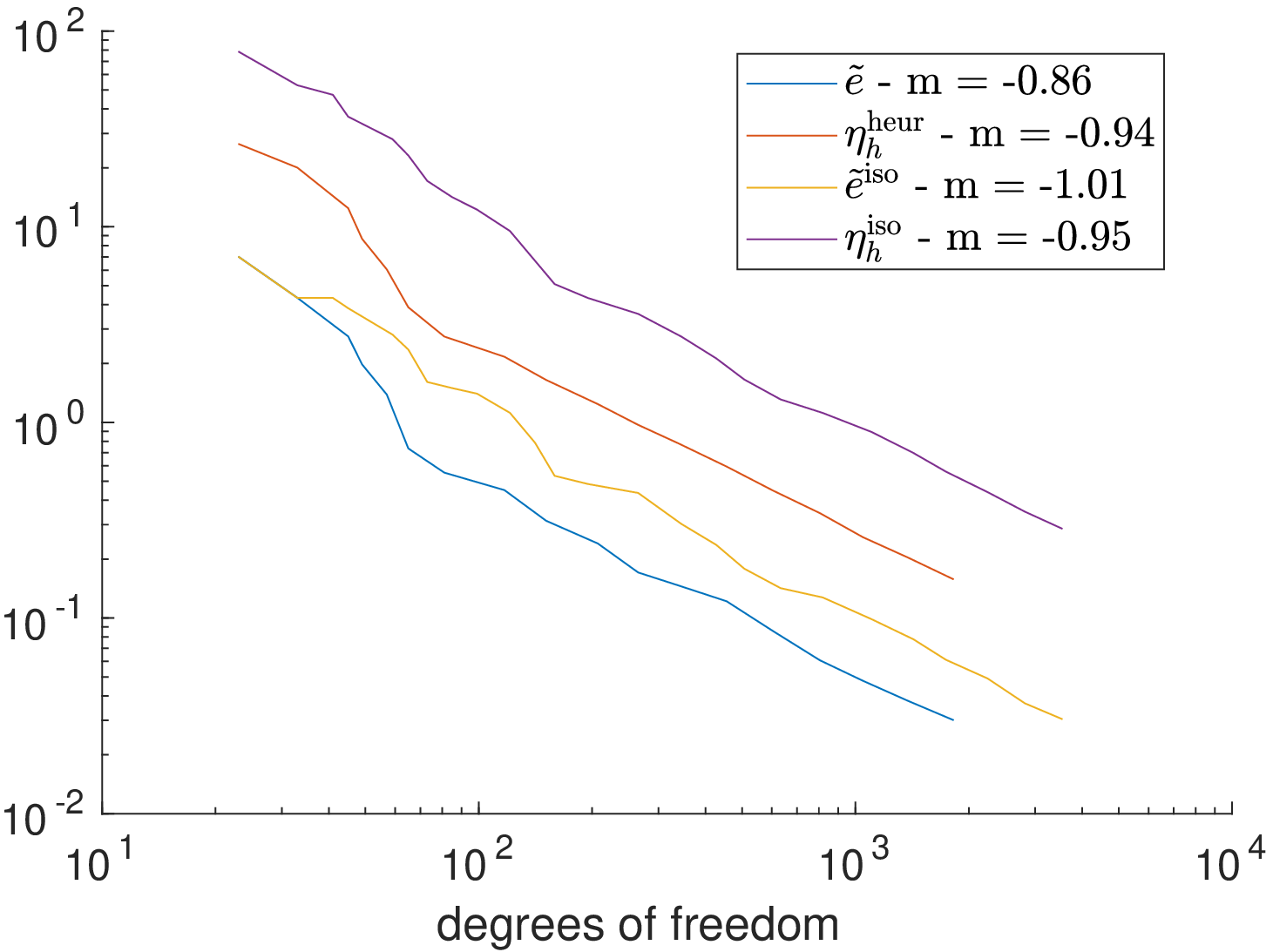}
    \caption{VEM of order  2} 
    \label{fig:test2-analysis:order2:totalerrors}    
  \end{subfigure}
   \caption{Test case 2. Computed values of the estimator $\sth$, computed errors $\widetilde{e}$ based on employing the exact solution, and corresponding computed convergence rates \texttt{m} as a function of the number of degrees of freedom. The results are compared with the analogous quantities obtained based on employing the isotropic error estimator $\eta^{\mathrm{iso}}_h$ defined in \eqref{eq:def_est_iso}.}
   \label{fig:test2-analysis:totalerrors}
\end{figure}
%%%%%%%%%%%%%5
\FloatBarrier

%%%%%%%%%%%%%
%%%%%%%%%%%%%
%%%%%%%%%%%%%
%%%%%%%%%%%%%
%%%%%%%%%%%%%
%%%%%%%%%%%%%
%%%%%%%%%%%%%
\subsection{Test case 3}
We have repeated the same set of experiments of the previous section, now choosing the forcing term in such a way that the exact solution is given by
\begin{equation*}
u(x,y) = 10^{-2}xy(x-1)(y-1)(\mathrm{e}^{10x} - 5000x + 4499) \,,
\end{equation*}
that is obtained summing an isotropic bubble of the form 
\begin{equation*}
  b(x,y) = 50x(1-y)y(0.9-x)(1-x) \,
\end{equation*}
to the solution of the second
test case, cf.  Figure~\ref{fig:test3-sol}. The manufactured exact solution  exhibits a steep boundary layer in the $x$-direction close to the right side of the domain, which requires {\em anisotropic} mesh refinement to be efficiently treated and a bubble function in the left part of the domain, which asks for {\em isotropic} mesh refinement. 
%%%%%%%%%
\begin{figure}
  \centering \includegraphics[scale=0.7]{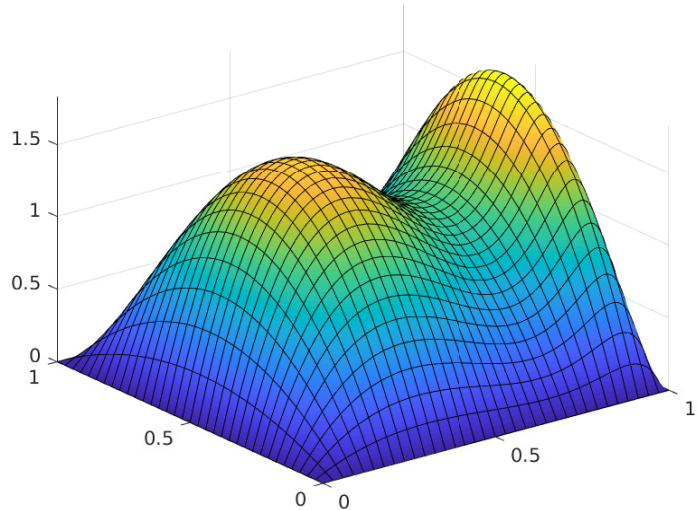}
  \caption{Test case 3. Plot of the exact solution}
  \label{fig:test3-sol}
\end{figure}
%%%%%%%%
Next, we report the color-plot of the computed solutions and
the meshes obtained at the initial step of the refinement process, and after 17 (resp. 9),  iterations based on employing VEM of order 1 (resp. 2), and using  the \emph{heuristically scaled estimator} $\sth$ defined in \eqref{eq:def_est_heur} to drive the adaptive algorithm; Figure~\ref{fig:test3-solutions} (top) shows the results obtained with VEM of order 1, whereas in Figure~\ref{fig:test3-solutions} (bottom) we show the analogous computations obtained with VEM of order 2.
%%%%
\begin{figure}
  \centering
  \begin{subfigure}[b]{.49\linewidth}
    \centering
    \includegraphics[width=\linewidth]{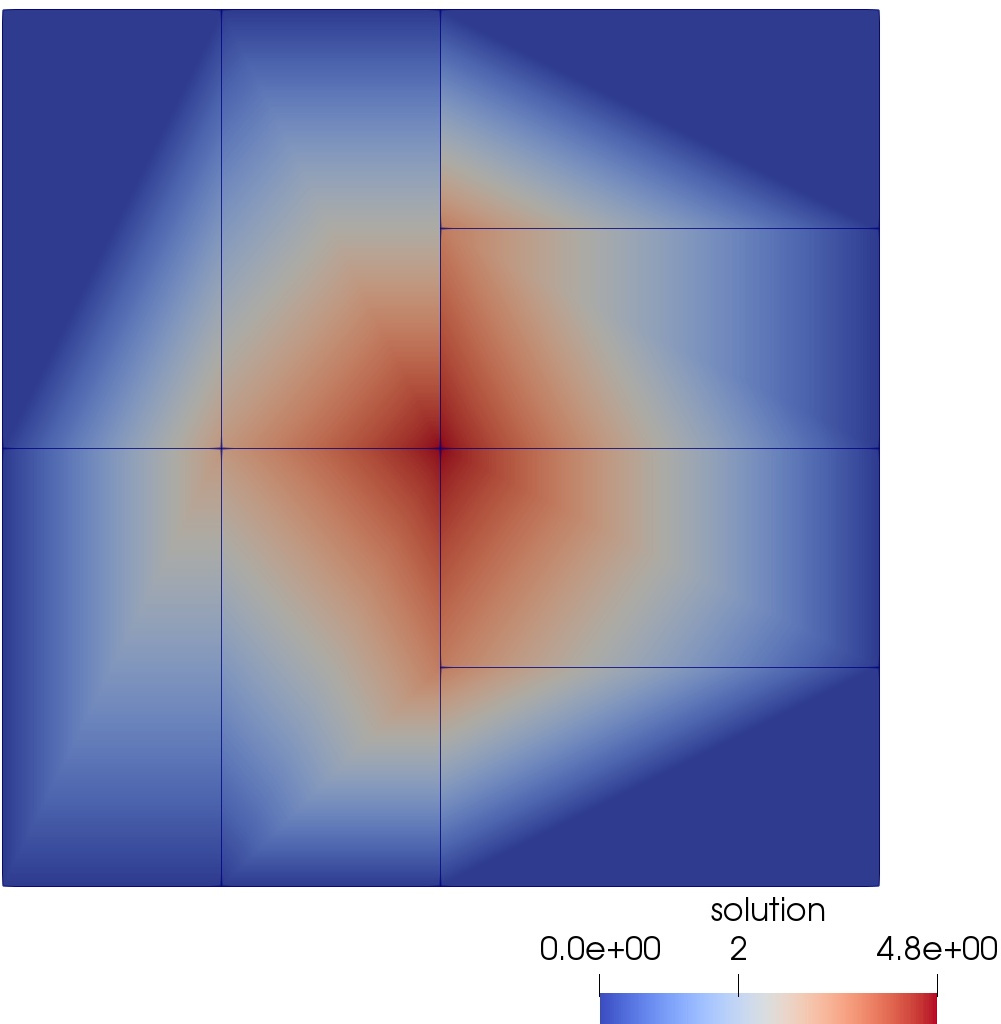}
    \caption{VEM of order 1. Adaptive step n. 1.}
  \end{subfigure}
  \begin{subfigure}[b]{.49\linewidth}
    \centering
    \includegraphics[width=\linewidth]{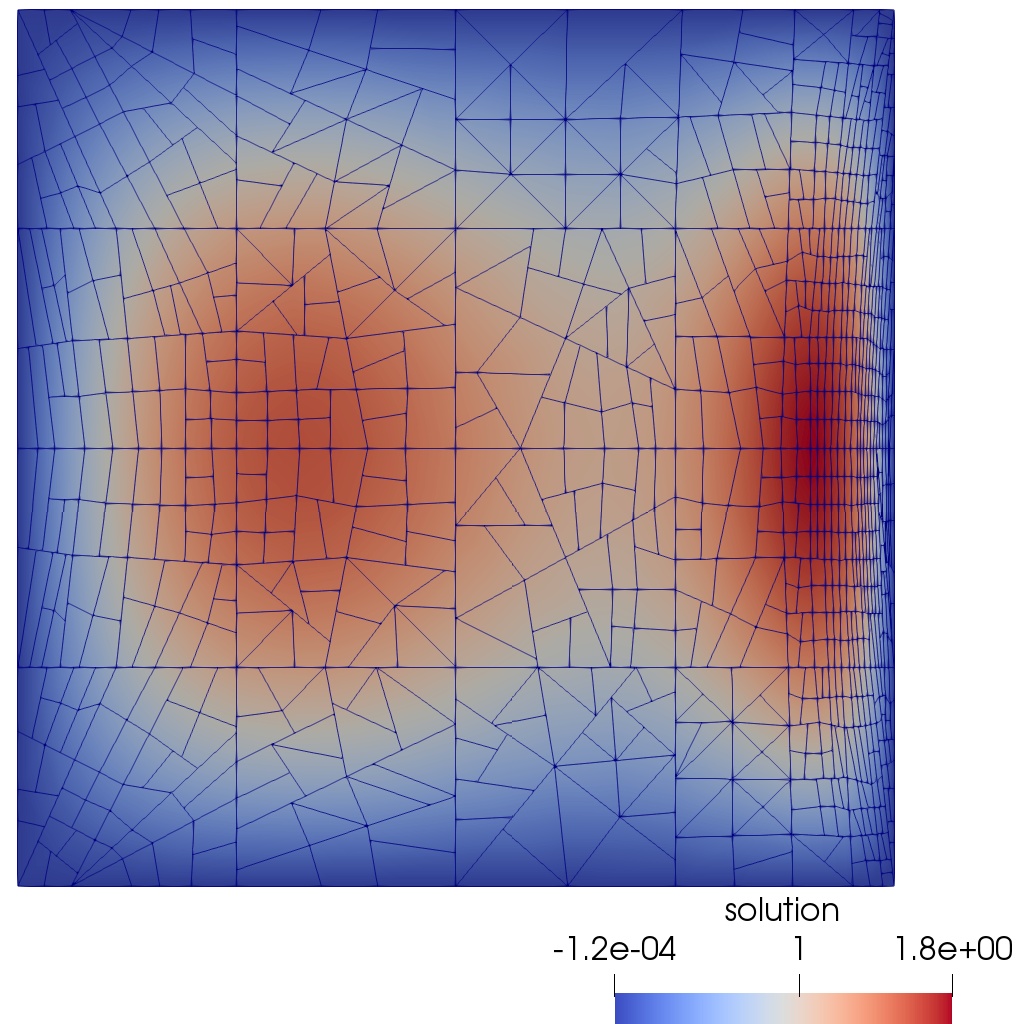}
    \caption{VEM of order 1. Adaptive step n. 17.}
  \end{subfigure}
  \begin{subfigure}[b]{.49\linewidth}
    \centering
    \includegraphics[width=\linewidth]{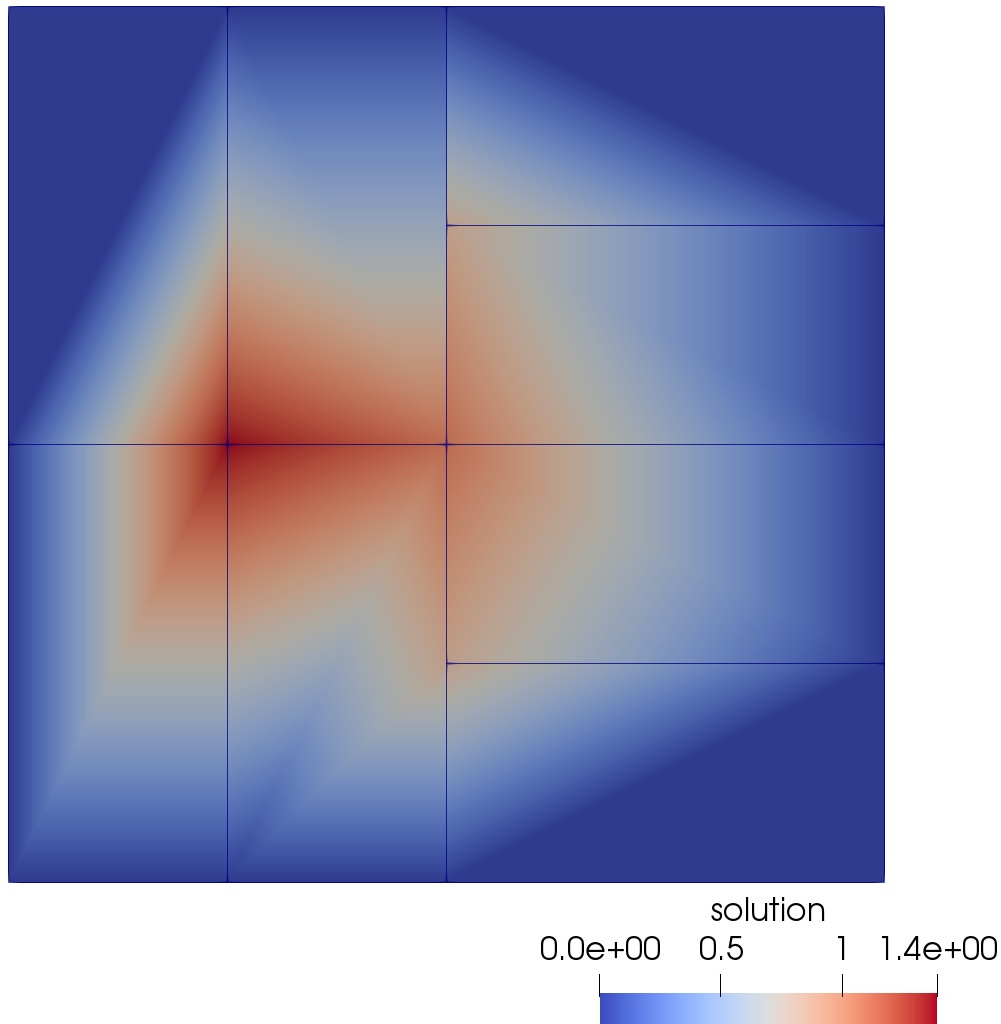}
    \caption{VEM order 2. Adaptive step n. 1.}
  \end{subfigure}
  \begin{subfigure}[b]{.49\linewidth}
    \centering
    \includegraphics[width=\linewidth]{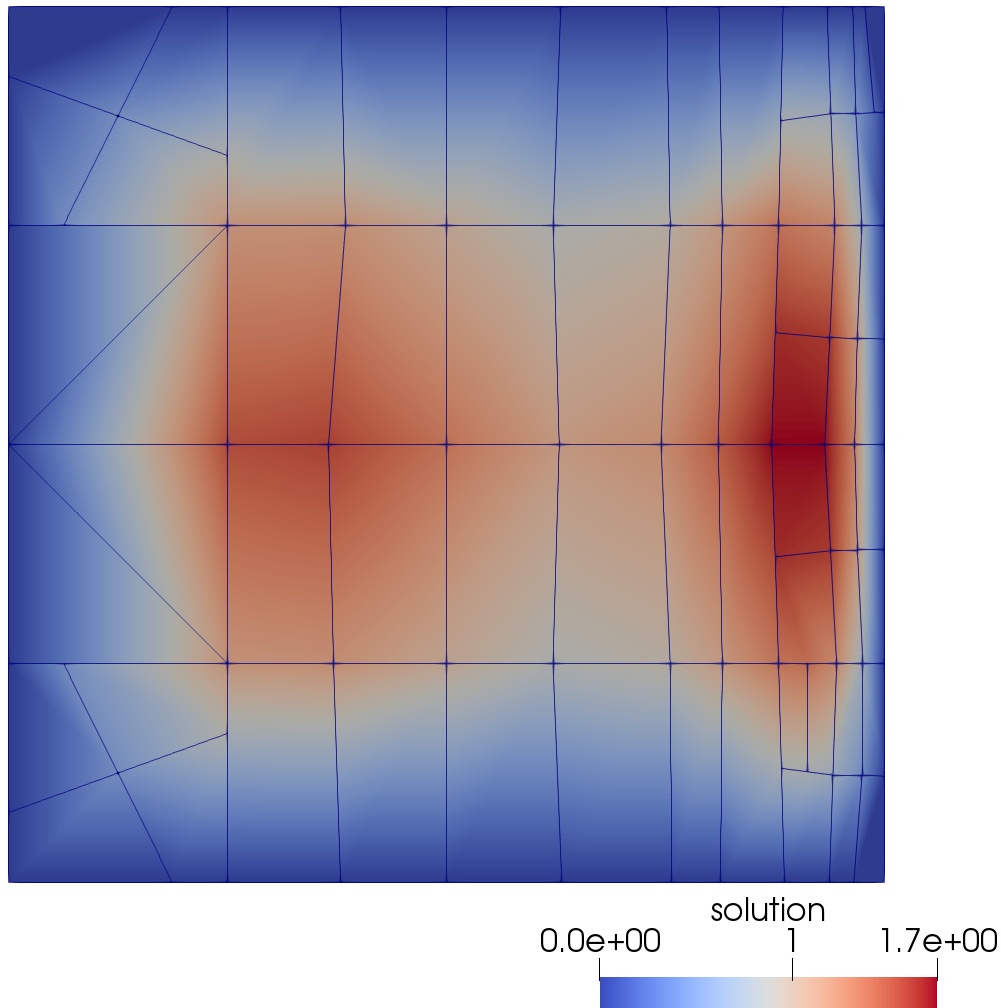}
    \caption{VEM order 2. Adaptive step n. 9.}
  \end{subfigure}
  \caption{Test case 3. Computed solutions and corresponding anisotropic grids at different steps of the adaptive algorithm based on employing the \emph{heuristically scaled estimator} $\sth$ defined in \eqref{eq:def_est_heur} to drive the adaptive algorithm.}
  \label{fig:test3-solutions}
\end{figure}
%%%%%%
A zoom of a detail of the computed solutions together with the corresponding computed anisotropic meshes at the final step  of the adaptive algorithm are reported in Figure~\ref{fig:test3-solutions-zoom}, again employing VEM of order 1 (left) and VEM of order 2 (right). The reported results show that  the combination of isotropic and anisotropic mesh refinement is correctly captured by the adaptive algorithm. 
 %%%%
\begin{figure}
  \centering
  \begin{subfigure}[b]{.32\linewidth}
    \centering
    \includegraphics[width=0.86\linewidth]{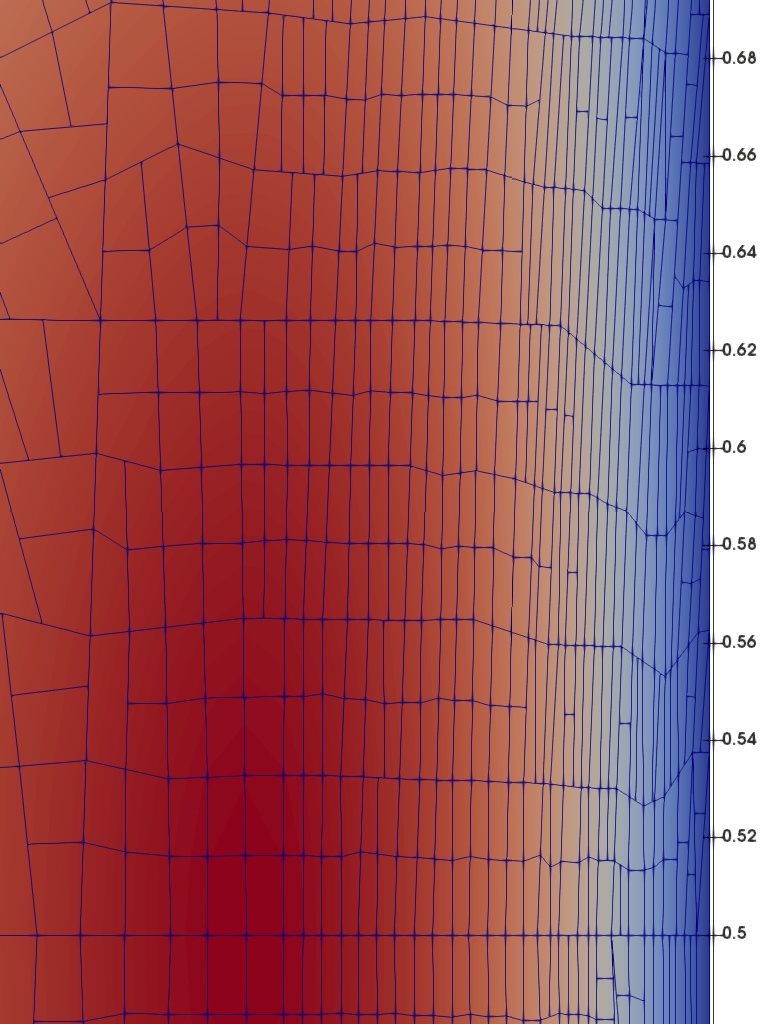}
    \caption{VEM order 1. Adaptive step n. 21.}
  \end{subfigure}
  \begin{subfigure}[b]{.32\linewidth}
    \centering
    \includegraphics[width=\linewidth]{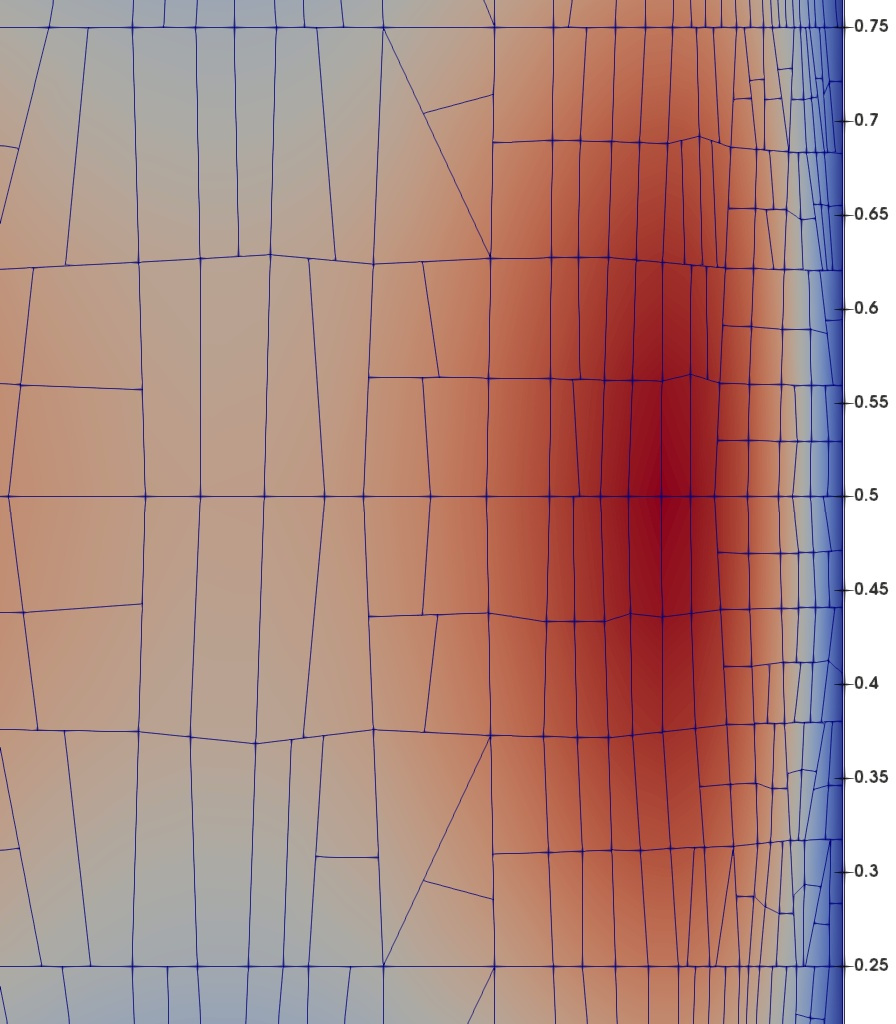}
    \caption{VEM order 2. Adaptive step n. 18}
  \end{subfigure}
  \caption{Test case 3. Zoom of the computed solutions and corresponding anisotropic grids at the final step of the adaptive algorithm based on employing the \emph{heuristically scaled estimator} $\sth$ defined in \eqref{eq:def_est_heur} to drive the adaptive algorithm.}
  \label{fig:test3-solutions-zoom}
\end{figure}
%%%%%%%%%%%%%%%%%%%
%%%%%%%%%%%%%%%%%%%
Finally, we compare the behavior of the computed estimator and of the error as a function of the number of the degrees of freedom.  In   Figure~\ref{fig:test3-analysis:totalerrors} we report the 
estimator $\sth$  and the  error $\tilde{e}$ versus the number of degrees of freedom, together with  the computed convergence
rates (obtained through a least square fitting). These results have been obtained with VEM of order 1, cf. Figure~\ref{fig:test3-analysis:order1:totalerrors} and with VEM of order 2, cf. Figure~\ref{fig:test3-analysis:order2:totalerrors}. As before, we compare these results with the analogous ones obtained with the isotropic error estimator $\eta^{\mathrm{iso}}_h$ defined in \eqref{eq:def_est_iso}. Again, as expected,  the adaptive algorithm based on employing the anisotropic estimator guarantees a lower error compared with the isotropic one.
%%%%%%%%%%%%
\begin{figure}
  \centering
  \begin{subfigure}{.49\linewidth}
    \centering \includegraphics[width =
    \linewidth]{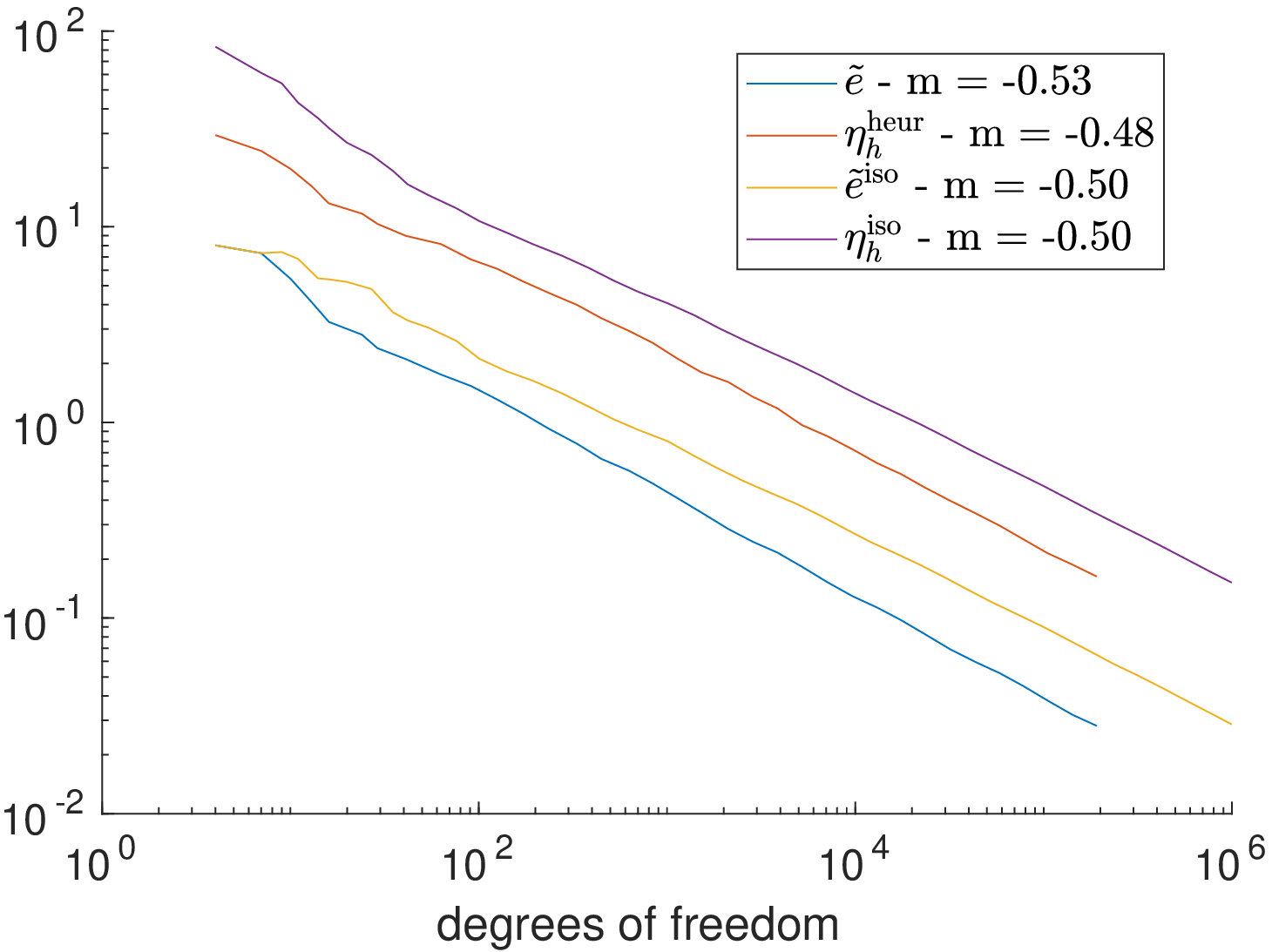}
    \caption{VEM of order 1.}
    \label{fig:test3-analysis:order1:totalerrors}
  \end{subfigure}
  \hfill
  \begin{subfigure}{.49\linewidth}
    \centering \includegraphics[width =
    \linewidth]{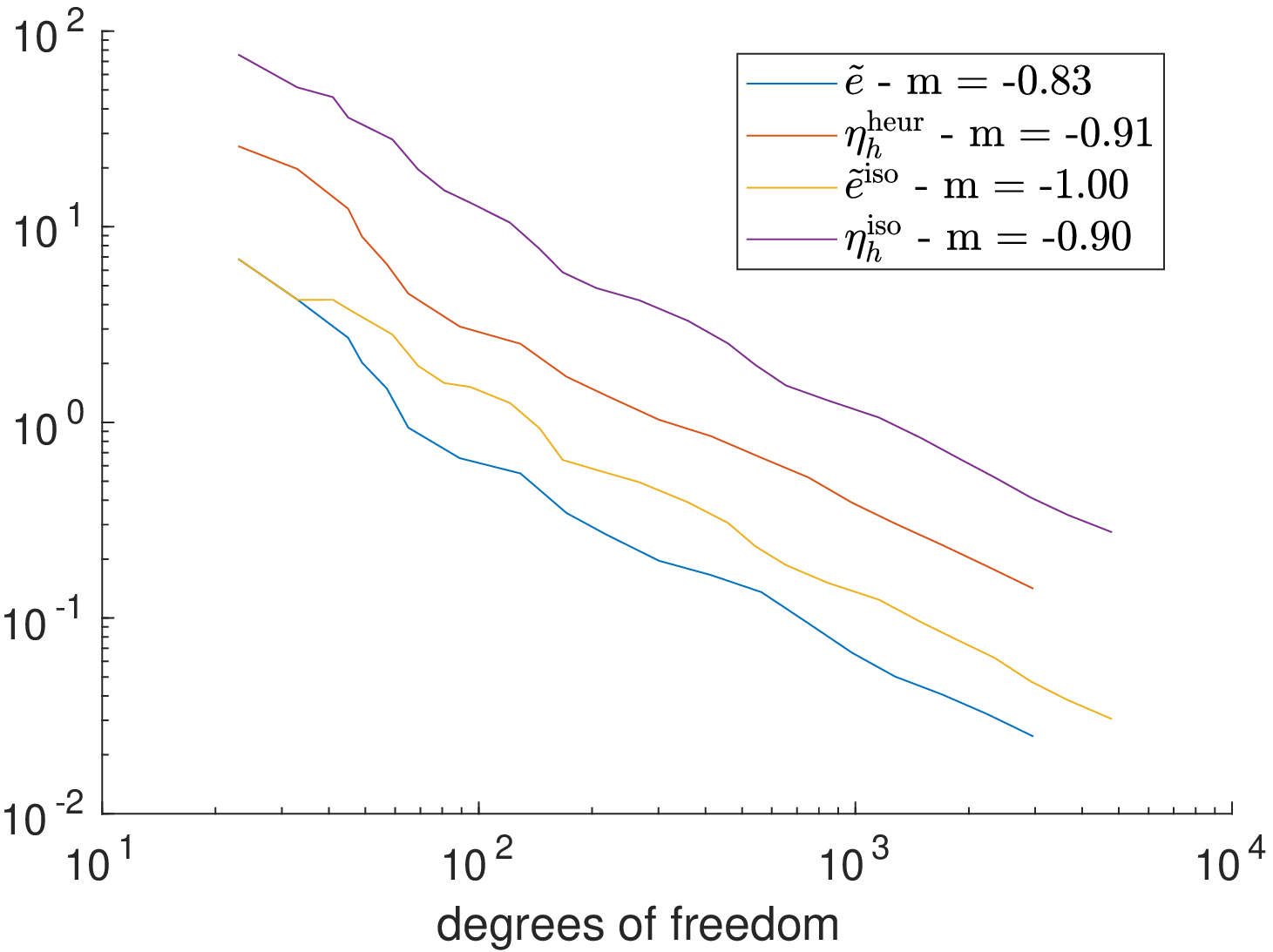}
    \caption{VEM of order  2.}
    \label{fig:test3-analysis:order2:totalerrors}
  \end{subfigure}
   \caption{Test case 3. Computed values of the estimator $\sth$, computed errors $\widetilde{e}$ based on employing the exact solution, and corresponding computed convergence rates \texttt{m} as a function of the number of degrees of freedom. The results are compared with the analogous quantities obtained based on employing the isotropic error estimator $\eta^{\mathrm{iso}}_h$ defined in \eqref{eq:def_est_iso}.}
\label{fig:test3-analysis:totalerrors}
\end{figure}
\FloatBarrier

%%%%%%%%%%%%%
%%%%%%%%%%%%%
%%%%%%%%%%%%%
%%%%%%%%%%%%%
%%%%%%%%%%%%%
%%%%%%%%%%%%%
%%%%%%%%%%%%%

\section*{Acknowledgments}
P.F.A., S.B., A.B., A.D., and M.V. acknowledge the financial support of MIUR through the PRIN grant n. 201744KLJL.
P.F.A., S.B., A.B., A.D., and M.V. have also been funded by INdAM-GNCS.
S.B., A.B., and A.D. also acknowledge the financial support of MIUR through the project ``Dipartimenti di Eccellenza 2018-2022'' (CUP E11G18000350001).

\bibliographystyle{abbrv}
\bibliography{literature}

\end{document}